 \tikzset{->-/.style={decoration={
  markings,
  mark=at position .5 with {\arrow{>}}},postaction={decorate}}}
\numberwithin{figure}{section}
\numberwithin{equation}{section}
\numberwithin{table}{section}
\newtheorem{thm}{Theorem}[section] 
\newtheorem{lem}[thm]{Lemma}
\newtheorem{cor}[thm]{Corollary}
\newtheorem{prop}[thm]{Proposition}
\newtheorem{alg}[thm]{Algorithm}
\newtheorem{conj}[thm]{Conjecture}
\theoremstyle{definition}
\newtheorem{defn}[thm]{Definition}
\newtheorem{exm}[thm]{Example}
\newcommand{\zz}{\mathbb{Z}}
\newcommand{\qq}{\mathbb{Q}}
\newcommand{\slq}{{\rm SL_2}(\qq)}
\newcommand{\slqp}{{\rm SL_2}(\qq_p)}
\newcommand{\ist}{{\rm Isom}(T)}
\title{Discrete and free groups acting on locally finite trees}
\author{Matthew J. Conder}
\address{Department of Mathematics, University of Auckland, New Zealand}
\email{matthew.conder@auckland.ac.nz}
\begin{document}

\begin{abstract}
We present an algorithm to decide whether or not a finitely generated subgroup of the isometry group of a locally finite simplicial tree is both discrete and free. The correctness of this algorithm relies on the following conjecture: every `minimal' $n$-tuple of isometries of a simplicial tree either contains an elliptic element or satisfies the hypotheses of the Ping Pong Lemma. We prove this conjecture for $n=2,3$, and show that it implies a generalisation of Ihara's Theorem.
\end{abstract}

\maketitle

\vspace{-0.5cm}

\section{Introduction}

Let $X=(g_1, \dots, g_n)$ be an $n$-tuple of isometries of a simplicial tree $T$. A theorem attributed to Weidmann (see \cite[Theorem 4]{AG}) asserts that the group generated by $X$ is either free, or contains an elliptic isometry of $T$. The proof, however, gives no practical method of distinguishing between these two cases.

In this paper, we conjecture that every $n$-tuple $X$ which is `minimal' (in the sense that certain product replacements do not decrease a particular sum of translation lengths) either contains an elliptic isometry or satisfies the hypotheses of the Ping Pong Lemma. We prove this conjecture for $n=2,3$, and report significant computational evidence that it holds for $n>3$. It implies the existence of a practical method to decide between the two outcomes of the theorem of Weidmann, and to exhibit such an elliptic isometry in the second case.

If $T$ is locally finite and its isometry group $\ist$ is equipped with the topology of pointwise convergence, then this conjecture additionally gives an algorithm to decide whether or not a finitely generated subgroup of $\ist$ is both discrete and free. This significantly generalises the algorithm given in \cite{C} for 2-generated subgroups of $\ist$. In fact, our conjecture implies the following generalisation of Ihara's Theorem \cite[Theorem 1]{Ihara}: a finitely generated subgroup of $\ist$ is both discrete and free if and only if it contains no elliptic element. It immediately follows that a discrete subgroup of $\ist$ is locally free if and only if it contains no elliptic element.

\section{Main Results}

Let $X=(g_1, \dots, g_n)$ be an $n$-tuple of isometries of a simplicial tree $T$. A \textit{product replacement} is where we replace some $g_i$ by either $g_jg_i$ or $g_ig_j^{-1}$ (for $i \neq j$). This is an example of a Nielsen transformation, and it preserves the subgroup of $\ist$ generated by $X$. Given $j \in \{1, \dots, n\}$ and subsets $S_1, S_2$ of $ \{1, \dots, n \}\backslash  \{ j \}$, we will denote by $X^{j}_{S_1,S_2}$ the $n$-tuple obtained from $X$ by performing the following product replacements:
\begin{align*}
g_i &\mapsto g_jg_i \;\;\;\;\;\;\;\;\; (\textup{if } i \in S_1); \\
g_i &\mapsto g_ig_j^{-1} \;\;\;\;\;\;\; (\textup{if } i \in S_2).
\end{align*}
Note that if $i \in S_1 \cap S_2$, then the $i$-th element of $X^{j}_{S_1,S_2}$ is $g_jg_ig_j^{-1}$.

\begin{defn}\label{sum}
Let $X=(g_1, \dots, g_n)$ be an $n$-tuple of isometries of a simplicial tree. Define the following sum of translation lengths:

$$L(X)=\sum\limits_{1 \le i \le n} l(g_i) + \sum\limits_{1 \le i < j \le n} l(g_ig_j)+l(g_ig_j^{-1}).$$
\end{defn}

\begin{defn}\label{min}
An $n$-tuple $X=(g_1, \dots, g_n)$ of isometries of a simplicial tree is \textit{minimal} if $L(X)\le L(X^j_{S_1,S_2})$ for every $j \in \{1, \dots, n\}$ and every $S_1, S_2 \subseteq \{1, \dots, n \}\backslash  \{ j \}$.
\end{defn}

\begin{conj}\label{conj}
Let $X=(g_1, \dots, g_n)$ be a $n$-tuple of isometries of a simplicial tree $T$. If $X$ is minimal, then either some $g_i$ is elliptic, or $X$ satisfies the hypotheses of the Ping Pong Lemma.
\end{conj}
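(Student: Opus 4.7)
The plan is to prove the conjecture for $n = 2, 3$ by contradiction: assume $X$ is minimal, no $g_i$ is elliptic, and the hypotheses of the Ping Pong Lemma fail; then derive a violation of one of the inequalities $L(X) \le L(X^j_{S_1,S_2})$. Since no $g_i$ is elliptic, each $g_i$ is hyperbolic with an axis $A_i \subset T$ on which it translates by $l(g_i) > 0$.

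The core computational tool is the classical trichotomy for products of hyperbolic tree isometries: for $g, h$ hyperbolic, the value of $l(gh)$ is determined by $l(g)$, $l(h)$, the intersection $A_g \cap A_h$, and the relative orientation of the two translations along that intersection. In particular, $l(gh) = l(g) + l(h) + 2\, d(A_g, A_h)$ when the axes are disjoint, an analogous formula with opposite contribution holds when they overlap (with the possibility of $gh$ becoming elliptic as a degenerate case), $l(ghg^{-1}) = l(h)$, and the axis of $ghg^{-1}$ is the translate $g(A_h)$. Using these formulas, each minimality inequality can be read as a geometric constraint on the configuration of axes.

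The simplest replacements, those with $|S_1 \cup S_2| = 1$, should yield pairwise constraints bounding the length and coherence of each overlap $A_i \cap A_j$. Assembling these, one constructs candidate attracting and repelling half-trees $U_i^+, U_i^-$ as the components of $T$ lying past suitable marked points on $A_i$, and then verifies directly that minimality forces these sets to be pairwise disjoint as required for Ping Pong. The case $n = 2$ should follow cleanly from the pairwise constraints alone, since only two axes are in play.

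The main obstacle will be the triple-axis case for $n = 3$: three hyperbolic axes may share a common branch point in a tripod configuration, in which case naively chosen pairwise half-trees can overlap near the branch vertex. Handling this requires invoking the replacements $X^j_{S_1,S_2}$ with $|S_1 \cup S_2| = 2$, whose translation lengths involve triple products such as $g_j g_i g_k^{\pm 1}$, and then exploiting the cancellations they produce in the resulting inequalities to push the marked points on each $A_i$ far enough from the branch vertex. I expect this combinatorial case analysis, together with verifying that no product along the way becomes elliptic, to be the principal technical hurdle; presumably it is the rapid explosion of such cases for $n \ge 4$ that leaves the general conjecture open.
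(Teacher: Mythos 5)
Your overall strategy is the same as the paper's: argue the contrapositive (if all $g_i$ are hyperbolic and Ping Pong fails, then $X$ is not minimal), use the translation-length trichotomy for products of hyperbolic isometries to translate each inequality $L(X)\le L(X^j_{S_1,S_2})$ into a geometric constraint on the axes, and run a case analysis on how the axes interact, using single replacements for pairwise overlaps and the simultaneous replacements for genuinely three-axis configurations. However, what you have written is a roadmap, not a proof: the case analysis you defer as ``the principal technical hurdle'' \emph{is} the proof. For $n=3$ the paper needs six separate lemmas, one for each combinatorial type of axis configuration (no intersections, one intersecting pair, two pairs, three pairs with coincident overlaps, three pairs with collinear overlaps, three pairs in a genuine tripod), and in each case it must identify a specific replacement and carry out a delicate estimate using the location of the axis (or fixed set) of products such as $g_2^{-1}g_3$ and $g_2^{-1}g_1$. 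Nothing in your outline selects the right replacement in each configuration or establishes the required strict inequality, so the argument is not yet there.

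One concrete point where your plan would fail as stated: you propose ``verifying that no product along the way becomes elliptic.'' This cannot be verified, because it is false in the boundary cases that are exactly the hard ones. When $\Delta(\gamma_i,\gamma_j)=\min\{l(g_i),l(g_j)\}$, products like $g_2^{-1}g_3$ or $g_2^{-1}g_1$ can be elliptic, and the paper does not exclude this: it instead locates the fixed vertex of the elliptic product (its Corollary on fixed points of elliptic products, together with a cited result bounding $l(gh)$ in terms of the distance from the fixed set of $g$ to the axis of $h$) and bounds $d(p,wp)$ for a well-chosen vertex $p$ to get the needed decrease in $L$. Without that mechanism, your argument has no way to handle the equality cases $\Delta=\min\{l(g_i),l(g_j)\}$, which cannot be swept aside. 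A smaller discrepancy: your half-tree formulation of Ping Pong differs from the projection-based criterion the paper uses ($\bigcup_{j\ne i}{\rm Proj}_{\gamma_i}(\gamma_j)$ contained in an open fundamental segment of $\gamma_i$); this is workable but you would need to prove the equivalence or redo the bookkeeping, since the failure of Ping Pong is what seeds each case of the analysis.
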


Many different statements of the Ping Pong Lemma appear in the literature, but the version we use here is stated in \Cref{background}.

\begin{thm}\label{thm:conj}
Conjecture $\ref{conj}$ holds for $n=2,3$.
\end{thm}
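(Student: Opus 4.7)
The plan is to reduce to the case where every $g_i$ is hyperbolic, convert the minimality inequalities into geometric constraints on the axes $A_i := A(g_i)$, and then directly exhibit ping pong sets in $T$. If some $g_i$ is elliptic the conclusion is immediate, so assume each $g_i$ is hyperbolic with axis $A_i$ and translation length $l_i := l(g_i)$. Using conjugation invariance and the cyclic property of $l$, each inequality $L(X) \le L(X^j_{S_1,S_2})$ simplifies into a comparison of translation lengths of short words in $g_1, \dots, g_n$. For example, the replacement $g_1 \mapsto g_2 g_1$ reduces after cancellation to $l(g_1 g_2^{-1}) \le l(g_1 g_2^2)$, and $g_1 \mapsto g_1 g_2^{-1}$ to $l(g_1 g_2) \le l(g_1 g_2^{-2})$. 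Standard tree geometry expresses each such translation length in terms of the $l_i$, the lengths of the pairwise overlaps $A_i \cap A_j$, and the relative orientations of the axes along these overlaps, so each algebraic inequality becomes a geometric constraint on the axis configuration.

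For $n=2$, I would split according to whether $A_1 \cap A_2$ is empty, a single point, or a segment of positive length $b$. In the first two cases, the half-trees across the bridge between $A_1$ and $A_2$ give ping pong sets almost immediately. In the positive-overlap case, the two inequalities above (together with their symmetric counterparts obtained by swapping the roles of $g_1, g_2$ and/or replacing them by inverses) force $b$ to be strictly smaller than $\min(l_1, l_2)$ and align the orientations of $g_1$ and $g_2$ along the overlap; the half-trees obtained by truncating $A_1 \cup A_2$ at appropriate points then form disjoint ping pong regions.

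For $n=3$ the strategy is the same, but with a considerably expanded case analysis organised by the configuration of $A_1, A_2, A_3$. By Helly's property for subtrees, either some pair of axes is disjoint, or all three axes share a common point or segment. The $n=2$ analysis already controls each intersecting pair. To handle the interactions among all three axes, I would invoke the compound replacements $X^j_{S_1,S_2}$ with $|S_1 \cup S_2| = 2$, which simultaneously modify both of the other generators using $g_j$; these yield additional inequalities relating $l(g_i g_k)$, $l(g_j g_k)$ and $l(g_j g_i g_j^{\pm 1} g_k)$ that let one bound the triple overlap and align the orientations of all three axes, so that the analogous half-tree construction produces three pairwise disjoint ping pong regions.

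The main obstacle is the subcase of $n=3$ in which all three axes share a common segment of positive length. Here the translation lengths of all the relevant products depend on several overlap parameters at once, so the individual minimality inequalities are weaker; extracting the ping pong hypotheses requires combining many of them carefully together with the compound replacements. The borderline situation, in which the common segment has length close to $\tfrac{1}{2} \min_i l_i$, is where the case analysis is tightest and where one has to use the full strength of \Cref{min} rather than just its simplest instances.
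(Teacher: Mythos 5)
Your overall strategy is the same as the paper's, just phrased contrapositively: the paper shows that a tuple failing the Ping Pong hypotheses admits a product replacement strictly decreasing $L$, organised by a case analysis on how the axes meet, using the translation-length formulas for products (\Cref{overlap}) and, crucially for $n=3$, the compound replacements $X^j_{S_1,S_2}$ with $|S_1\cup S_2|=2$. You correctly identify all of these ingredients, including the need for compound replacements. However, what you have written is a plan rather than a proof: the entire content of the theorem is the execution of the case analysis, and none of it is carried out. Two of the guiding claims in the sketch are also off. First, minimality does not ``align the orientations'' of the axes for $n=2$, and it does not need to: the Ping Pong hypotheses for a pair require only $\Delta(\gamma_1,\gamma_2)<\min\{l(g_1),l(g_2)\}$, irrespective of orientation. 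Second, ``the $n=2$ analysis already controls each intersecting pair'' understates what remains for $n=3$: pairwise Ping Pong does not imply Ping Pong for the triple (see \Cref{notsum}, where each pair is fine but ${\rm Proj}_{\gamma_3}(\gamma_1)\cup{\rm Proj}_{\gamma_3}(\gamma_2)$ is too long), and this failure can occur even when some axes are pairwise disjoint, which your Helly trichotomy files under the ``easy'' branch.

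The concrete gap is in the borderline configurations, which you locate in the wrong place. The genuinely delicate situation is not a common segment of length near $\tfrac{1}{2}\min_i l(g_i)$, but the boundary case $\Delta(\gamma_i,\gamma_j)=\min\{l(g_i),l(g_j)\}$ of \Cref{overlap}$(3)(iii)$, where a product such as $g_i^{-1}g_j$ may be \emph{elliptic}. There the dictionary you rely on --- expressing every $l(\cdot)$ of a short word in terms of translation lengths, overlaps and orientations --- breaks down: the length of a triple product like $l(g_2^{-1}g_3g_2^{-1}g_1)$ can no longer be bounded by composing axis-overlap formulas, and one must instead locate the fixed vertex of the elliptic factor and bound displacements $d(p,hp)$ of specific vertices (this is what \Cref{ellfix} and the repeated appeals to \cite[Proposition 1.7]{P} accomplish in Lemmas \ref{3twopairs}--\ref{3threepairs3}). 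Without an argument covering these elliptic subcases, the claimed inequalities $L(X^j_{S_1,S_2})<L(X)$ cannot be verified in precisely the configurations where minimality is hardest to exploit, so the proof is incomplete as it stands.
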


The proof of \Cref{conj} for $n=3$ relies on a case-by-case analysis of the possible interactions between the axes of three hyperbolic elements. The main obstacle to obtaining a full proof of \Cref{conj} is the vast increase in the number of possible interactions between the axes as $n$ increases. However, as discussed in \Cref{sec:conj}, there is significant computational evidence that \Cref{conj} holds.

When equipped with the topology of pointwise convergence, the isometry group $\ist$ has the structure of a topological group \cite[Section 5.B]{CH}. A subgroup of $\ist$ is \textit{discrete} if the corresponding topology is the discrete topology.

\begin{thm}\label{thm:alg}
Let $T$ be a locally finite simplicial tree. If Conjecture $\ref{conj}$ holds, then there is an algorithm to decide whether or not a finitely generated subgroup of $\ist$ is both discrete and free.
\end{thm}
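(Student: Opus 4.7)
The plan is to iteratively apply product replacements to the input tuple to reduce $L$, until a minimal tuple is reached, and then read off the verdict using \Cref{conj}. Since translation lengths on a simplicial tree are non-negative integers, so is $L(X)$. Given a generating tuple $X = (g_1, \dots, g_n)$ for $G \le \ist$, the algorithm enumerates the (at most $n \cdot 4^{n-1}$) product replacements $X \mapsto X^{j}_{S_1, S_2}$ and computes $L$ for each; the required compositions, equality tests, and translation length computations all reduce to examining the action on a sufficiently large ball around a chosen basepoint, so these are finite computations. If some replacement strictly decreases $L$, we update $X$ and repeat; otherwise $X$ is minimal in the sense of \Cref{min}. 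Termination is immediate, since $L(X)$ is a non-negative integer that strictly decreases at each step. Product replacements preserve the generated subgroup, so the resulting tuple still generates $G$.

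Once $X$ is minimal, the algorithm tests whether any $g_i$ is elliptic (equivalently, whether its translation length is zero). By \Cref{conj}, one of two cases holds. In the first case, some $g_i$ is elliptic; if $g_i$ is the identity, remove it and restart the entire procedure with the resulting $(n-1)$-tuple (this can happen at most $n$ times). Otherwise $G$ contains a non-trivial elliptic element. Since $T$ is locally finite, the stabilizer in $\ist$ of the vertex fixed by $g_i$ (or of the midpoint inverted by $g_i$) is compact, so its intersection with any discrete subgroup of $\ist$ is finite. Thus, if $G$ were discrete, $g_i$ would have finite order, contradicting torsion-freeness of a free group. The algorithm therefore returns that $G$ is \emph{not} both discrete and free.

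In the second case, $X$ satisfies the hypotheses of the Ping Pong Lemma, so $G$ is free on $X$. Moreover, the Ping Pong conclusion implies that every non-trivial reduced word in $X$ acts hyperbolically on $T$, so $G$ contains no elliptic elements. Hence every vertex stabilizer in $G$ is trivial, which forces $G$ to be discrete. The algorithm returns that $G$ \emph{is} both discrete and free.

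The main obstacle, granting \Cref{conj}, is verifying correctness of the two outputs. Case~1 requires the observation that a discrete subgroup of the isometry group of a locally finite tree cannot contain an elliptic element of infinite order, while Case~2 requires extracting discreteness from the Ping Pong conclusion via the absence of elliptic elements; both arguments rest on the compactness of vertex stabilizers in $\ist$ when $T$ is locally finite. The secondary issue is organising the outer loop so that removal of identity generators in Case~1 cannot cause non-termination, which is handled by the remark above that $n$ strictly decreases with each restart.
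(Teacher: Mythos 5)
Your proposal is correct and follows essentially the same route as the paper's \Cref{alg} and its proof: repeatedly apply $L$-decreasing product replacements until the tuple is minimal (termination via the strictly decreasing non-negative integer $L$), then invoke \Cref{conj} to conclude either that an elliptic element has been found --- ruling out discrete-and-free via compactness of vertex stabilisers, which is exactly the content of \Cref{elltree} cited from \cite{C} --- or that the tuple satisfies the ping pong hypotheses, giving freeness and discreteness as in \Cref{PPL} (whose discreteness clause, cited from \cite{CS}, you re-derive via the absence of non-trivial elliptic elements and openness of vertex stabilisers). The one place you go beyond the paper is the explicit treatment of a generator that becomes the identity: the paper's algorithm returns \texttt{false} whenever some $l(g_i)=0$, which correctly decides ``discrete and free of rank $n$'' but not literally ``discrete and free'' (e.g.\ a redundant generator can be reduced to the identity even when the group is discrete and free of smaller rank), so your restart step with the $(n-1)$-tuple is a sensible refinement that the stated theorem arguably requires.
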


We present such an algorithm in \Cref{sec:alg} and discuss its implementation in {\sc Magma} \cite{magma} in \Cref{algimp}. As a consequence of this algorithm, we obtain the following generalisation of Ihara's Theorem \cite[Theorem 1]{Ihara}:

\begin{thm}\label{ihara}
Let $T$ be a locally finite simplicial tree. If Conjecture $\ref{conj}$ holds, then a finitely generated subgroup of $\ist$ is both discrete and free if and only if it contains no elliptic element.
\end{thm}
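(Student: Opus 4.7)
The plan is to prove the biconditional by treating each direction separately, with Conjecture \ref{conj} entering only in the reverse direction via the algorithm of Theorem \ref{thm:alg}.

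For the forward implication, suppose $\Gamma \le \ist$ is discrete and free. Since $T$ is locally finite, the stabilizer in $\ist$ of any vertex $v$ is compact in the pointwise convergence topology, being realised as the inverse limit over $k$ of the finite permutation groups induced on the ball of radius $k$ about $v$. The intersection $\Gamma \cap \text{Stab}(v)$ is therefore a discrete subset of a compact group, hence finite. Every elliptic element of $\Gamma$ fixes some vertex and so lies in such a finite subgroup, making it a torsion element; since $\Gamma$ is free it is torsion-free, and so contains no non-identity elliptic element. This direction does not actually require Conjecture \ref{conj}.

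For the reverse implication, let $\Gamma = \langle g_1, \ldots, g_n \rangle$ be a finitely generated subgroup of $\ist$ with no elliptic element, and set $X = (g_1, \ldots, g_n)$. The plan is to perform product replacements that strictly decrease $L(X)$ whenever possible; this procedure terminates because $L$ takes values in $\zz_{\ge 0}$, producing a minimal tuple $X'$ that still generates $\Gamma$. Since $\Gamma$ contains no elliptic element, in particular no entry of $X'$ is elliptic, so by Conjecture \ref{conj} the tuple $X'$ satisfies the hypotheses of the Ping Pong Lemma, and the standard conclusion then yields that $\Gamma$ is free. For discreteness, observe that the sets $\{g \in \ist : g(v) = v\}$ form a sub-basis of open neighborhoods of the identity, so if $\Gamma$ were non-discrete there would exist a non-identity element of $\Gamma$ fixing some vertex $v$, an elliptic element contradicting the hypothesis.

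The main obstacle I anticipate lies not in the proof itself but in the fact that Conjecture \ref{conj} is currently known only for $n \le 3$, which is precisely why Theorem \ref{ihara} must be stated conditionally. Within the argument above, the only subtlety is ensuring that the reduction to a minimal tuple preserves the assumption of having no elliptic element, which is automatic since product replacements do not change the subgroup generated.
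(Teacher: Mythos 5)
Your proposal is correct and follows essentially the same route as the paper: the reverse direction is precisely the correctness argument for \Cref{alg} (reduce $L$ until minimal, then apply \Cref{conj} and the Ping Pong Lemma), while the forward direction is \Cref{elltree}, which you prove directly via compactness of vertex stabilisers rather than citing \cite[Proposition 5.1]{C}. The only cosmetic difference is that you establish discreteness separately from the observation that a non-discrete subgroup must contain nontrivial vertex-fixing elements, whereas the paper reads it off from the conclusion of \Cref{PPL}; both are fine.
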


A group is \textit{locally free} if every finitely generated subgroup is free. Thus we immediately deduce the following from \Cref{ihara}:

\begin{cor}
Let $T$ be a locally finite simplicial tree. If Conjecture $\ref{conj}$ holds, then a discrete subgroup of $\ist$ is locally free if and only if it contains no elliptic element.
\end{cor}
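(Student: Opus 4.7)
The plan is to deduce both directions directly from Theorem \ref{ihara} by passing to finitely generated subgroups. This is essentially automatic, since local freeness is by definition a property quantified over all finitely generated subgroups, and Theorem \ref{ihara} supplies exactly the required criterion for such subgroups of $\ist$.

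For the direction ($\Rightarrow$), I would argue by contradiction. Suppose $G \le \ist$ is discrete and locally free, yet contains some nontrivial elliptic element $g$. Then $\langle g \rangle$ is a finitely generated subgroup of $G$, hence discrete (as a subgroup of the discrete group $G$) and free (by local freeness of $G$). But $\langle g \rangle$ contains the elliptic element $g$, contradicting Theorem \ref{ihara}, which forbids any finitely generated subgroup of $\ist$ from being simultaneously discrete, free, and containing an elliptic element. For the direction ($\Leftarrow$), suppose $G$ is discrete and contains no elliptic element, and let $H \le G$ be an arbitrary finitely generated subgroup. Then $H$ is itself a finitely generated subgroup of $\ist$, inheriting from $G$ the absence of elliptic elements, so by Theorem \ref{ihara} it is both discrete and free. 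As $H$ was arbitrary, $G$ is locally free.

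There is no real obstacle: the corollary is a direct two-line consequence of Theorem \ref{ihara}, and the paper itself flags it as immediate. The only point worth checking is that ``contains no elliptic element'' should be read as ``contains no nontrivial elliptic element'' (so that $\langle g \rangle$ in the forward direction genuinely contradicts freeness), but this is the tacit convention already in force in the preceding statements.
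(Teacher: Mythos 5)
Your argument is correct and is exactly the intended deduction: the paper gives no explicit proof, stating the corollary as an immediate consequence of Theorem \ref{ihara} once one unwinds the definition of locally free, which is precisely what you do (including the standard fact that a subgroup of a discrete subgroup is discrete). Your remark that ``elliptic element'' must tacitly mean \emph{nontrivial} elliptic element is a fair observation and consistent with the paper's conventions.
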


\section{Background}\label{background}

Let $T$ be a simplicial tree. We will assume (by subdividing edges, if necessary) that every isometry $g$ of $T$ acts without inversions and hence can be classified based on its {\it translation length}. This is the integer
$$l(g)=\min_{x\in V(T)}d(x,gx),$$
where $V(T)$ denotes the vertex set of $T$, and $d$ is the standard path metric on $T$. Note that $l(g)=l(g^{-1})$ and $l(hgh^{-1})=l(g)$ for all such isometries $g, h$ of $T$. If $l(g)=0$, then $g$ is \textit{elliptic} and it fixes a vertex of $T$. If $l(g)>0$, then $g$ is \textit{hyperbolic} and there is an infinite path in $T$ (called the \textit{axis} of $g$) upon which $g$ acts by translations of length $l(g)$. For further background information, see \cite{Serre}.

Suppose $g_i, g_j$ are hyperbolic isometries of $T$ with respective axes $\gamma_i, \gamma_j$. As in \cite{AG}, we define the \textit{projection of $\gamma_j$ onto $\gamma_i$} to be
$${\rm Proj}_{\gamma_i}(\gamma_j)=\{x \in \gamma_i : d(x,\gamma_j)=d(\gamma_i,\gamma_j)\}.$$

Note that ${\rm Proj}_{\gamma_i}(\gamma_j)$ is either the unique vertex of $\gamma_i$ that is closest to $\gamma_j$ (if $\gamma_i \cap \gamma_j = \varnothing$), or the path $\gamma_i \cap \gamma_j$ (if $\gamma_i\cap \gamma_j \neq \varnothing$).

We present a version of the Ping Pong Lemma specifically for isometries of a simplicial tree. It is essentially a reformulation of \cite[Proposition 1.6]{L} using the notation of \cite{AG}.

\begin{lem}[The Ping Pong Lemma]\label{PPL}
Let $X=(g_1, \dots, g_n)$ be an $n$-tuple of hyperbolic isometries of a simplicial tree with axes $\gamma_1, \dots, \gamma_n$. Suppose that 
for each $1 \le i \le n$ there is an open segment $P_i \subseteq \gamma_i$ of length $l(g_i)$ such that
$$\bigcup_{i \neq j}{\rm Proj}_{\gamma_i}(\gamma_j) \subseteq P_i.$$
Then the group $G$ generated by $X$ is free of rank $n$. If $T$ is locally finite, then $G$ is also discrete with respect to the topology of pointwise convergence on $\ist$.
\end{lem}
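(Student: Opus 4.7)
The plan is to execute a tree-geometric ping-pong argument, using each open segment $P_i$ to cut $T$ into disjoint ping-pong tables. After subdividing edges of $T$ if necessary, I will take the endpoints $a_i,b_i$ of $\overline{P_i}$ to be midpoints of edges of $\gamma_i$, oriented so that $g_i(a_i)=b_i$. Removing $\{a_i,b_i\}$ from the topological realisation of $T$ then splits it into three open subtrees: a central strip $M_i$ containing $P_i$, and two half-trees $H_i^+$ and $H_i^-$ containing the positive and negative rays of $\gamma_i$ beyond $\overline{P_i}$ together with everything hanging off them.

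The first substantive step is to translate the projection hypothesis into a clean containment statement. Writing $q_i\colon T\to\gamma_i$ for the nearest-point retraction, a short tree-geometry argument shows that ${\rm Proj}_{\gamma_i}(\gamma_j)$ coincides with $q_i(\gamma_j)$ (both equal the bridge point when $\gamma_i\cap\gamma_j=\varnothing$ and $\gamma_i\cap\gamma_j$ otherwise), so the hypothesis yields $\gamma_j\subseteq M_i$ for every $j\ne i$. I expect the trickiest bookkeeping to be the strengthening $H_j^\pm\subseteq M_i$: each $H_j^\pm$ is a connected subtree that already meets $M_i$ along a ray of $\gamma_j$, and it cannot contain the cut points $a_i,b_i$ because those lie on $\gamma_i\subseteq M_j$; connectedness then confines $H_j^\pm$ entirely to $M_i$. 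Consequently the $2n$ half-trees $\{H_i^+,H_i^-\}_{i=1}^n$ are pairwise disjoint, and any vertex of $\overline{P_1}$ lies in $\bigcap_i M_i$.

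Once this geometry is in place, the ping-pong dynamics are routine. For $v\in M_i$ we have $q_i(v)\in(a_i,b_i)$, so for $k\ge 1$ the shifted projection $q_i(g_i^kv)=q_i(v)+kl(g_i)$ lies strictly beyond $b_i$, hence $g_i^kv\in H_i^+$; and $g_i^k(H_i^+)\subseteq H_i^+$ is immediate. The symmetric statement holds for $k\le-1$. Feeding this into the standard ping-pong induction, applied from the innermost letter outward to any nontrivial reduced word $w=g_{i_1}^{k_1}\cdots g_{i_m}^{k_m}$ with basepoint $v_0\in\overline{P_1}$, one obtains $wv_0\in H_{i_1}^{\mathrm{sign}(k_1)}$, which is disjoint from $v_0$. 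Hence $w\ne e$ and $G$ is free of rank $n$.

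The discreteness claim when $T$ is locally finite is then essentially a corollary: the argument above actually shows $G\cap\mathrm{Stab}(v_0)=\{e\}$, and $\mathrm{Stab}(v_0)$ is a basic open neighbourhood of the identity in $\ist$ under the pointwise convergence topology, so $\{e\}$ is open in the subspace topology on $G$. The main obstacle I foresee throughout is the half-tree containment $H_j^\pm\subseteq M_i$, which is where the open nature of the segments $P_i$ and the interaction between two separate cut decompositions must be carefully reconciled; every other step is standard once that geometric fact is established.
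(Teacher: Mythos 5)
Your argument is correct, but it is worth noting that the paper does not actually prove this lemma from scratch: its entire proof is the citation ``This follows from [CS, Lemma 2.1]'', so your self-contained table-tennis argument is a genuinely different route. The key geometric content you supply --- cutting $T$ at midpoints $a_i=g_i^{-1}(b_i)$ of edges chosen so that the open arc $(a_i,b_i)$ of length $l(g_i)$ still swallows $\bigcup_{j\ne i}{\rm Proj}_{\gamma_i}(\gamma_j)$, identifying membership in $M_i$, $H_i^+$, $H_i^-$ with the position of the nearest-point projection $q_i(x)$ relative to $(a_i,b_i)$, and then promoting $\gamma_j\subseteq M_i$ to $H_j^{\pm}\subseteq M_i$ via connectedness and the fact that the cut points $a_i,b_i$ lie on $\gamma_i\subseteq M_j$ --- is exactly the step you flagged as delicate, and your resolution of it is sound: each $H_j^{\pm}$ is a connected subset of $T\setminus\{a_i,b_i\}$ meeting $M_i$ along a ray of $\gamma_j$, hence lies entirely in $M_i$. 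From there the dynamics $g_i^k(M_i\cup H_i^{{\rm sign}(k)})\subseteq H_i^{{\rm sign}(k)}$ and the innermost-letter-outward induction give freeness, and the observation that every nontrivial reduced word moves a vertex $v_0\in\bigcap_i M_i$ yields $G\cap{\rm Stab}(v_0)=\{e\}$, whence discreteness since vertex stabilisers are open in the pointwise-convergence topology. What your approach buys is transparency and a slightly stronger conclusion (trivial intersection with a vertex stabiliser, which gives discreteness without really invoking local finiteness); what the paper's citation buys is brevity and consistency with the notation of \cite{AG} used throughout its case analysis. One small point of care: when you relocate the endpoints of $P_i$ to edge-midpoints you should say explicitly that the union of projections, being a closed subcomplex inside the open segment of length $l(g_i)$, lies within the closed subsegment of length $l(g_i)-2$ obtained by trimming half an edge plus an edge from each end, so a translate $(a_i,a_i+l(g_i))$ with $a_i$ a half-integer point can indeed be chosen to contain it; this is routine but is where the openness of $P_i$ is genuinely used.
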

\begin{proof}
This follows from \cite[Lemma 2.1]{CS}.
\end{proof}

\Cref{PPL} implies that two hyperbolic isometries of $T$ generate a free group if their axes either do not intersect or intersect along a common subpath of length strictly less than both their translation lengths. As noted in \cite{C,P}, the interaction between the axes of two hyperbolic isometries of a simplicial tree can be deduced from the translation length of their product:

\begin{prop}\label{overlap}
Let $g_1, g_2$ be hyperbolic isometries of a simplicial tree, with respective axes $\gamma_1, \gamma_2$. Precisely one of the following holds:
\begin{enumerate}[label={$(\arabic*)$}]
\item $\gamma_1$ and $\gamma_2$ do not intersect and
$$l(g_1g_2)=l(g_1)+l(g_2)+2d(\gamma_1,\gamma_2).$$
\item $\gamma_1$ and $\gamma_2$ intersect with the same orientation and
$$l(g_1g_2)=l(g_1)+l(g_2).$$
\item $\gamma_1$ and $\gamma_2$ intersect with opposite orientations along a (possibly infinite) path of length $\Delta=\Delta(\gamma_1,\gamma_2)\ge 0$ and one of the following holds:
\begin{enumerate}[label={$(\roman*)$}]
\item $\Delta<\min\{l(g_1),l(g_2)\}$ and $l(g_1g_2)=l(g_1)+l(g_2)-2\Delta$;
\item $\Delta>\min\{l(g_1),l(g_2)\}$ and $l(g_1g_2)=|l(g_1)-l(g_2)|$;
\item $\Delta=\min\{l(g_1),l(g_2)\}$, either $\gamma_2$ and $g_1\cdot \gamma_2$ (if $l(g_1) \le l(g_2)$) or $\gamma_1$ and $g_2\cdot \gamma_1$ (if $l(g_1) > l(g_2)$) intersect along a (possibly infinite) path of length $\Delta' \ge 0$, and \\
$$\hspace{2cm} l(g_1g_2)=\left\{
\begin{array}{ll}
|l(g_1)-l(g_2)|-2\Delta' & \textup{ if } \Delta' < \frac{|l(g_1)-l(g_2)|}{2} \\
0 & \textup{ otherwise. }\\
\end{array} 
\right.$$
\end{enumerate}
\end{enumerate}
\end{prop}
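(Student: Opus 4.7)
My plan is a case analysis on the relative configuration of $\gamma_1$ and $\gamma_2$, selecting a convenient basepoint $x$ in each case and computing $d(x, g_1 g_2 x)$ directly. The three listed cases are exhaustive and mutually exclusive: in a tree, two bi-infinite geodesic paths either fail to meet, meet in a single vertex, or share a common subpath, and in the latter case their induced orientations either agree (case (2)) or disagree (case (3) with $\Delta > 0$); a single-vertex intersection falls into case (3) with $\Delta = 0$. The basic tool throughout is the decomposition
\[
d(x, g_1 g_2 x) = d(x, g_2 x) + d(g_2 x, g_1 g_2 x) - 2\ell,
\]
where $\ell$ is the length of the shared initial segment of the geodesics $[g_2 x, x]$ and $[g_2 x, g_1 g_2 x]$ at $g_2 x$; the two individual distances are read off from the standard identity $d(y, h y) = l(h) + 2 d(y, \mathrm{Axis}(h))$ for hyperbolic $h$.

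For case (1) I would take $x$ to be the endpoint of the bridge on $\gamma_1$. A component analysis of $T \setminus \gamma_1$ shows that the geodesic $[x, g_1 g_2 x]$ threads the bridge, translates along $\gamma_2$ by $l(g_2)$, returns along the translated bridge, and continues along $\gamma_1$ by $l(g_1)$ without backtracking, yielding the claimed formula and placing $x$ on the axis of $g_1 g_2$. For case (2), the same decomposition with $x \in \gamma_1 \cap \gamma_2$ chosen on the attracting side of both isometries gives $\ell = 0$ and hence $l(g_1 g_2) = l(g_1) + l(g_2)$. For case (3), I would take $x$ at the endpoint of $\gamma_1 \cap \gamma_2$ in the direction toward which $g_1$ translates; the cancellation $\ell$ is then determined by the relationship between $\Delta$ and $\min\{l(g_1), l(g_2)\}$. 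In sub-case (i) the cancellation is exactly $\Delta$; in sub-case (ii) the entire shorter axis is swallowed by the overlap, and the surplus accounts for a translation of length $|l(g_1)-l(g_2)|$; in sub-case (iii) the two axes, after one application, overlap along a further path of length $\Delta'$, and a \Cref{PPL}-style computation distinguishes the hyperbolic outcome from the elliptic one. In every sub-case, verifying that the chosen $x$ actually realises the minimum $l(g_1 g_2)$ (rather than merely an upper bound) follows either by checking that the same cancellation pattern is available at $g_1 g_2 \cdot x$, or by exhibiting a fixed vertex of $g_1 g_2$ in the elliptic case.

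The main obstacle I expect is sub-case (3)(iii). Here $\Delta = \min\{l(g_1), l(g_2)\}$ is the critical boundary value, the orientation of $g_1 g_2$ on the overlap becomes degenerate, and the elliptic-versus-hyperbolic dichotomy is governed by the secondary overlap $\Delta'$ between, say, $\gamma_2$ and $g_1 \cdot \gamma_2$ (when $l(g_1) \le l(g_2)$). Correctly extracting $\Delta'$, and then either exhibiting the shifted axis of $g_1 g_2$ (when $\Delta' < |l(g_1)-l(g_2)|/2$) or an explicit vertex fixed by $g_1 g_2$ (otherwise), requires careful tracking of how the shorter axis interacts with its image under the longer isometry, and I expect this to account for the bulk of the case analysis.
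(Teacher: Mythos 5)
The paper does not actually prove this proposition: its proof is the single line ``See \cite[Proposition 3.5]{C}'', so there is no argument in the paper to compare against in detail. Your outline is the standard direct computation that the cited reference (and, e.g., Paulin's work) carries out: choose a basepoint $x$ adapted to the configuration, compute $d(x,g_1g_2x)=d(x,g_2x)+d(g_2x,g_1g_2x)-2\ell$ with $\ell$ the common initial segment of $[g_2x,x]$ and $[g_2x,g_1g_2x]$, read off the two displacements from $d(y,hy)=l(h)+2d(y,\mathrm{Axis}(h))$, and certify minimality by checking that $[g_1g_2x,x]$ and $[g_1g_2x,(g_1g_2)^2x]$ meet only in $g_1g_2x$ (or by exhibiting a fixed vertex in the elliptic case). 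That strategy is sound, your case division is exhaustive, and you correctly identify $(3)(iii)$ --- where $\Delta=\min\{l(g_1),l(g_2)\}$ and the answer is governed by the secondary overlap $\Delta'$ of $\gamma_2$ with $g_1\cdot\gamma_2$ --- as the only sub-case with real content; the paper's \Cref{bigint3} and \Cref{bigint2} depict exactly the two outcomes you describe there, though your sketch leaves that analysis unexecuted. One bookkeeping point to repair in the write-up of case $(2)$: the cancellation $\ell$ is \emph{not} zero for every admissible choice of $x$ in the overlap. If $x$ is the initial vertex $p$ of $\gamma_1\cap\gamma_2$ and $l(g_2)>\Delta(\gamma_1,\gamma_2)$, then $g_2x$ has already left the overlap, so $\ell=l(g_2)-\Delta$ while $d(g_2x,\gamma_1)=l(g_2)-\Delta$ as well; it is the cancellation of these two equal nonzero terms, not their vanishing, that yields $l(g_1g_2)=l(g_1)+l(g_2)$. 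The analogous care with the exact value of $\ell$ is needed in $(3)(i)$, where the net correction is $-2\Delta$ even though $\ell$ itself equals $l(g_2)$ for the natural choice of basepoint. These are slips of bookkeeping rather than of method, and they disappear once the computation is carried out literally.
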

\begin{proof}
See \cite[Proposition 3.5]{C}. 
\end{proof}

In \Cref{app}, we include figures which illustrate each case of \Cref{overlap}. We repeatedly use \Cref{overlap} and these figures in the proof of \Cref{thm:conj} in \Cref{proofs}. We will also require the following observation:

\begin{cor}\label{ellfix}
Let $g_1, g_2$ be hyperbolic isometries of a simplicial tree, with respective axes $\gamma_1, \gamma_2$. If $g_1g_2$ is elliptic, then $\gamma_1$ and $\gamma_2$ intersect with opposite orientations along a path $[p,q]$ of length $\Delta \ge \min\{l(g_1),l(g_2)\}$ (where $g_1$ and $g_2^{-1}$ translate $p$ towards $q$) and one of the following holds:
\begin{itemize}
\item $l(g_1)=l(g_2)$, and $g_1g_2$ fixes $q$;
\item $\Delta=\min\{l(g_1),l(g_2)\}$, and $g_1g_2$ fixes a vertex which is at distance $\frac{1}{2}|l(g_1)-l(g_2)|$ from $q$;
\end{itemize}
\end{cor}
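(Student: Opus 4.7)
The plan is to derive this corollary directly from \Cref{overlap} by eliminating every case in which $l(g_1g_2) > 0$, and then locating the fixed vertex in each surviving configuration.

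First, I would observe that since $g_1g_2$ is elliptic, $l(g_1g_2)=0$. Going through the cases of \Cref{overlap}: case (1) gives $l(g_1g_2) \ge l(g_1)+l(g_2) > 0$, case (2) gives $l(g_1g_2)=l(g_1)+l(g_2)>0$, and case (3)(i) gives $l(g_1g_2)=l(g_1)+l(g_2)-2\Delta$ with $\Delta<\min\{l(g_1),l(g_2)\}$, which is also strictly positive. So only cases (3)(ii) and (3)(iii) remain, and in both the axes overlap with opposite orientations along a path $[p,q]$ of length $\Delta \ge \min\{l(g_1),l(g_2)\}$. The orientation claim (that $g_1$ and $g_2^{-1}$ translate $p$ towards $q$) follows from the convention used in the figures in \Cref{app} illustrating these cases.

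Next, in case (3)(ii), the equation $l(g_1g_2)=|l(g_1)-l(g_2)|=0$ forces $l(g_1)=l(g_2)$; this puts us in the first bullet of the corollary. To verify $g_1g_2(q)=q$, I would apply $g_2$ to $q$: since $g_2^{-1}$ translates $p$ towards $q$, $g_2$ moves $q$ a distance $l(g_2)$ along $\gamma_2$ towards $p$, and since $\Delta > l(g_2)$ this image lies in the overlap $[p,q]$. Then applying $g_1$, which translates $p$ towards $q$ by $l(g_1)=l(g_2)$, sends this image back to $q$.

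For case (3)(iii), we have $\Delta=\min\{l(g_1),l(g_2)\}$, and $l(g_1g_2)=0$ forces $\Delta' \ge \tfrac{1}{2}|l(g_1)-l(g_2)|$, which falls under the second bullet. Assume without loss of generality $l(g_1)\le l(g_2)$, so $\Delta=l(g_1)$ and we must track $\gamma_2$ and $g_1\cdot\gamma_2$. Using the corresponding figure from \Cref{app}, $g_1\cdot\gamma_2$ emerges from $q$ (the $g_1$-image of $p$) and its overlap with $\gamma_2$ of length $\Delta'$ sits adjacent to $q$. A direct computation along this configuration shows that $g_1g_2$ fixes the vertex at distance $\tfrac{1}{2}(l(g_2)-l(g_1))$ from $q$ along this overlap; when $l(g_1)=l(g_2)$ the distance is $0$ and the fixed vertex is $q$, consistent with the first bullet. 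The case $l(g_2) < l(g_1)$ is symmetric under swapping roles.

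The only real obstacle is getting the figure-driven identification of the fixed vertex right in case (3)(iii): keeping track of directions, of when $g_2(q)$ lies inside vs.\ outside the overlap, and of the two sub-sub-cases $\Delta'=\tfrac{1}{2}|l(g_1)-l(g_2)|$ and $\Delta' > \tfrac{1}{2}|l(g_1)-l(g_2)|$. I expect that both sub-cases yield the same fixed vertex once the geometry is drawn, so that the statement absorbs them uniformly; the calculation itself is a short midpoint-style computation on a path and is not genuinely hard, just fiddly.
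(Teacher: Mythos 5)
Your proposal is correct and follows essentially the same route as the paper: use $l(g_1g_2)=0$ to eliminate cases $(1)$, $(2)$ and $(3)(i)$ of \Cref{overlap}, so that only $(3)(ii)$ (forcing $l(g_1)=l(g_2)$) and $(3)(iii)$ (forcing $\Delta=\min\{l(g_1),l(g_2)\}$ and $\Delta'\ge\tfrac{1}{2}|l(g_1)-l(g_2)|$) survive, and then locate the fixed vertex from the figures. The one step you defer --- the `midpoint-style computation' in case $(3)(iii)$ --- is exactly what the paper carries out via \Cref{bigint2}, splitting into $\Delta'<l(g_2)-l(g_1)$ and $\Delta'\ge l(g_2)-l(g_1)$ and taking the midpoint of $[r,g_1g_2r]$ or of $[q,g_1g_2q]$ respectively; both land at distance $\tfrac{1}{2}(l(g_2)-l(g_1))$ from $q$, confirming your expectation that the two sub-cases agree.
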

\begin{proof}
By \Cref{overlap}, $\gamma_1$ and $\gamma_2$ must intersect with opposite orientations along a path of length $\Delta \ge \min\{l(g_1),l(g_2)\}$. If $l(g_1)=l(g_2)$, then $g_1g_2$ fixes $q$, so suppose that $l(g_1) \neq l(g_2)$. Hence we must be in case $(3)(iii)$ of \Cref{overlap} and $\Delta=\min\{l(g_1),l(g_2)\}$. 

After interchanging the roles of $g_1$ and $g_2$, if necessary, we may suppose that $\Delta=l(g_1)<l(g_2)$. It follows that $g_1g_2$ is elliptic if the path $\gamma_2 \cap g_1 \cdot \gamma_2$ has length $\Delta' \ge \frac{l(g_2)-l(g_1)}{2}$. As in the left-hand diagram of \Cref{bigint2}, if $\Delta'<l(g_2)-l(g_1)$, then $g_1g_2$ fixes the midpoint of the path $[r,g_1g_2r]$, which lies on $\gamma_2$ at distance $\frac{1}{2}(d(q,g_1g_2r)+\Delta')=\frac{1}{2}(l(g_2)-l(g_1))$ from $q$. On the other hand, if $\Delta'\ge l(g_2)-l(g_1)$, then $g_1g_2$ fixes the midpoint of $[q,g_1g_2q]$, which lies on $\gamma_2$ at distance $\frac{1}{2}d(q,g_1g_2q)=\frac{1}{2}(l(g_2)-l(g_1))$ from $q$; see the right-hand diagram of \Cref{bigint2}.
\end{proof}

We conclude this section with an important class of isometries:

\begin{exm}\label{BT}
Let $p$ be a prime and define the \textit{$p$-adic valuation} of $x\in\qq$ to be $v_p(x)=r$, where $x=p^r\frac{a}{b}$ with $p \nmid a,b$. The completion of $\qq$ with respect to the absolute value $|x|_p=p^{-v_p(x)}$ is $\qq_p$, the field of \textit{$p$-adic numbers}. Associated to $\qq_p$ is a $(p+1)$-regular simplicial tree $T_p$, known as the \textit{Bruhat-Tits tree}; see \cite[Chapter II.1]{Serre} for further detail.

The topological group ${\rm Isom}(T_p)$ contains ${\rm PSL_2}(\qq_p)$, where the latter is equipped with the quotient topology inherited from ${\rm SL_2}(\qq_p)$. We therefore may identify elements of ${\rm SL_2}(\qq_p)$ with isometries of $T_p$, and such isometries act without inversions. The translation length of $A \in \slqp$ is
\begin{align}\label{TL}
l(A)=-2\min\{0,v_p({\rm tr}(A))\},
\end{align}
where ${\rm tr(A)}$ denotes the trace of $A$ \cite[Proposition II.3.15]{MS}.
\end{exm}

\section{The algorithm}\label{sec:alg}

In this section we prove Theorems \ref{thm:alg} and \ref{ihara} by presenting an algorithm which, subject to \Cref{conj}, decides whether or not a finitely generated subgroup of the isometry group of a locally finite simplicial tree $T$ is both discrete and free. The algorithm requires a method of computing translation lengths on $T$. In particular, using \Cref{TL}, this algorithm can be applied to finitely generated subgroups of $\slqp$. We first note the following:


\begin{prop}\label{elltree}
Let $T$ be a locally finite simplicial tree. If a subgroup of ${\rm Isom}(T)$ is both discrete and free, then it contains no elliptic element.
\end{prop}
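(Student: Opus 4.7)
The plan is to argue by contradiction: suppose that some subgroup $G \leq \ist$ is both discrete and free but contains a nontrivial elliptic element $g$. By definition $g$ fixes some vertex $v \in V(T)$, so $\langle g \rangle \subseteq K$, where $K := \mathrm{Stab}_{\ist}(v)$. The goal is to show that this forces $g$ to have finite order, contradicting the fact that free groups are torsion-free.

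The key ingredient I would isolate is that $K$ is compact in the topology of pointwise convergence on $\ist$. Here local finiteness of $T$ is essential: for each $n \geq 0$ the closed ball $B_n(v) \subseteq V(T)$ is finite, and every element of $K$ preserves $B_n(v)$ setwise. The resulting continuous map $K \to \prod_{n \geq 0} \mathrm{Sym}(B_n(v))$ is injective (an isometry that fixes every vertex is the identity) and identifies $K$ with a closed subspace of a compact product of finite symmetric groups.

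Given this, the conclusion drops out quickly. Since $\ist$ is Hausdorff and $G$ is discrete, $G$ is closed in $\ist$, so $G \cap K$ is closed in the compact set $K$ and hence compact; but $G \cap K$ also inherits the discrete topology from $G$, and a compact Hausdorff discrete space is finite. Therefore $\langle g \rangle \subseteq G \cap K$ is finite, so $g$ has finite order, contradicting the assumption that $g$ is a nontrivial element of the free group $G$.

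The only step with any real content is the compactness of vertex stabilisers, which is standard for locally compact groups acting on locally finite trees and only needs a one-line justification; everything else is a direct application of the fact that discrete subgroups of compact Hausdorff groups are finite. So I do not anticipate a substantial obstacle.
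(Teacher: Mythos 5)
Your proof is correct and is essentially the standard argument: the paper itself gives no proof here, deferring to an external citation, and the argument behind that citation is the same one you give — local finiteness makes vertex stabilisers profinite (hence compact), a discrete subgroup is closed and therefore meets a vertex stabiliser in a finite subgroup, and torsion-freeness of free groups finishes the job. All the steps you flag as routine (closedness of the image in the product of finite symmetric groups, closedness of discrete subgroups of Hausdorff topological groups) are indeed routine and correctly used.
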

\begin{proof}
See \cite[Proposition 5.1]{C}.
\end{proof}

\begin{alg}\label{alg}
Let $T$ be a locally finite simplicial tree. Given a finitely generated subgroup $H=\langle h_1, \dots, h_n\rangle \le \ist$, we proceed as follows.

If $H$ is both discrete and free of rank $n$, then the algorithm returns {\tt true} and outputs an $n$-tuple of isometries which generates $H$ and satisfies the hypotheses of the Ping Pong Lemma. Otherwise, the algorithm returns {\tt false} and outputs an elliptic element of $H$.
\begin{enumerate}[label={$(\arabic*)$}]
\item Set $g_i=h_i$ for each $i \in \{1, \dots, n\}$ and initialise $X=(g_1,\dots, g_n)$.
\item If $l(g_i)=0$ for some $i \in \{1, \dots, n\}$, then return {\tt false} and the element $g_i$.
\item If $L(X^j_{S_1,S_2})<L(X)$ for some $j \in \{1,\dots,n\}$ and $S_1,S_2 \subseteq \{1,\dots, n \} \backslash \{j\}$, then replace $g_i$ with $g_jg_i$ if $i \in S_1$ and $g_i$ with $g_ig_j^{-1}$ if $i \in S_2$, and return to $(2)$.
\item Return {\tt true} and the $n$-tuple $X=(g_1,\dots,g_n)$.
\end{enumerate}
\end{alg}

\begin{proof}[Proof of Theorem $\ref{thm:alg}$]
It suffices to show that, subject to \Cref{conj}, \Cref{alg} terminates and produces the correct output.

The only recursive step in \Cref{alg} is step $(3)$. Since $L(X)$ is a positive integer which strictly decreases upon each iteration, the algorithm must eventually terminate.

If the algorithm returns false, then $H$ is not both discrete and free by \Cref{elltree}. Otherwise, we must reach step $(4)$ and hence $X$ is minimal. If \Cref{conj} holds, then the elements of $X$ must satisfy the hypotheses of \Cref{PPL} and hence $H$ is both discrete and free of rank $n$.
\end{proof}

\begin{proof}[Proof of Theorem $\ref{ihara}$]
If $H$ contains an elliptic element, then it cannot be both discrete and free by \Cref{elltree}. If $H$ contains no elliptic element, then \Cref{alg} will terminate and return {\tt true}.
\end{proof}

Note that \Cref{alg} also gives a constructive proof of Weidmann's Theorem \cite[Theorem 4]{AG} for locally finite simplicial trees: by recording the product replacements performed in step $(3)$, the elliptic element returned at step $(2)$ can be written as a word in the input generators $h_1, \dots, h_n$.

\section{Implementation of the algorithm}\label{algimp}

\Cref{alg} has exponential complexity in terms of the input size $n$: for each $j$, there are $4^{n-1}$ choices of the subsets $S_1,S_2$ in step $(3)$. For isometries of the Bruhat-Tits tree $T_p$ represented by elements of $\slq$ (viewed as elements of $\slqp$), we have implemented \Cref{alg} in {\sc Magma}. Our implementation is publicly available; see \cite{git}. It runs efficiently when $n, p$ and the translation lengths of the input elements are small.

For a fixed prime $p$, we may generate `random' hyperbolic elements of $\slqp$ as rational matrices of the form 
$\left[ \begin{array}{cc}
ap^{e} & bp^{f}\\ [4pt]
cp^{g} & d \end{array} \right]$
where $a,b,c, e, f,g$ are randomly chosen from the interval $[-N,N] \subseteq \zz$ for some fixed positive integer $N$, and $d \in \qq$ is such that the resulting matrix has determinant 1. Such elements have a translation length of at most $2(3N+\lfloor \log_p(N) \rfloor)$.

In \Cref{runtime}, we record average runtimes for \Cref{alg} across 1000 such $n$-generated subgroups of $\slqp$ for $N=10$. These runtimes were obtained using {\sc Magma V2.25-9}.

\begin{table}[h!]
\begin{tabular}{c|ccccc}
\backslashbox{$p$}{$n$} & 2 & 3 & 4 & 5 & 6  \\
\hline
2  & 0.001 & 0.017 & 0.225 & 1.679 & 9.939 \\ 
3  & 0.001 & 0.027 & 0.411 & 3.773 & 36.681 \\ 
5  & 0.001 & 0.031 & 0.563 & 5.036 & 54.078 \\ 
7 & 0.001 & 0.033 & 0.648 & 6.615  &  62.153 \\ 
11  & 0.001 & 0.040 & 0.748  & 7.838 & 82.230  \\ 
13  & 0.001 & 0.042 & 0.805 & 8.644 & 88.538  \\ 
\end{tabular}
\caption{Average runtime (in seconds) for \Cref{alg} across 1000 `random' $n$-generated subgroups of $\slqp$.} \label{runtime}
\vspace{-0.5cm}
\end{table}

\section{Comments on the conjecture}\label{sec:conj}

There is significant computational evidence that \Cref{conj} holds for $n>3$: using {\sc Magma}, we have generated millions of examples of minimal $n$-tuples of rational matrices of determinant 1 (viewed as elements of $\slqp$, and hence isometries of $T_p$) for $4 \le n \le 10$ and primes $p \le 11$. In every case, the conclusion of \Cref{conj} holds. 

We now discuss some reasons why our definition of minimality cannot be simplified. An algorithm given in \cite{C} takes as input a pair of isometries $(g_1,g_2)$ of a simplicial tree $T$ and performs single product replacements which strictly decrease the sum $l(g_1)+l(g_2)$ until a `minimal' Nielsen-equivalent pair is obtained. Such a `minimal' pair either contains an elliptic isometry or satisfies the hypotheses of the Ping Pong Lemma. The following example shows that this notion of minimality does not immediately generalise to $n$-tuples of isometries.

\begin{exm}\label{notsum}
Consider the following matrices in ${\rm SL_2}(\qq_5)$:
$$
g_1=\left[ \begin{array}{cc}
\frac{1}{5} & -\frac{1}{5}\\ [4pt]
-\frac{1}{5} & \frac{26}{5} \end{array} \right], \;\;
g_2=\left[ \begin{array}{cc}
-1 & 1\\ [4pt]
-\frac{1}{5} & -\frac{4}{5} \end{array} \right], \;\;
g_3=\left[ \begin{array}{cc}
5 & 0\\ [4pt]
0 & \frac{1}{5} \end{array} \right].
$$
Since $v_5({\rm tr}(g_i))=-1$ for each $i$, it follows from \Cref{TL} that each $g_i$ is a hyperbolic isometry of the Bruhat-Tits tree $T_5$ with translation length 2. Observe from \Cref{overlap} that ${\rm Proj}_{\gamma_3}(\gamma_1)\cup {\rm Proj}_{\gamma_3}(\gamma_2)$ is a subpath of $\gamma_3$ of length 2 and hence the 3-tuple $X=(g_1,g_2,g_3)$ does not satisfy the hypotheses of the Ping Pong Lemma. Moreover, no product replacements can strictly reduce the sum $l(g_1)+l(g_2)+l(g_3)$.
\end{exm}

We also present an example which motivated the notion of minimality given in \Cref{min}.

\begin{exm}\label{notsingle}
Let $X=(g_1,g_2,g_3,g_4,g_5)$ be a 5-tuple of hyperbolic isometries of the Bruhat-Tits tree $T_7$, where

$$
g_1=\left[ \begin{array}{cc}
\frac{129}{49} & -\frac{178}{49}\\ [4pt]
\frac{6}{49} & \frac{31}{147} \end{array} \right], \;\;
g_2=\left[ \begin{array}{cc}
-\frac{688}{49} & -\frac{1}{7}\\ [4pt]
\frac{1031}{49} & \frac{1}{7} \end{array} \right], \;\;
g_3=\left[ \begin{array}{cc}
-\frac{1}{49} & -\frac{3}{49}\\ [4pt]
2 & -43 \end{array} \right],
$$
$$
g_4=\left[ \begin{array}{cc}
\frac{9}{7} & -\frac{25}{21}\\ [4pt]
-\frac{60}{49} & \frac{281}{147} \end{array} \right], \;\;
g_5=\left[ \begin{array}{cc}
7 & 7\\ [4pt]
-\frac{3}{7} & -\frac{2}{7} \end{array} \right].
$$

Each isometry is hyperbolic, with $l(g_1)=l(g_2)=l(g_3)=l(g_4)=4$ and $l(g_5)=2$. Of the twenty products of the form $g_ig_j$ or $g_ig_j^{-1}$ for $1 \le i< j \le 5$, ten have translation length 8, four have translation length 6 and six have translation length 4. Thus $L(X)=146$.

Observe from \Cref{overlap} that ${\rm Proj}_{\gamma_5}(\gamma_1)\cup {\rm Proj}_{\gamma_5}(\gamma_2)$ is a subpath of $\gamma_5$ of length 2; it follows that $X$ does not satisfy the hypotheses of the Ping Pong Lemma. There is also no single product replacement which strictly reduces the sum $L(X)$. Moreover, for each $j \in \{1,2,3,4,5\}$, there is no subset $S$ of $\{1, 2, 3, 4, 5\} \backslash \{j\}$ for which replacing every $g_i$ with $i \in S$ by $g_jg_i$ (respectively $g_ig_j^{-1}$) strictly reduces the value of $L(X)$. However, the replacements
$$g_1 \mapsto g_5g_1, \;\; g_3 \mapsto g_5g_3g_5^{-1}$$
produce the 5-tuple $X^{5}_{\{1,3\},\{3\}}$ with $L(X^{5}_{\{1,3\},\{3\}})=144<L(X)$.
\end{exm}

\section{Proof of \Cref{thm:conj}}\label{proofs}

In this final section, we prove \Cref{thm:conj}. We split the proof into several lemmas. The first lemma proves that \Cref{conj} holds when $n=2$. The remaining six lemmas together prove that \Cref{conj} holds when $n=3$; each lemma corresponds to one possible case of how the axes $\gamma_1, \gamma_2, \gamma_3$ can interact.

Throughout the section, we will use $\Delta(\gamma_i,\gamma_j)$ to denote the length of the path of intersection $\gamma_i \cap \gamma_j$ between the axes $\gamma_i$ and $\gamma_j$.

\begin{lem}\label{2}
Let $X=(g_1, g_2)$ be a pair of hyperbolic isometries of a simplicial tree. If $X$ does not satisfy the hypotheses of the Ping Pong Lemma, then it is not minimal.
\end{lem}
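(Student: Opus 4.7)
The plan is to prove the contrapositive: assume $X = (g_1, g_2)$ does not satisfy the hypotheses of \Cref{PPL}, and exhibit a product replacement that strictly decreases $L(X) = l(g_1) + l(g_2) + l(g_1 g_2) + l(g_1 g_2^{-1})$. Since conjugation-style replacements $g_i \mapsto g_j g_i g_j^{-1}$ leave $L$ invariant (translation length is a conjugacy invariant), only four essentially distinct replacements need be considered. A direct expansion, using the conjugation- and inversion-invariance of $l$, gives an explicit change for each:
\begin{align*}
g_1 \mapsto g_2 g_1 &:\quad \Delta L = l(g_1 g_2^2) - l(g_1 g_2^{-1}), \\
g_1 \mapsto g_1 g_2^{-1} &:\quad \Delta L = l(g_1 g_2^{-2}) - l(g_1 g_2), \\
g_2 \mapsto g_1 g_2 &:\quad \Delta L = l(g_1^2 g_2) - l(g_1 g_2^{-1}), \\
g_2 \mapsto g_2 g_1^{-1} &:\quad \Delta L = l(g_1^2 g_2^{-1}) - l(g_1 g_2).
\end{align*}
Thus minimality of $X$ is equivalent to all four of these being nonnegative.

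Next, I would use \Cref{overlap} to classify the interaction of $\gamma_1$ and $\gamma_2$. The hypotheses of \Cref{PPL} fail precisely when the axes meet along a common subpath of length $\Delta \ge \min\{l(g_1), l(g_2)\}$ — either with the same orientation (a subcase of $(2)$, including a shared infinite ray) or with opposite orientations (cases $(3)(ii)$ and $(3)(iii)$). To evaluate the four $\Delta L$ expressions, I would apply \Cref{overlap} a second time to the pairs $(g_1, g_2^{\pm 2})$ and $(g_1^{\pm 2}, g_2)$. Because $g_j^k$ shares the axis $\gamma_j$ — preserving its orientation for $k>0$ and reversing it for $k<0$ — the qualitative overlap type (same versus opposite orientation) carries over directly and only the threshold $\min\{l(g_1), l(g_j^k)\}$ shifts. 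Assuming WLOG $l(g_1) \ge l(g_2)$ and substituting these formulas into the $\Delta L$ expressions, I would verify in each PPL-failing configuration that at least one of the four $\Delta L$ values is strictly negative.

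The main obstacle will be case $(3)(iii)$, where $\Delta = \min\{l(g_1), l(g_2)\}$ exactly and the auxiliary overlap $\Delta' = \Delta(\gamma_2, g_1\cdot \gamma_2)$ (or $\Delta(\gamma_1, g_2\cdot\gamma_1)$) enters \Cref{overlap}, producing further subcases in which one of the products $g_1 g_2^{\pm 2}$ or $g_1^{\pm 2} g_2$ may become elliptic. This situation is actually favourable: an elliptic product has translation length $0$, which only makes the corresponding $\Delta L$ more negative. The subcase of $(2)$ with a shared infinite ray is handled similarly — an appropriately chosen replacement cancels the shared ray from the resulting product's axis and yields a sharp drop in the relevant translation length. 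The overall argument is bookkeeping-heavy but finite, with each PPL-failing configuration producing an explicit replacement that witnesses non-minimality of $X$.
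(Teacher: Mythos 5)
Your proposal is correct and takes essentially the same route as the paper: the paper first normalises (without loss of generality) to the case where $\gamma_1$ and $\gamma_2$ intersect with the same orientation and $l(g_1)\le l(g_2)$, and then shows that the single replacement $g_2\mapsto g_2g_1^{-1}$ --- whose effect on $L$ is exactly your fourth $\Delta L$ formula --- satisfies $l(g_1^2g_2^{-1})\le l(g_1g_2)-l(g_1)$ by applying \Cref{overlap} to $g_1^2$ and $g_2^{-1}$. Your four $\Delta L$ computations are correct, and your key observation that \Cref{overlap} applies to powers sharing an axis is precisely the mechanism the paper uses, so the deferred case analysis does go through.
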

\begin{proof}
Since $X$ does not satisfy the hypotheses of the Ping Pong Lemma, the axes $\gamma_1$ and $\gamma_2$ overlap in a path of length $\Delta(\gamma_1,\gamma_2) \ge \min\{l(g_1), l(g_2)\}.$ Without loss of generality, we may further assume that $l(g_1) \le l(g_2)$ and that $\gamma_1$ and $\gamma_2$ have the same orientation. We will show that $L(X^{1}_{\{\},\{2\}})<L(X)$, so $X$ is not minimal.

First observe that replacing $g_2$ by $g_2g_1^{-1}$ preserves $l(g_1)$ and changes the other terms in the expression of $L(X)$ as follows:
\begin{align*}
l(g_1g_2) \mapsto l(g_2) \mapsto l(g_1g_2^{-1}) \mapsto l(g_1^2g_2^{-1}).
\end{align*}
\Cref{overlap} $(2)$ shows that $l(g_1g_2)=l(g_1)+l(g_2)$, so it remains to find $l(g_1^2g_2^{-1})$. Since $g_1^2$ has the same axis as $g_1$ but double the translation length, \Cref{overlap} (applied to $g_1^2$ and $g_2^{-1}$) shows that 
\begin{align*}
l(g_1^2g_2^{-1}) &\le 2l(g_1)+l(g_2)-2\Delta \\
& \le l(g_1g_2)-l(g_1),
\end{align*}
and it follows that $L(X^{1}_{\{\},\{2\}})<L(X)$ as required.
\end{proof}

\begin{lem}\label{3noint}
Let $X=(g_1,g_2,g_3)$ be a triple of hyperbolic isometries of a simplicial tree such that no pair of the axes $\gamma_1, \gamma_2, \gamma_3$ intersects. If $X$ does not satisfy the hypotheses of the Ping Pong Lemma, then it is not minimal.
\end{lem}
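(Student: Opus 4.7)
The plan is to exploit the fact that pairwise disjoint axes in a tree have a very rigid combinatorial structure. Since $\gamma_1, \gamma_2, \gamma_3$ are pairwise disjoint, case $(1)$ of \Cref{overlap} gives $l(g_i g_j) = l(g_i g_j^{-1}) = l(g_i) + l(g_j) + 2 d(\gamma_i, \gamma_j)$ for every pair, so writing $d_{ij} = d(\gamma_i, \gamma_j)$ we obtain the closed formula
$$L(X) = 5(l(g_1) + l(g_2) + l(g_3)) + 4(d_{12} + d_{13} + d_{23}),$$
and each projection ${\rm Proj}_{\gamma_i}(\gamma_j)$ is a single vertex $p_{ij} \in \gamma_i$.

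I would first argue that the convex hull of the three axes must be either a \emph{tripod}, in which three disjoint arcs emanate from a single branch point into the three axes so that $p_{ij} = p_{ik}$ on every $\gamma_i$; or \emph{linear}, in which one axis---say $\gamma_1$ after relabelling---lies ``between'' the other two in the sense that the geodesic from $\gamma_2$ to $\gamma_3$ traverses a subpath $[p_{12}, p_{13}] \subseteq \gamma_1$ of length $D > 0$, while $p_{21} = p_{23}$ and $p_{31} = p_{32}$. In the tripod case one can centre an open segment of length $l(g_i) \ge 1$ at the unique projection point on each $\gamma_i$, verifying the hypotheses of \Cref{PPL}; by assumption, therefore, we are in the linear case. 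Since the projections coincide on the outer axes $\gamma_2$ and $\gamma_3$, failure of \Cref{PPL} forces the remaining distinct projections $p_{12}, p_{13}$ on the middle axis $\gamma_1$ to satisfy $D \ge l(g_1)$.

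For the key step I would relabel $g_2$ and $g_3$ if needed so that $g_1$ translates $p_{12}$ along $\gamma_1$ in the direction of $p_{13}$; because $l(g_1) \le D$, the image $g_1 \cdot p_{12}$ lies on $[p_{12}, p_{13}]$. Consider the product replacement $X' = X^{1}_{\{2\}, \{2\}}$, whose second entry is the conjugate $g_1 g_2 g_1^{-1}$ with axis $\gamma_2' = g_1 \cdot \gamma_2$ and translation length $l(g_2)$. Because $g_1$ preserves $\gamma_1$ setwise, $\gamma_2'$ attaches to $\gamma_1$ at the projection point $g_1 \cdot p_{12}$, so $d(\gamma_1, \gamma_2') = d_{12}$; and the triple $(\gamma_1, \gamma_2', \gamma_3)$ remains in linear configuration with middle segment $[g_1 \cdot p_{12}, p_{13}]$ of length $D - l(g_1)$, giving $d(\gamma_2', \gamma_3) = d_{12} + (D - l(g_1)) + d_{13} = d_{23} - l(g_1)$. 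All other pairwise axis distances are unchanged, so plugging into the formula yields
$$L(X') = L(X) - 4 l(g_1) < L(X),$$
and hence $X$ is not minimal.

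The main obstacle is the initial classification of three disjoint axes in a tree into tripod versus linear, together with the careful bookkeeping of how the attachment points and pairwise axis distances transform under the conjugation $g_2 \mapsto g_1 g_2 g_1^{-1}$. Both are standard consequences of projection and bridge properties in trees, though some attention is needed at the boundary case $l(g_1) = D$, where $g_1 \cdot p_{12} = p_{13}$ and the post-replacement configuration degenerates to a tripod; the distance identity and the reduction in $L$ continue to hold in this degenerate case.
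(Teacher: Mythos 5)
Your proof is correct and follows essentially the same route as the paper, which (after labelling the middle axis as $\gamma_3$) performs the corresponding conjugation $g_1 \mapsto g_3 g_1 g_3^{-1}$, i.e.\ the replacement $X^3_{\{1\},\{1\}}$, and bounds the two affected terms $l(g_1g_2)$ and $l(g_1g_2^{-1})$ directly rather than via a closed formula for $L(X)$. The one small imprecision is that in the boundary case $l(g_1)=D$ the identity $d(\gamma_2',\gamma_3)=d_{23}-l(g_1)$ may weaken to the inequality $d(\gamma_2',\gamma_3)\le d_{23}-l(g_1)$ (the bridges from $\gamma_2'$ and $\gamma_3$ to $p_{13}$ can overlap, or the axes can even meet), but this is exactly the direction needed, so the conclusion $L(X')<L(X)$ is unaffected.
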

\begin{proof}
Without loss of generality, we may assume that the distance $\Delta_3$ between the vertices ${\rm Proj}_{\gamma_3}(\gamma_1)$ and ${\rm Proj}_{\gamma_3}(\gamma_2)$ is at least $l(g_3)$, and that $g_3$ translates $\gamma_1$ towards $\gamma_2$. Consider the replacement $g_1 \mapsto g_3g_1g_3^{-1}$. This preserves all terms in the expression of $L(X)$ except for the following:
\begin{align}\label{1to313}
l(g_1g_2) & \mapsto l(g_3g_1g_3^{-1}g_2) \\
l(g_1g_2^{-1}) & \mapsto l(g_3g_1g_3^{-1}g_2^{-1}). \notag 
\end{align}

The axis of $g_3g_1g_3^{-1}$ is $g_3 \cdot \gamma_1$, which is at distance at most $d(\gamma_1, \gamma_2)-l(g_3)$ from $\gamma_1$. Hence \Cref{overlap} shows that 
\begin{align*}
l(g_3g_1g_3^{-1}g_2) &\le l(g_3g_1g_3^{-1})+l(g_2)+2(d(\gamma_1, \gamma_2)-l(g_3)) \\
& = l(g_1g_2)-2l(g_3)
\end{align*}
and, similarly, $l(g_3g_1g_3^{-1}g_2^{-1}) \le l(g_1g_2^{-1})-2l(g_3)$. Thus $L(X^{3}_{\{1\},\{1\}})<L(X)$ and $X$ is not minimal.
\end{proof}

\begin{lem}\label{3onepair}
Let $X=(g_1,g_2,g_3)$ be a triple of hyperbolic isometries of a simplicial tree such that exactly one pair of the axes $\gamma_1, \gamma_2, \gamma_3$ intersects. If $X$ does not satisfy the hypotheses of the Ping Pong Lemma, then it is not minimal.
\end{lem}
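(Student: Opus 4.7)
Without loss of generality, suppose that $\gamma_1 \cap \gamma_2$ is a path $I$ of length $\Delta:=\Delta(\gamma_1,\gamma_2)$, while $\gamma_3$ is disjoint from both $\gamma_1$ and $\gamma_2$. The plan is to exhibit a product replacement that strictly decreases $L(X)$, as in the preceding lemmas.

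My first observation is that the Ping Pong hypothesis cannot fail on $\gamma_3$: since $\gamma_1\cup\gamma_2$ is a connected subtree disjoint from $\gamma_3$, the bridge from $\gamma_3$ to $\gamma_1\cup\gamma_2$ has a unique foot $w$ on $\gamma_3$, and every point of $\gamma_1\cup\gamma_2$ has $w$ as its unique closest point on $\gamma_3$. Hence ${\rm Proj}_{\gamma_3}(\gamma_1) = {\rm Proj}_{\gamma_3}(\gamma_2) = \{w\}$, so the union of projections on $\gamma_3$ is a single vertex, which trivially fits inside any open segment of length $l(g_3)$. Ping Pong therefore fails on $\gamma_1$ or $\gamma_2$; by symmetry, assume on $\gamma_1$. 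Writing $u:={\rm Proj}_{\gamma_1}(\gamma_3)$, the convex hull of $I\cup\{u\}$ in $\gamma_1$ must then have length at least $l(g_1)$, forcing one of: Case (A) $\Delta\ge l(g_1)$; or Case (B) $\Delta<l(g_1)$, in which case $u\notin I$ and $d(u,I)\ge l(g_1)-\Delta>0$.

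In Case (A), I would replay the argument of \Cref{2} on the pair $(g_1,g_2)$: assuming WLOG $l(g_1)\le l(g_2)$, one of $g_2\mapsto g_2g_1^{-1}$, $g_2\mapsto g_1g_2$, or $g_2\mapsto g_1g_2g_1^{-1}$ (chosen by the relative orientation of $\gamma_1,\gamma_2$) strictly decreases the partial sum $l(g_1)+l(g_2)+l(g_1g_2)+l(g_1g_2^{-1})$ by at least $l(g_1)$. The only remaining terms of $L(X)$ that change are $l(g_2g_3)$ and $l(g_2g_3^{-1})$. Since the axis of the replaced element is at bounded distance from $\gamma_2$ (it is shifted along $\gamma_1$ by at most $l(g_1)$ in one direction), I would use \Cref{overlap} to show that among the candidate replacements, at least one keeps the net change in these two terms strictly below $l(g_1)$. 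In Case (B), I would instead use a conjugation of $g_3$ by $g_1$, which shifts $u$ to $g_1\cdot u$ on $\gamma_1$: choosing the orientation of $g_1$ so that $g_1\cdot u$ moves toward (and possibly into) $I$ brings $\gamma_3$ closer to both $\gamma_1$ and $\gamma_2$, and a case-by-case application of \Cref{overlap} to the altered terms $l(g_1g_3^{\pm 1})$ and $l(g_2g_3^{\pm 1})$ yields the required strict decrease, while $l(g_1g_2)$ and $l(g_1g_2^{-1})$ are unchanged.

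The main obstacle will be the bookkeeping, especially in Case (B): the correct choice of replacement depends on the orientation of $g_1$ relative to $u$, the position of $u$ relative to the endpoints of $I$, and the direction of $g_2$ on $\gamma_2$, producing a finite but nontrivial sub-case analysis. The degenerate sub-case $(3)(iii)$ of \Cref{overlap} — where the relevant products can be elliptic or have unexpectedly small translation length — will also require separate treatment, and I would expect to appeal to the figures in \Cref{app} throughout.
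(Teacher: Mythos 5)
Your opening reduction is sound and agrees with what the paper does implicitly: the two intersecting axes form a connected subtree disjoint from the third, so both projections onto the third axis coincide in a single vertex and Ping Pong can only fail on one of the intersecting pair. Your Case (B) also works essentially as written: after the legitimate WLOG that $g_1$ translates $u$ towards $I$ (inverting a generator preserves $L$ and minimality), the replacement $g_3\mapsto g_1g_3g_1^{-1}$ fixes every term of $L(X)$ except $l(g_2g_3)$ and $l(g_2g_3^{-1})$, and since $d(g_1u,I)<d(u,I)$ one gets a strict decrease from \Cref{overlap}$(1)$. (One slip: the conjugation does not bring $\gamma_3$ closer to $\gamma_1$ --- that distance is preserved --- but this is harmless because the corresponding terms are unchanged. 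This sub-case is a mild reorganisation of the paper's first sub-case, which instead conjugates one member of the intersecting pair by the other.)

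The genuine gap is Case (A), which is where the paper spends almost all of its effort on this lemma. Once the block $l(g_1)+l(g_2)+l(g_1g_2)+l(g_1g_2^{-1})$ has dropped by $l(g_1)$, you must show that the two remaining affected terms $l(g_2g_3)$ and $l(g_2g_3^{-1})$ increase by a total of strictly less than $l(g_1)$. You assert that ``at least one'' of three candidate replacements achieves this because the new axis is ``at bounded distance from'' the old one, but the crude estimate from \Cref{overlap}$(1)$ only bounds the increase in each term by $l(g_1)$ (the axis of $g_2g_1^{-1}$ can recede from $\gamma_3$ by up to $l(g_1)$ near an end of the overlap, while $l(g_2g_1^{-1})$ drops by only $l(g_1)$), i.e.\ a total increase of up to $2l(g_1)$, which swamps the gain. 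Closing this is exactly the content of the paper's argument: one must locate the axes of the products $g_2g_3$ and $g_2^{-1}g_3$ relative to $\gamma_1$, split according to whether the bridge point of the disjoint axis lies inside or outside the overlap (and, when inside, how it bisects it --- the quantity $\delta_3$ and the two diagrams of \Cref{case2}), and separately handle the degenerate situations where these products are elliptic or fall into case $(3)(iii)$ of \Cref{overlap}. \Cref{notsum,notsingle} show that plausible-looking shortcuts here genuinely fail, so ``one of the candidates must work'' cannot be taken on faith; as it stands, Case (A) is a plan rather than a proof.
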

\begin{proof}
Without loss of generality, we suppose that $\gamma_1$ and $\gamma_3$ are the only axes which intersect, and that they intersect with the same orientation. We consider two cases, depending on whether or not the vertices ${\rm Proj}_{\gamma_1}(\gamma_2)$ and ${\rm Proj}_{\gamma_3}(\gamma_2)$ coincide.

In the first case, suppose that ${\rm Proj}_{\gamma_1}(\gamma_2) \neq {\rm Proj}_{\gamma_3}(\gamma_2)$. We may therefore assume (after interchanging the roles of $g_1$ and $g_3$, if necessary) that ${\rm Proj}_{\gamma_3}(\gamma_2)$ does not lie on $\gamma_1 \cap \gamma_3$, and that $g_3$ translates $\gamma_1$ towards $\gamma_2$. Let $\Delta_3$ denote the length of the shortest subpath of $\gamma_3$ containing both ${\rm Proj}_{\gamma_3}(\gamma_1)=\gamma_1 \cap \gamma_3$ and ${\rm Proj}_{\gamma_3}(\gamma_2)$. Since $X$ does not satisfy the hypotheses of the Ping Pong Lemma, either $\Delta_3 \ge l(g_3)$, or $\Delta_3 < l(g_3)$ and $\Delta(\gamma_1,\gamma_3)\ge l(g_1)$.

If $\Delta_3 \ge l(g_3)$, then consider the replacement $g_1 \mapsto g_3g_1g_3^{-1}$, which changes the terms of $L(X)$ as in (\ref{1to313}). Since the axis $g_3\cdot \gamma_1$ is strictly closer to $\gamma_2$ than $\gamma_1$, this shows (as in the proof of \Cref{3noint}) that $X$ is not minimal.

If $\Delta_3 < l(g_3)$ and $\Delta(\gamma_1,\gamma_3)\ge l(g_1)$, then consider the product replacement $g_3 \mapsto g_3g_1^{-1}$. This preserves all terms in the expression of $L(X)$ except for the following:
\begin{align}\label{3to31}
l(g_1g_3) &\mapsto l(g_3) \mapsto l(g_1g_3^{-1}) \mapsto l(g_1^2g_3^{-1})  \notag \\
l(g_2g_3)  &\mapsto l(g_1^{-1}g_2g_3)  \\
l(g_2g_3^{-1})  &\mapsto l(g_1^{-1}g_2^{-1}g_3). \notag 
\end{align}

Since $g_1^2$ has the same axis as $g_1$ but twice the translation length, 
\begin{align*}
l(g_1^2g_3^{-1}) & \le l(g_1g_3)+l(g_1)-2\Delta(\gamma_1,\gamma_3) \\
& \le l(g_1g_3)-l(g_1)
\end{align*}
by \Cref{overlap}. The axes of $g_2g_3$ and $g_2^{-1}g_3$ both intersect the axis of $g_1^{-1}$ with opposite orientations along a subpath of length $\Delta(\gamma_1,\gamma_3) \ge l(g_1)$; see the upper diagram of \Cref{case2}. Hence \Cref{overlap} also shows that $l(g_1^{-1}g_2g_3) \le l(g_2g_3)-l(g_1)$ and $l(g_1^{-1}g_2^{-1}g_3) \le l(g_2g_3^{-1})-l(g_1)$. Thus $L(X^{1}_{\{\},\{3\}})<L(X)$ and $X$ is not minimal.

In the second case, suppose that ${\rm Proj}_{\gamma_1}(\gamma_2)={\rm Proj}_{\gamma_3}(\gamma_2)$. This vertex bisects $\gamma_1 \cap \gamma_3$ into two subpaths, and we may assume that $g_1$ and $g_3$ both translate the longer of these subpaths (of length which we will denote by $\delta_3$) towards the shorter one. Without loss of generality, we may assume that $l(g_1) \le l(g_3)$. Since $X$ does not satisfy the hypotheses of the Ping Pong Lemma, $\Delta(\gamma_1,\gamma_3) \ge l(g_1)$ and hence $\delta_3 \ge \frac{l(g_1)}{2}$.

Consider the product replacement $g_3 \mapsto g_3g_1^{-1}$, which changes the terms of $L(X)$ as in (\ref{3to31}). As in the previous case, $l(g_1^2g_3^{-1}) \le l(g_1g_3)-l(g_1)$. If $l(g_3)>\delta_3$, then the axes of $g_2g_3$ and $g_2^{-1}g_3$ both intersect the axis of $g_1^{-1}$ with opposite orientations along a subpath of length $\delta_3$; see the lower diagram of \Cref{case2}. 

\begin{figure}[h]

\centering
\begin{tikzpicture}
  [scale=0.8,auto=left] 
  
  \draw[dashed,>->] (8,0) to (18,0); \node at (7.5,0) {$\gamma_3$}; \node at (18.5,0) {$\gamma_3$};
\draw[dashed,->] (14,0) to (14,3); \node at (14,3.5) {$\gamma_1$}; \draw[dashed,>-] (11,3) to (11,0); \node at (11,3.5) {$\gamma_1$};
\draw[dashed,>->] (19,2) to (16,2) to (16,5);  \node at (16,5.5) {$\gamma_2$}; \node at (19.5,2) {$\gamma_2$};
 \node[circle,inner sep=0pt,minimum size=3,fill=black] (1) at (11, 0) {}; 
  \node[circle,inner sep=0pt,minimum size=3,fill=black] (1) at (14, 0) {}; \node at (15.7,0.3) {$x$};
\node[circle,inner sep=0pt,minimum size=3,fill=black] (1) at (16, 0) {}; \draw[dashed] (16,0) to (16,2);
 \node[circle,inner sep=0pt,minimum size=3,fill=black] (1) at (16, 2) {}; \node at (15.7,1.95) {$y$};
\draw[dotted,|-|] (11,-0.3) to (14,-0.3); \node at (12.5,-0.8) {$\Delta(\gamma_1,\gamma_3) \ge l(g_1)$};
\draw[dotted,|-|] (11,-1.3) to (16,-1.3); \node at (13.5,-1.7) {$\Delta_3<l(g_3)$};

 \node[circle,inner sep=0pt,minimum size=3,fill=black] (1) at (10, 0) {}; \node at (10,0.5) {$g_3^{-1}x$};
    \draw[dashed, >-] (8.5,-3) to (8.5,-1.5) to (10,0)   ; \node at (8.5,-3.5) {${\rm Axis}(g_2^{-1}g_3)$};
        \draw[dashed, >-] (7,-1.5) to (8.5,-1.5) ; \node at (5.6,-1.5) {${\rm Axis}(g_2g_3)$};
  \node[circle,inner sep=0pt,minimum size=3,fill=black] (1) at (8.5,-1.5) {}; \node at (9.2,-1.5) {$g_3^{-1}y$};

 \node[circle,inner sep=0pt,minimum size=3,fill=black] (1) at (17, 2) {}; \node at (17,1.6) {$g_2^{-1}y$};
  \draw[dashed, ->] (17,2) to (19,4); \node at (20.4,4.3) {${\rm Axis}(g_2^{-1}g_3)$};
   \node[circle,inner sep=0pt,minimum size=3,fill=black] (1) at (18,3) {}; \node at (18.7,3) {$g_2^{-1}x$};  
   
    \node[circle,inner sep=0pt,minimum size=3,fill=black] (1) at (16, 3) {}; \node at (15.5,3) {$g_2y$};
  \draw[dashed, ->] (16,3) to (18,5); \node at (19.4,5.3) {${\rm Axis}(g_2g_3)$};
   \node[circle,inner sep=0pt,minimum size=3,fill=black] (1) at (17,4) {}; \node at (17.5,3.9) {$g_2x$};  

\end{tikzpicture}

\begin{tikzpicture}
  [scale=0.8,auto=left] 

\draw[dashed,>->] (-1,0) to (8,0); \node at (-1.5,0) {$\gamma_3$}; \node at (8.5,0) {$\gamma_3$};
\draw[dashed,->] (6,0) to (6,2.5); \node at (6,3) {$\gamma_1$}; \draw[dashed,>-] (2,2.5) to (2,0); \node at (2,3) {$\gamma_1$};
\draw[dashed,>->] (6.5,5) to (4.5,3) to (2.5,5);  \node at (2.2,5.3) {$\gamma_2$}; \node at (6.8,5.3) {$\gamma_2$};
 \node[circle,inner sep=0pt,minimum size=3,fill=black] (1) at (2, 0) {}; 
  \node[circle,inner sep=0pt,minimum size=3,fill=black] (1) at (4.5, 0) {}; \draw[dashed] (4.5,0) to (4.5,3); \node at (4.8,0.3) {$x$};
\node[circle,inner sep=0pt,minimum size=3,fill=black] (1) at (6, 0) {}; 
 \node[circle,inner sep=0pt,minimum size=3,fill=black] (1) at (4.5, 3) {}; \node at (4.8,2.9) {$y$};
\draw[dotted,|-|] (2,-0.3) to (6,-0.3); \node at (4,-0.8) {$\Delta(\gamma_1,\gamma_3)\ge l(g_1)$};
\draw[dotted,|-|] (2,-1.3) to (4.5,-1.3); \node at (3.3,-1.8) {$\delta_3<l(g_3)$};

 \node[circle,inner sep=0pt,minimum size=3,fill=black] (1) at (1, 0) {}; \node at (1,0.5) {$g_3^{-1}x$};
    \draw[dashed, >-] (-0.5,-3) to (-0.5,-1.5) to (1,0); \node at (-0.5,-3.5) {${\rm Axis}(g_2^{-1}g_3)$};
        \draw[dashed, >-]  (-2,-1.5) to (-0.5,-1.5); \node at (-3.4,-1.5) {${\rm Axis}(g_2g_3)$};
  \node[circle,inner sep=0pt,minimum size=3,fill=black] (1) at (-0.5,-1.5) {}; \node at (0.2,-1.5) {$g_3^{-1}y$};
   \node[circle,inner sep=0pt,minimum size=3,fill=black] (1) at (5.5, 4) {}; \node at (6.2, 4) {$g_2^{-1}y$};
    \node[circle,inner sep=0pt,minimum size=3,fill=black] (1) at (5.5, 6) {}; \node at (6.2, 6) {$g_2^{-1}x$};
   \draw[dashed,->]  (5.5,4) to (5.5,7); \node at (5.75,7.5) {${\rm Axis}(g_2^{-1}g_3)$};
   
      \node[circle,inner sep=0pt,minimum size=3,fill=black] (1) at (3.5, 4) {}; \node at (2.9, 3.9) {$g_2y$};
    \node[circle,inner sep=0pt,minimum size=3,fill=black] (1) at (3.5, 6) {}; \node at (2.9, 6) {$g_2x$};
   \draw[dashed,->] (3.5,4) to (3.5,7); \node at (3.25,7.5) {${\rm Axis}(g_2g_3)$};

\end{tikzpicture}

\caption{Exactly one pair of the axes $\gamma_1,\gamma_2$ and $\gamma_3$ intersects.} \label{case2}
\end{figure}
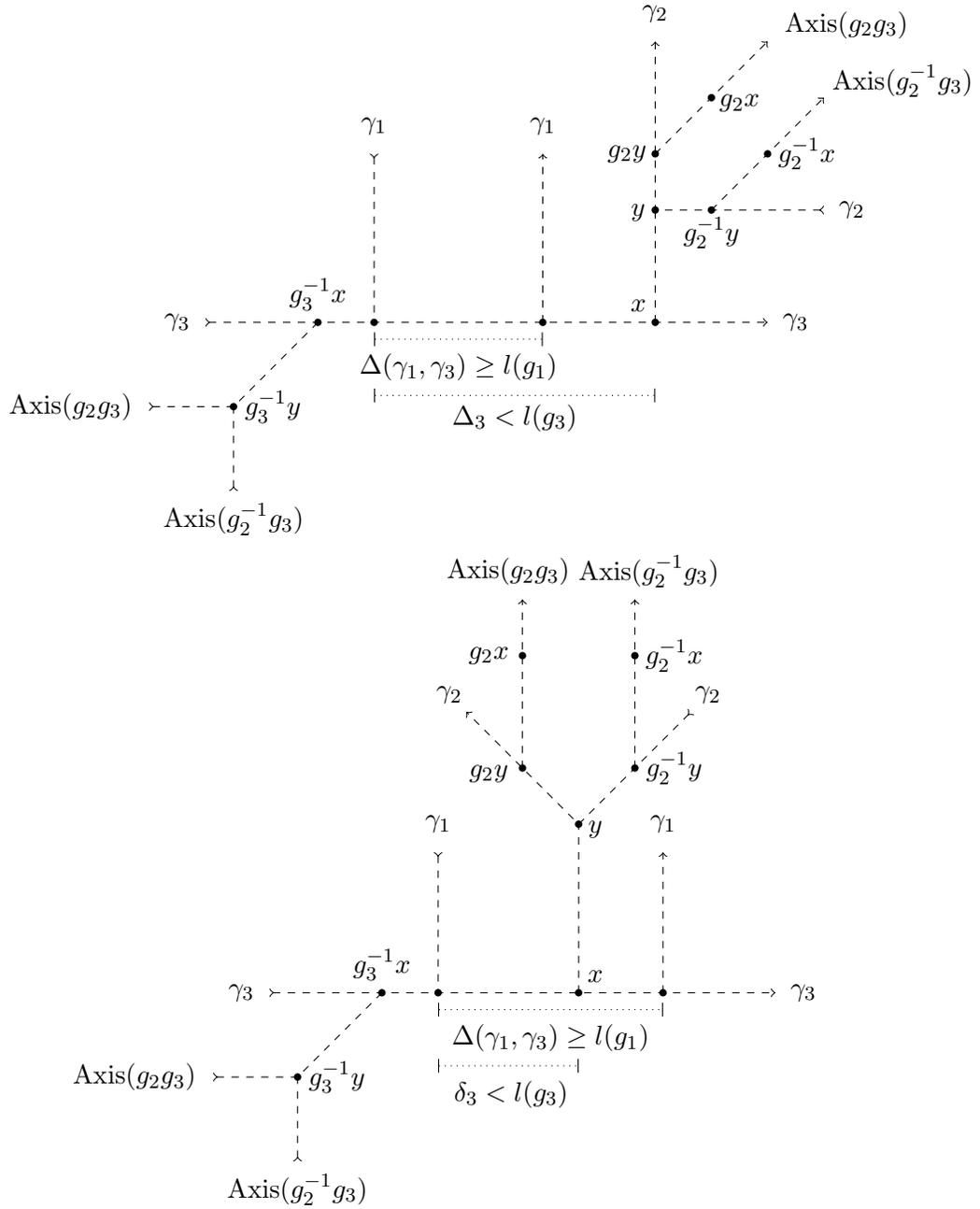

\Cref{overlap} $(3)$ shows that
\begin{align*}
l(g_1^{-1}g_2g_3) &\le l(g_2g_3)+l(g_1)-2\delta_3 \\
& \le l(g_2g_3)
\end{align*}
and, similarly, $l(g_1^{-1}g_2^{-1}g_3) \le l(g_2g_3^{-1})$. Hence $L(X^{1}_{\{\},\{3\}})<L(X)$ and $X$ is not minimal. 

On the other hand, if $l(g_3) \le \delta_3$, then the axes of $g_2g_3$ and $g_2^{-1}g_3$ both intersect the axis of $g_1^{-1}$ with opposite orientations along a subpath of length at least $l(g_3) \ge l(g_1)$. \Cref{overlap} shows that $l(g_1^{-1}g_2g_3) \le l(g_2g_3)-l(g_1)$ and $l(g_1^{-1}g_2^{-1}g_3)  \le l(g_2g_3^{-1})-l(g_1)$, so again $X$ is not minimal.
\end{proof}

\begin{lem}\label{3twopairs}
Let $X=(g_1,g_2,g_3)$ be a triple of hyperbolic isometries of a simplicial tree such that exactly two pairs of the axes $\gamma_1, \gamma_2, \gamma_3$ intersect. If $X$ does not satisfy the hypotheses of the Ping Pong Lemma, then it is not minimal.
\end{lem}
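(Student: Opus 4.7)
Without loss of generality, relabel so that $\gamma_1 \cap \gamma_2$ and $\gamma_2 \cap \gamma_3$ are the two non-empty intersections, while $\gamma_1 \cap \gamma_3 = \varnothing$. I would first pin down the geometry. Any shared point of $\gamma_1 \cap \gamma_2$ and $\gamma_2 \cap \gamma_3$ would automatically lie in $\gamma_1 \cap \gamma_3$, contradicting our hypothesis, so these are two disjoint subpaths of $\gamma_2$ separated by a gap of length $\Delta_2 \ge 1$. It follows that ${\rm Proj}_{\gamma_1}(\gamma_3)$ is a single vertex at the endpoint of $\gamma_1 \cap \gamma_2$ closest to $\gamma_2 \cap \gamma_3$, so ${\rm Proj}_{\gamma_1}(\gamma_2) \cup {\rm Proj}_{\gamma_1}(\gamma_3) = \gamma_1 \cap \gamma_2$; a symmetric statement holds on $\gamma_3$, while on $\gamma_2$ the union of projections is a single subpath of length $\Delta(\gamma_1,\gamma_2) + \Delta_2 + \Delta(\gamma_2,\gamma_3)$. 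Hence failure of the Ping Pong Lemma reduces to at least one of
\[
(A):\ \Delta(\gamma_1, \gamma_2) \ge l(g_1), \qquad (B):\ \Delta(\gamma_2, \gamma_3) \ge l(g_3),
\]
\[
(C):\ \Delta(\gamma_1, \gamma_2) + \Delta_2 + \Delta(\gamma_2, \gamma_3) \ge l(g_2).
\]

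If $(A)$ holds (or $(B)$, by symmetry), then the pair $(g_1, g_2)$ already fails the two-generator Ping Pong condition, and the argument of \Cref{2} supplies a product replacement of $g_1$ or $g_2$ --- chosen according to the orientation of $\gamma_1 \cap \gamma_2$ along $\gamma_2$ and to whether $l(g_1) \le l(g_2)$ --- that strictly decreases $l(g_1) + l(g_2) + l(g_1g_2) + l(g_1g_2^{-1})$. What remains is to verify that this same replacement does not raise the terms of $L(X)$ involving $g_3$. For the representative replacement $g_2 \mapsto g_2 g_1^{-1}$, I would apply \Cref{overlap} to $(g_2 g_1^{-1}, g_3^{\pm 1})$: the new axis of $g_2 g_1^{-1}$ is either a subpath of $\gamma_2$ or differs from $\gamma_2$ only outside a neighbourhood of $\gamma_1 \cap \gamma_2$, and in either situation its interaction with $\gamma_3$ is controlled by $\Delta(\gamma_2, \gamma_3)$ and $\Delta_2$. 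Diagrams analogous to \Cref{case2} should keep this bookkeeping transparent.

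The substantive work is the \emph{pure middle} case, when only $(C)$ holds, so that $\Delta(\gamma_1, \gamma_2) < l(g_1)$ and $\Delta(\gamma_2, \gamma_3) < l(g_3)$ but both contribute to the excessive projection on $\gamma_2$. Here the natural replacements are of the form $g_3 \mapsto g_3 g_1^{\pm 1}$ (or the symmetric $g_1 \mapsto g_3^{\pm 1} g_1$), with signs and sides chosen so that the new axis runs through the intervening segment of length $\Delta_2$. By \Cref{overlap}, $l(g_3 g_1^{\pm 1})$ then drops by essentially $2\Delta_2$ compared with $l(g_1) + l(g_3)$, while the terms $l(g_2 (g_3 g_1^{\pm 1})^{\pm 1})$ remain controlled by hypothesis $(C)$. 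The main obstacle is the combinatorial proliferation of subcases: $\gamma_1 \cap \gamma_2$ and $\gamma_2 \cap \gamma_3$ may each have the same or opposite orientation as $\gamma_2$, yielding four distinct configurations, each requiring a bespoke replacement and a separate verification via \Cref{overlap} of all six mixed translation lengths. Once the correct replacement is identified in each subcase, only routine inequality-chasing should remain.
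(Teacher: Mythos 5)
Your reduction of the failure of the Ping Pong Lemma to the three conditions $(A)$, $(B)$, $(C)$ is correct, but both halves of your plan break down. In the ``pure middle'' case $(C)$ the proposed replacement $g_3 \mapsto g_3 g_1^{\pm 1}$ cannot do what you claim: in your labelling $\gamma_1 \cap \gamma_3 = \varnothing$, so \Cref{overlap}$(1)$ gives $l(g_3 g_1^{\pm 1}) = l(g_1) + l(g_3) + 2d(\gamma_1,\gamma_3) = l(g_1)+l(g_3)+2\Delta_2$, an \emph{increase} by $2\Delta_2$ rather than a drop --- you have applied the opposite-orientation overlap formula to a pair of disjoint axes. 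The move that actually works here (and is what the paper uses, exactly as in \Cref{3noint}) is to conjugate an outer generator by the middle one, e.g.\ $g_1 \mapsto g_2 g_1 g_2^{-1}$: condition $(C)$ guarantees $l(g_2) \le \Delta(\gamma_1,\gamma_2)+\Delta_2+\Delta(\gamma_2,\gamma_3)$, so the translated axis $g_2\cdot\gamma_1$ is strictly closer to $\gamma_3$, the two terms $l(g_1g_3^{\pm1})$ strictly decrease, and every other term of $L(X)$ is preserved.

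In cases $(A)$/$(B)$ the verification you defer is not routine bookkeeping; it is where the entire difficulty of this lemma lives. After the single replacement inherited from \Cref{2} (say $g_2 \mapsto g_2g_1^{-1}$ when $\Delta(\gamma_1,\gamma_2)\ge l(g_1)$), the cross terms $l(g_2g_1^{-1}g_3^{\pm1})$ are controlled only when the axis of the relevant product meets $\gamma_1$, and this fails precisely in the boundary subcase where the other outer overlap equals the corresponding translation length (in the paper's labelling, $\Delta(\gamma_2,\gamma_3) = l(g_2)$). There the paper abandons single Nielsen moves entirely and uses the \emph{simultaneous} double replacement $g_1 \mapsto g_1g_2^{-1}$, $g_3 \mapsto g_3g_2^{-1}$, which in turn requires locating the axis or fixed vertex of $g_2^{-1}g_3$ via the secondary overlap $\Delta(\gamma_3, g_2\cdot\gamma_3)$, \Cref{ellfix} and \cite[Proposition 1.7]{P}, with separate hyperbolic and elliptic cases. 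This is the phenomenon that forces the definition of minimality to allow multi-element replacements in the first place (cf.\ \Cref{notsingle}), so a plan resting on single replacements plus ``routine inequality-chasing'' has a genuine gap at exactly this point.
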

\begin{proof}
Without loss of generality, we may assume that $\gamma_3$ intersects both $\gamma_1$ and $\gamma_2$ with the same orientation, and that $g_3$ translates $\gamma_1$ towards $\gamma_2$. Let $\Delta_3$ denote the length of the shortest subpath of $\gamma_3$ containing both ${\rm Proj}_{\gamma_3}(\gamma_1)=\gamma_1 \cap \gamma_3$ and ${\rm Proj}_{\gamma_3}(\gamma_2)=\gamma_2 \cap \gamma_3$. 

If $l(g_3)<\Delta_3+d(\gamma_1,\gamma_2)$, then consider the replacement $g_1 \mapsto g_3g_1g_3^{-1}$, which changes the terms of $L(X)$ as in (\ref{1to313}). Since the axis $g_3\cdot \gamma_1$ is strictly closer to $\gamma_2$ than $\gamma_1$ (and possibly even intersects $\gamma_2$), it follows as in the proof of \Cref{3noint} that $X$ is not minimal. Hence we may suppose that $l(g_3) \ge \Delta_3+d(\gamma_1,\gamma_2)$. In particular, $\Delta_3<l(g_3)$ and, since $X$ does not satisfy the hypotheses of the Ping Pong Lemma, we may additionally suppose (after interchanging the roles of $g_1$ and $g_2$, if necessary) that $\Delta(\gamma_1,\gamma_3) \ge l(g_1)$.

We first consider the product replacement $g_3 \mapsto g_3g_1^{-1}$, which changes the terms of $L(X)$ as in (\ref{3to31}). As before, $l(g_1^2g_3^{-1}) \le l(g_1g_3)-l(g_1)$. The axis of $g_2g_3$ intersects the axis of $g_1^{-1}$ with opposite orientations along a subpath of length $\Delta(\gamma_1,\gamma_3)$; see the upper diagram of \Cref{case3}. \Cref{overlap} hence shows that 
\begin{align*}
l(g_1^{-1}g_2g_3) &= l(g_2g_3)+l(g_1)-2\Delta(\gamma_1,\gamma_3) \\
&\le l(g_2g_3)-l(g_1).
\end{align*}

If $\Delta(\gamma_2,\gamma_3) < l(g_2)$, then the axis of $g_2^{-1}g_3$ intersects $\gamma_1$; see the upper diagram of \Cref{case3}. Similarly, if $\Delta(\gamma_2,\gamma_3) > l(g_2)$, then the axis of $g_2^{-1}g_3$ also intersects $\gamma_1$; see \Cref{bigint}. In either case, \Cref{overlap} shows that $l(g_1^{-1}g_2^{-1}g_3) \le l(g_2g_3^{-1})+l(g_1)$ and hence $L(X^{1}_{\{\},\{3\}})<L(X)$, so $X$ is not minimal. Thus we may assume that $\Delta(\gamma_2,\gamma_3) = l(g_2)$.

By \Cref{overlap}, $l(g_2^{-1}g_3)=\max\{0,l(g_3)-l(g_2)-2\Delta'\}$, where $\Delta'$ denotes the length of the path $\gamma_3 \cap g_2 \cdot \gamma_3$. Consider the product replacements $g_1 \mapsto g_1g_2^{-1}$ and $g_3 \mapsto g_3g_2^{-1}$, which preserve all terms in the expression of $L(X)$ except for the following:
\begin{align}\label{1to12and3to32}
l(g_1g_2) &\mapsto l(g_1) \mapsto l(g_1g_2^{-1}) \mapsto l(g_1g_2^{-2})  \notag \\
l(g_2g_3) &\mapsto l(g_3) \mapsto l(g_2g_3^{-1}) \mapsto l(g_2^2g_3^{-1})   \\
l(g_1g_3)  &\mapsto l(g_1g_2^{-1}g_3g_2^{-1})=l(g_2^{-1}g_3g_2^{-1}g_1). \notag 
\end{align}
Since $g_2^2$ has the same axis as $g_2$ but twice the translation length, \Cref{overlap} shows that $l(g_1g_2^{-2})=l(g_1g_2)+l(g_2)$ and $l(g_2^2g_3^{-1}) \le l(g_2g_3)-l(g_2).$

\begin{figure}[h!]
\centering

\begin{tikzpicture}
  [scale=0.8,auto=left] 

\draw[dashed,>->] (-2.5,0) to (10,0); \node at (-3,0) {$\gamma_3$}; \node at (10.5,0) {$\gamma_3$};
\draw[dashed,->] (4,0) to (4,2.5); \node at (4,3) {$\gamma_1$}; \draw[dashed,>-] (2,2.5) to (2,0); \node at (2,3) {$\gamma_1$};
\draw[dashed,->] (8,0) to (8,-4.5); \node at (8,-5) {$\gamma_2$}; \draw[dashed,>-] (6,-4.5) to (6,0); \node at (6,-5) {$\gamma_2$};
 \node[circle,inner sep=0pt,minimum size=3,fill=black] (1) at (2, 0) {}; 
  \node[circle,inner sep=0pt,minimum size=3,fill=black] (1) at (4, 0) {}; \node at (8,0.4) {$q$};
\node[circle,inner sep=0pt,minimum size=3,fill=black] (1) at (6, 0) {}; \node at (6,0.4) {$p$};
\node[circle,inner sep=0pt,minimum size=3,fill=black] (1) at (8, 0) {};
\draw[dotted,|-|] (2,-1.7) to (4,-1.7); \node at (3,-2.3) {$\Delta(\gamma_1,\gamma_3) \ge l(g_1)$};
\draw[dotted,|-|] (4.1,1) to (5.9,1); \node at (5,1.5) {$d(\gamma_1,\gamma_2)$};
\draw[dotted,|-|] (6,1) to (8,1); \node at (7.9,1.5) {$\Delta(\gamma_2,\gamma_3) < l(g_2)$};
\draw[dotted,|-|] (2,-0.5) to (7.9,-0.5); \node at (4.5,-1) {$\Delta_3<l(g_3)$};

 \node[circle,inner sep=0pt,minimum size=3,fill=black] (1) at (1, 0) {}; \node at (1,0.5) {$g_3^{-1}q$};
  \node[circle,inner sep=0pt,minimum size=3,fill=black] (1) at (-1, 0) {}; \node at (-1,-0.5) {$g_3^{-1}p$};
  
 \node[circle,inner sep=0pt,minimum size=3,fill=black] (1) at (8, -3) {}; \node at (7.5,-3) {$g_2q$};
  \node[circle,inner sep=0pt,minimum size=3,fill=black] (1) at (6, -1.5) {}; \node at (6.6,-1.5) {$g_2^{-1}q$};

 \draw[dashed, ->] (8,-3) to (9.5,-4.5); \node at (10.7,-4.8) {${\rm Axis}(g_2g_3)$};
 \draw[dashed, >-] (-2.5,1.5) to (-1,0); \node at (-3.7,1.8) {${\rm Axis}(g_2g_3)$};

  \draw[dashed, ->] (6,-1.5) to (4.5,-3); \node at (4,-3.5) {${\rm Axis}(g_2^{-1}g_3)$};  
  \draw[dashed, >-] (-0.5,-1.5) to (1,0); \node at (-1.7,-1.8) {${\rm Axis}(g_2^{-1}g_3)$};

\end{tikzpicture}

\begin{tikzpicture}
  [scale=0.8,auto=left] 

\draw[dashed,>->] (-3,0) to (9.5,0); \node at (-3.5,0) {$\gamma_3$}; \node at (10,0) {$\gamma_3$};
\draw[dashed,->] (4,0) to (4,3); \node at (4,3.5) {$\gamma_1$}; \draw[dashed,>-] (1,3) to (1,0); \node at (1,3.5) {$\gamma_1$};
\draw[dashed,->] (8,0) to (8,-3.5); \node at (8,-4) {$\gamma_2$}; \draw[dashed,>-] (6,-3.5) to (6,0); \node at (6,-4) {$\gamma_2$};
 \node[circle,inner sep=0pt,minimum size=3,fill=black] (1) at (1, 0) {}; 
    \node[circle,inner sep=0pt,minimum size=3,fill=black] (1) at (4, 0) {}; \node at (4.3,0.3) {$x$};
\node[circle,inner sep=0pt,minimum size=3,fill=black] (1) at (6, 0) {}; \node at (6,0.4) {$p$};
\node[circle,inner sep=0pt,minimum size=3,fill=black] (1) at (8, 0) {}; \node at (8,0.4) {$q$};
\draw[dotted,|-|] (1,-2.6) to (4,-2.6); \node at (2.5,-3.2) {$\Delta(\gamma_1,\gamma_3) \ge l(g_1)$};
\draw[dotted,|-|] (-0.5,-0.5) to (5.9,-0.5); \node at (2.75,-1) {$\Delta'>\Delta(\gamma_1,\gamma_3)+d(\gamma_1,\gamma_2)$};
\draw[dotted,|-|] (6.1,1) to (8,1); \node at (7.9,1.5) {$\Delta(\gamma_2,\gamma_3) = l(g_2)$};
\draw[dotted,|-|] (4.1,1) to (5.9,1); \node at (5,1.5) {$d(\gamma_1,\gamma_2)$};
\draw[dotted,|-|] (1,-4.5) to (8,-4.5); \node at (4.5,-5) {$\Delta_3<l(g_3)$};

   \node[circle,inner sep=0pt,minimum size=3,fill=black] (1) at (-0.5, 0) {}; \node at (-0.5,0.3) {$r$}; 
     \node[circle,inner sep=0pt,minimum size=3,fill=black] (1) at (-1.5, 0) {}; \node at (-2,-0.4) {$g_3^{-1}g_2r$}; 
 
  \node[circle,inner sep=0pt,minimum size=3,fill=black] (1) at (2, 0) {}; \node at (2.7,0.5) {$g_1^{-1}x$};     
        \node[circle,inner sep=0pt,minimum size=3,fill=black] (1) at (6, -2) {}; \node at (6.7,-2) {$g_2^{-1}p$};

    \draw[dashed, >-] (2,2) to (2,0); \node at (2.5,2.5) {${\rm Axis}(g_2^{-1}g_1)$}; 
        \draw[dashed, ->] (6,-2) to (4.5,-3.5); \node at (4,-4) {${\rm Axis}(g_2^{-1}g_1)$}; 
  
    \draw[dashed, >-] (-3,1.5) to (-1.5,0); \node at (-3.5,2) {${\rm Axis}(g_2^{-1}g_3)$}; 
        \draw[dashed, ->] (-0.5,0) to (-2,-1.5); \node at (-2.5,-2) {${\rm Axis}(g_2^{-1}g_3)$};

\end{tikzpicture}

\caption{Exactly two pairs of the axes $\gamma_1, \gamma_2$ and $\gamma_3$ intersect.} \label{case3}
\end{figure}

Let us first suppose that $g_2^{-1}g_3$ is hyperbolic. If the axis of $g_2^{-1}g_3$ intersects $\gamma_1$, then $X$ is not minimal by the above argument. So suppose that the axis of $g_2^{-1}g_3$ does not intersect $\gamma_1$. If $\Delta' \le \Delta(\gamma_1,\gamma_3)+d(\gamma_1,\gamma_2)$, then $l(g_3)-l(g_2)+\Delta'<d(\gamma_1,\gamma_2)$; see \Cref{bigint3}. This contradicts that $l(g_3) \ge \Delta_3+d(\gamma_1,\gamma_2)$, so we may suppose that $\Delta'>\Delta(\gamma_1,\gamma_3)+d(\gamma_1,\gamma_2)$; see the lower diagram of \Cref{case3}.

Observe that the axis of $g_2^{-1}g_3$ is at distance at most $\Delta'-l(g_1)-d(\gamma_1,\gamma_2)$ from the axis of $g_2^{-1}g_1$; see the lower diagram of \Cref{case3}. \Cref{overlap} shows that
\begin{align*}
l(g_2^{-1}g_3g_2^{-1}g_1)&\le l(g_2^{-1}g_3)+l(g_2^{-1}g_1)+2(\Delta'-l(g_1)-d(\gamma_1,\gamma_2)) \\
&=l(g_1g_3)-2l(g_1)
\end{align*}
and hence $L(X^2_{\{\},\{1,3\}})<L(X)$. We conclude that $X$ is not minimal if $g_2^{-1}g_3$ is hyperbolic.

On the other hand, if $g_2^{-1}g_3$ is elliptic, then \Cref{ellfix} shows that $g_2^{-1}g_3$ fixes a vertex $v$ of $\gamma_3$ at distance $\frac{l(g_3)-l(g_2)}{2}$ from the initial (with respect to translation direction) vertex $p$ of $\gamma_2 \cap \gamma_3$. If $v$ lies on the axis of $g_2^{-1}g_1$, then \cite[Proposition 1.7]{P} shows that
\begin{align*}
l(g_2^{-1}g_3g_2^{-1}g_1)&\le l(g_2^{-1}g_1) \\
&=l(g_1g_3)-l(g_3)+l(g_2)+2d(\gamma_1,\gamma_2) \\
&\le l(g_1g_3)-\Delta(\gamma_1,\gamma_3).
\end{align*}
Otherwise $v$ is at distance $\frac{l(g_3)-l(g_2)}{2}-l(g_1)-d(\gamma_1,\gamma_2)$ from the axis of $g_2^{-1}g_1$ and \cite[Proposition 1.7]{P} shows that
\begin{align*}
l(g_2^{-1}g_3g_2^{-1}g_1)&\le l(g_2^{-1}g_1)+l(g_3)-l(g_2)-2l(g_1)-2d(\gamma_1,\gamma_2) \\
&=l(g_1g_3)-2l(g_1).
\end{align*}
In either case, this shows that $X$ is not minimal.
\end{proof}

\begin{lem}\label{3threepairs1}
Let $X=(g_1,g_2,g_3)$ be a triple of hyperbolic isometries of a simplicial tree such that $\gamma_1 \cap \gamma_2, \gamma_1 \cap \gamma_3$ and $\gamma_2 \cap \gamma_3$ are non-empty, and at least two of these paths are equal. If $X$ does not satisfy the hypotheses of the Ping Pong Lemma, then it is not minimal.
\end{lem}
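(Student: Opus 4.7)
After possibly relabelling the generators, I may assume that $\gamma_1\cap\gamma_2 = \gamma_1\cap\gamma_3 = P$, so that the path $P$ lies on all three axes; writing $Q = \gamma_2\cap\gamma_3$, we have $P \subseteq Q$. Since ${\rm Proj}_{\gamma_1}(\gamma_2) = {\rm Proj}_{\gamma_1}(\gamma_3) = P$, the fact that $X$ does not satisfy the hypotheses of \Cref{PPL} forces at least one of $|P| \ge l(g_1)$, $|Q| \ge l(g_2)$, or $|Q| \ge l(g_3)$ to hold; by symmetry I will take $|P| \ge l(g_1)$ as the primary case, the others being handled in the same spirit with the roles of the axes permuted.

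The strategy will follow the template of \Cref{2}, split according to the relative orientations that $g_1, g_2, g_3$ induce on the common path $P$. After possibly replacing some $g_i$ by $g_i^{-1}$ (which preserves $L(X)$ because it only swaps $l(g_ig_j)$ with $l(g_ig_j^{-1})$), I may assume that two of the isometries, say $g_j$ and $g_k$, agree in orientation on $P$ with $l(g_j) \le l(g_k)$. By \Cref{overlap}(2) applied to this pair, $l(g_jg_k) = l(g_j)+l(g_k)$, and running the computation of \Cref{2} on $(g_j, g_k)$ yields $l(g_j^2g_k^{-1}) \le l(g_jg_k)-l(g_j)$ after the single replacement $g_k \mapsto g_kg_j^{-1}$. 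The new feature compared to \Cref{2} is that the third isometry $g_i$ contributes cross-terms $l(g_ig_k^{\pm 1})$ which also change. I would show that, because $\gamma_i\cap\gamma_j$ and $\gamma_i\cap\gamma_k$ both contain $P$ and the axis of the new element differs from $\gamma_k$ only by a translate of $l(g_j)$ along $\gamma_j$, \Cref{overlap} bounds these cross-terms by their original values, giving $L(X^j_{\{\},\{k\}}) < L(X)$.

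The main obstacle I anticipate lies in the subcase $Q \supsetneq P$, where $\gamma_2$ and $\gamma_3$ overlap beyond $P$ and the cross-terms $l(g_2g_3^{\pm 1})$ are no longer controlled just by the common path. Here the orientations of $g_2$ and $g_3$ on $Q$ interact with those on $P$ in nontrivial ways, and the relevant translation lengths fall into several sub-cases of \Cref{overlap}(3), including, potentially, an elliptic case handled via \Cref{ellfix}. When a single product replacement of the above type fails to reduce $L(X)$, I would fall back on a conjugation $g_i \mapsto g_jg_ig_j^{-1}$ (as in \Cref{3noint}) or a two-step replacement of the type used in the proof of \Cref{3twopairs}; both are permitted by the general form $X^j_{S_1,S_2}$ allowed in \Cref{min}. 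The bookkeeping will be lengthy, but the geometric rigidity imposed by the coincidence $\gamma_1\cap\gamma_2 = \gamma_1\cap\gamma_3$ should ensure that every configuration admits some such reduction.
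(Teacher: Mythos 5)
Your setup agrees with the paper's: relabelling so that one axis meets the other two along the same path $P$, observing that failure of \Cref{PPL} forces $|P|\ge l(g_1)$ or $|Q|\ge\min\{l(g_2),l(g_3)\}$, and attacking via product replacements controlled by \Cref{overlap}. However, what you have written is a plan rather than a proof, and its central step does not hold as stated. First, the three failure conditions are not interchangeable ``by symmetry'': the axis whose two intersections coincide is distinguished, so the case $|P|\ge l(g_1)$ and the case $|Q|\ge l(g_2)$ with $Q\supsetneq P$ require genuinely different replacements, and permuting the roles of the axes does not dispose of the latter.

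Second, the key assertion --- that after the single replacement $g_k\mapsto g_kg_j^{-1}$ the cross-terms $l(g_ig_k^{\pm1})$ are bounded by their original values because the new axis ``differs from $\gamma_k$ only by a translate of $l(g_j)$ along $\gamma_j$'' --- is precisely the point that needs a case analysis, and it is false in general. What must be controlled is $l(g_j^{-1}g_ig_k)$ and $l(g_j^{-1}g_i^{-1}g_k)$, i.e.\ the interaction of $\gamma_j$ with the axes of $g_ig_k$ and $g_i^{-1}g_k$. In the paper's argument one of these terms is permitted to \emph{increase} by as much as $l(g_j)$, with the net decrease recovered from the $(j,k)$-terms; and in the subcases where $\Delta(\gamma_2,\gamma_3)=\min\{l(g_2),l(g_3)\}$ and $l(g_2)\neq l(g_3)$, the element $g_2^{-1}g_3$ may be elliptic or have an axis positioned so that no single replacement of your form decreases $L(X)$ at all. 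One is then forced to the compound replacements $X^2_{\{\},\{1,3\}}$ or $X^3_{\{\},\{1,2\}}$, to locate the fixed vertex of the elliptic product via \Cref{ellfix}, and to invoke a displacement bound for products with an elliptic factor. You correctly anticipate that such subcases exist and name the right fallback tools, but you do not identify which configurations force them, which replacement to use in each, or why the resulting inequalities close; that is where essentially all of the work in this lemma lies.
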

\begin{proof}
Without loss of generality, suppose that $\gamma_1 \cap \gamma_2= \gamma_2 \cap \gamma_3$. We may further suppose that all three axes $\gamma_1,\gamma_2,\gamma_3$ intersect with the same orientation. Note that $(\gamma_1 \cap \gamma_3) \backslash (\gamma_2 \cap \gamma_3)$ is the disjoint union of two (possibly empty) subpaths and we may assume that $g_3$ translates the longer of these subpaths (of length which we will denote by $\delta_3$) towards the shorter one. Let $\Delta=\Delta(\gamma_1,\gamma_2)=\Delta(\gamma_2,\gamma_3)$, so that $\Delta(\gamma_1,\gamma_3) \le 2\delta_3+\Delta$. Suppose without loss of generality that $l(g_1) \le l(g_3)$. Since $(g_1,g_2,g_3)$ do not satisfy the hypotheses of the Ping Pong Lemma, either $\Delta(\gamma_1,\gamma_3) \ge l(g_1)$, or otherwise $\Delta(\gamma_1,\gamma_3) < l(g_1)$ and $\Delta \ge l(g_2)$.

Let us first suppose that $\Delta(\gamma_1,\gamma_3) \ge l(g_1)$. Consider the product replacement $g_3 \mapsto g_3g_1^{-1}$, which changes the terms of $L(X)$ as in (\ref{3to31}). As before, $l(g_1^2g_3^{-1})\le l(g_1g_3)-l(g_1)$. Observe that the axis of $g_2g_3$ intersects the axis of $g_1^{-1}$ with opposite orientations along a subpath of length either $\Delta+\delta_3$ (if $l(g_3)>\delta_3$) or at least $\Delta+l(g_3)$ (otherwise); see the upper diagram of \Cref{case4} for the former case. If $l(g_3)>\delta_3$, then \Cref{overlap} shows that
\begin{align*}
l(g_1^{-1}g_2g_3) &\le l(g_2g_3)+l(g_1)-2(\Delta+\delta_3) \\
&\le l(g_2g_3)-\Delta.
\end{align*}
On the other hand, if $l(g_3) \le \delta_3$, then $l(g_1^{-1}g_2g_3)\le l(g_2g_3)-2\Delta-l(g_3)$ by \Cref{overlap}, and so it remains to compare $l(g_1^{-1}g_2^{-1}g_3)$ and $l(g_2g_3^{-1})$.

\begin{figure}[h]
\centering

\begin{tikzpicture}
  [scale=0.8,auto=left]

\draw[dashed,>->] (-1,0) to (10,0); \node at (-1.5,0) {$\gamma_3$}; \node at (10.5,0) {$\gamma_3$};
\draw[dashed,>-] (4.5,-3.5) to (4.5,0); \node at (4.5,-4) {$\gamma_2$}; \draw[dashed,>-] (2,3) to (2,0); \node at (2,3.5) {$\gamma_1$};
\draw[dashed,->] (6.5,0) to (6.5,-3.5); \node at (6.5,-4) {$\gamma_2$}; \draw[dashed,->] (8,0) to (8,3); \node at (8,3.5) {$\gamma_1$};
 \node[circle,inner sep=0pt,minimum size=3,fill=black] (1) at (2, 0) {}; 
  \node[circle,inner sep=0pt,minimum size=3,fill=black] (1) at (4.5, 0) {};\node at (4.5,0.4) {$p$};
\node[circle,inner sep=0pt,minimum size=3,fill=black] (1) at (6.5, 0) {};\node at (6.5,0.4) {$q$};
\node[circle,inner sep=0pt,minimum size=3,fill=black] (1) at (8, 0) {};
\draw[dotted,|-|] (4.6,1) to (6.4,1); \node at (5.5,1.5) {$\Delta$};
\draw[dotted,|-|] (2.1,2) to (7.9,2); \node at (5,2.5) {$\Delta(\gamma_1,\gamma_3) \ge l(g_1)$};
\draw[dotted,|-|] (2,-0.5) to (4.4,-0.5); \node at (3.2,-1) {$\delta_3<l(g_3)$};

    \node[circle,inner sep=0pt,minimum size=3,fill=black] (1) at (1, 0) {}; \node at (1,-0.5) {$g_3^{-1}p$};
 \node[circle,inner sep=0pt,minimum size=3,fill=black] (1) at (6.5,-2) {}; \node at (5.9,-2) {$g_2q$};
 \draw[dashed, ->] (6.5,-2) to (8,-3.5); \node at (9.3,-3.9) {${\rm Axis}(g_2g_3)$};
    \draw[dashed, >-] (-0.5,1.5) to (1,0); \node at (-1,2) {${\rm Axis}(g_2g_3)$};

\end{tikzpicture}

\begin{tikzpicture}
  [scale=0.8,auto=left]

\draw[dashed,>->] (-2,0) to (10,0); \node at (-2.5,0) {$\gamma_3$}; \node at (10.5,0) {$\gamma_3$};
\draw[dashed,>-] (1.5,-3) to (4.5,0); \node at (1,-3.5) {$\gamma_2$}; \draw[dashed,>-] (2,3) to (2,0); \node at (2,3.5) {$\gamma_1$};
\draw[dashed,->] (6.5,0) to (9.5,-3); \node at (10,-3.5) {$\gamma_2$}; \draw[dashed,->] (8,0) to (8,3); \node at (8,3.5) {$\gamma_1$};
 \node[circle,inner sep=0pt,minimum size=3,fill=black] (1) at (2, 0) {}; 
  \node[circle,inner sep=0pt,minimum size=3,fill=black] (1) at (4.5, 0) {}; \node at (4.5,0.3) {$p$};
\node[circle,inner sep=0pt,minimum size=3,fill=black] (1) at (6.5, 0) {};\node at (6.5,0.3) {$q$};
\node[circle,inner sep=0pt,minimum size=3,fill=black] (1) at (8, 0) {};
\draw[dotted,|-|] (4.6,0.8) to (6.4,0.8); \node at (5.5,1.3) {$\Delta=l(g_2)>l(g_1)$};
\draw[dotted,|-|] (2.1,2) to (7.9,2); \node at (5,2.5) {$\Delta(\gamma_1,\gamma_3) \ge l(g_1)$};
\draw[dotted,|-|] (2,-0.5) to (4.4,-0.5); \node at (2.7,-1) {$\delta_3$};
\draw[dotted,|-|] (1,-2) to (4.4,-2); \node at (2.75,-2.5) {$\Delta'$};

       \node[circle,inner sep=0pt,minimum size=3,fill=black] (1) at (1, 0) {}; \node at (1,0.3) {$r$}; 
     \node[circle,inner sep=0pt,minimum size=3,fill=black] (1) at (0, 0) {}; \node at (-0.5,-0.4) {$g_3^{-1}g_2r$}; 
         \draw[dashed, >-] (-1.5,1.5) to (0,0); \node at (-2,2) {${\rm Axis}(g_2^{-1}g_3)$}; 
        \draw[dashed, ->] (1,0) to (-0.5,-1.5); \node at (-1,-2) {${\rm Axis}(g_2^{-1}g_3)$}; 

 \node[circle,inner sep=0pt,minimum size=3,fill=black] (1) at (5.5, 0) {}; \node at (5.5,0.4) {$g_1^{-1}q$};
          \draw[dashed, >-] (7,-1.5) to (5.5,0); \node at (7,-2) {${\rm Axis}(g_2^{-1}g_1)$};
          
           \node[circle,inner sep=0pt,minimum size=3,fill=black] (1) at (3.25, -1.25) {}; \node at (4.25,-1.25) {$g_2^{-1}g_1p$};
          \draw[dashed, ->] (3.25,-1.25) to (3.25,-3.25); \node at (3.25,-3.75) {${\rm Axis}(g_2^{-1}g_1)$};

\end{tikzpicture}

\caption{All three pairs of axes $\gamma_1, \gamma_2, \gamma_3$ intersect, and at least two pairs intersect along the same path.} 
\label{case4}

\end{figure}
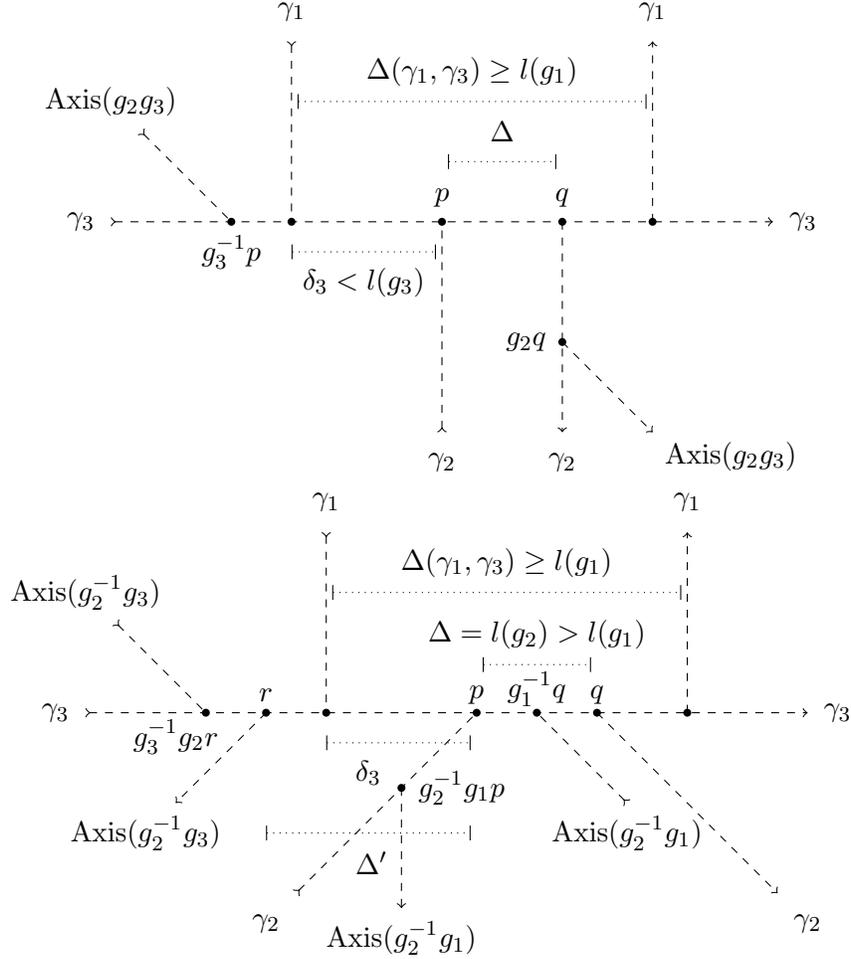

If $\Delta < \min\{l(g_2),l(g_3)\}$, then the axis of $g_2^{-1}g_3$ contains the initial vertex $p$ of $\gamma_2 \cap \gamma_3=\gamma_1\cap \gamma_2$; see \Cref{smallint}. Therefore the axes of $g_1^{-1}$ and $g_2^{-1}g_3$ intersect and $l(g_1^{-1}g_2^{-1}g_3) \le l(g_2g_3^{-1})+l(g_1)$ by \Cref{overlap}. It follows that $L(X^{1}_{\{\},\{3\}})<L(X)$, so $X$ is not minimal. 

If $\Delta > \min\{l(g_2),l(g_3)\}$, then either $g_2^{-1}g_3$ fixes $p$ (if $l(g_2)=l(g_3)$) or it is hyperbolic and its axis contains $p$ (otherwise); see \Cref{bigint}. Thus $X$ is not minimal in this case either, as \cite[Proposition 1.7]{P} (respectively \Cref{overlap}) shows that $l(g_1^{-1}g_2^{-1}g_3) \le l(g_2g_3^{-1})+l(g_1)$.

Hence we may suppose that $\Delta=\min\{l(g_2),l(g_3)\}$, and $l(g_2)\neq l(g_3)$ (as otherwise $X$ is not minimal by the above argument). Let us first assume that $\Delta=l(g_2)<l(g_3)$. Consider the product replacements $g_1 \mapsto g_1g_2^{-1}$ and $g_3 \mapsto g_3g_2^{-1}$, which change the terms of $L(X)$ as in (\ref{1to12and3to32}). As before, $l(g_2^2g_3^{-1})\le l(g_2g_3)-l(g_2)$ and, similarly, $l(g_1g_2^{-2})\le l(g_1g_2)-l(g_2)$. It remains to compare $l(g_2^{-1}g_3g_2^{-1}g_1)$ with $l(g_1g_3)$.

By \Cref{overlap}, $l(g_2^{-1}g_3)=\max\{0,l(g_3)-l(g_2)-2\Delta'\},$ where $\Delta'$ is the length of the path $\gamma_3 \cap g_2 \cdot \gamma_3$. Observe that if $g_2^{-1}g_3$ is hyperbolic, then its axis is at distance $\Delta'$ from the initial vertex $p$ of $\gamma_2\cap \gamma_3$; see the lower diagram of \Cref{case4} for the case where $\Delta'>\delta_3$. If $g_2^{-1}g_3$ is elliptic, then by \Cref{ellfix} it fixes a vertex of $\gamma_3$ at distance $\frac{l(g_3)-l(g_2)}{2}$ from $p$. In either case, $d(p,g_2^{-1}g_3p) \le l(g_3)-l(g_2)$ and hence
\begin{align*}
l(g_2^{-1}g_3g_2^{-1}g_1)&\le d(p,g_2^{-1}g_3g_2^{-1}g_1p) \\
&\le d(p,g_2^{-1}g_3p)+d(p,g_2^{-1}g_1p)\\
&\le l(g_1g_3)-l(g_1)-l(g_2)+d(p,g_2^{-1}g_1p).
\end{align*}

If $l(g_1)<l(g_2)$, then $d(p,g_2^{-1}g_1p)=l(g_2)-l(g_1)$ as the axis of $g_2^{-1}g_1$ contains $p$; see the lower diagram of \Cref{case4}. If $l(g_1)=l(g_2)$, then $g_2^{-1}g_1$ fixes $p$ and $d(p,g_2^{-1}g_1p)=0$. If $l(g_1)>l(g_2)$, then \Cref{overlap} shows that $l(g_2^{-1}g_1)=\max\{0,l(g_1)-l(g_2)-2\Delta''\}$, where $\Delta''$ is the length of the path $\gamma_1 \cap g_2 \cdot \gamma_1$. Either $g_2^{-1}g_1$ is hyperbolic with axis at distance $\Delta''$ from $p$ (see \Cref{bigint3}), or $g_2^{-1}g_1$ fixes a vertex at distance $\frac{l(g_1)-l(g_2)}{2}$ from $p$ by \Cref{ellfix}. Hence $d(p,g_2^{-1}g_1p)\le l(g_1)-l(g_2)$. In each of these three cases, this shows that $L(X^2_{\{\},\{1,3\}})<L(X)$ and so $X$ is not minimal if $\Delta= l(g_2)<l(g_3)$. 

Now suppose that $\Delta=l(g_3)<l(g_2)$. Consider the product replacements $g_1 \mapsto g_1g_3^{-1}$ and $g_2 \mapsto g_2g_3^{-1}$, which preserve all terms in the expression of $L(X)$ except for the following:
\begin{align}\label{1to13and2to23}
l(g_1g_3) &\mapsto l(g_1) \mapsto l(g_1g_3^{-1}) \mapsto l(g_1g_3^{-2})  \notag \\
l(g_2g_3) &\mapsto l(g_2) \mapsto l(g_2g_3^{-1}) \mapsto l(g_2g_3^{-2})   \\
l(g_1g_2)  &\mapsto l(g_1g_3^{-1}g_2g_3^{-1})=l(g_3^{-1}g_2g_3^{-1}g_1). \notag 
\end{align}
By \Cref{overlap}, $l(g_1g_3^{-2})\le l(g_1g_3)-l(g_3)$ and $l(g_2g_3^{-2})\le l(g_2g_3)-l(g_3)$, so it remains to compare $l(g_3^{-1}g_2g_3^{-1}g_1)$ with $l(g_1g_2)$. \Cref{overlap} shows that $l(g_3^{-1}g_2)=\max\{0,l(g_2)-l(g_3)-2\Delta'\}$, where $\Delta'$ is the length of the path $\gamma_2 \cap g_3 \cdot \gamma_2$. A similar argument to the previous case shows that 
\begin{align*}
l(g_3^{-1}g_2g_3^{-1}g_1)&\le d(p,g_3^{-1}g_2p)+d(p,g_3^{-1}g_1p)\\
&\le l(g_1g_2)-l(g_1)-l(g_3)+d(p,g_3^{-1}g_1p).
\end{align*}

If $l(g_1)<l(g_3)$, then the axis of $g_3^{-1}g_1$ contains $p$ (see \Cref{bigint}) and $d(p,g_3^{-1}g_1p)=l(g_3)-l(g_1)$ by \Cref{overlap}. On the other hand, if $l(g_1)=l(g_3)$, then $g_3^{-1}g_1$ fixes $p$ and so $d(p,g_3^{-1}g_1p)=0$. Hence $L(X^3_{\{\},\{1,2\}})<L(X)$, and overall we conclude that $X$ is not minimal when $\Delta(\gamma_1,\gamma_3) \ge l(g_1)$.

Finally, we suppose that $\Delta(\gamma_1,\gamma_3) < l(g_1)$ and $\Delta \ge l(g_2)$. Consider the product replacements $g_1 \mapsto g_1g_2^{-1}$ and $g_3 \mapsto g_3g_2^{-1}$, which change the terms of $L(X)$ as in (\ref{1to12and3to32}). \Cref{overlap} shows that $l(g_1g_2^{-2})\le l(g_1g_2)-l(g_2)$ and $l(g_2^2g_3^{-1})\le l(g_2g_3)-l(g_2)$, so it remains to compare $l(g_2^{-1}g_3g_2^{-1}g_1)$ and $l(g_1g_3)$. By \Cref{overlap}, $l(g_2^{-1}g_3)=\max\{0,l(g_3)-l(g_2)-2\Delta'\}$, where $\Delta'$ is the length of the path $\gamma_3 \cap g_2 \cdot \gamma_3$ if $\Delta=l(g_2)$, and $\Delta'=0$ otherwise. As in the previous case, the initial vertex $p$ of $\gamma_2 \cap \gamma_3=\gamma_1 \cap \gamma_2$ is such that
\begin{align*}
l(g_2^{-1}g_3g_2^{-1}g_1)&\le d(p,g_2^{-1}g_3p)+d(p,g_2^{-1}g_1p)\\
&\le l(g_1g_3)-l(g_1)-l(g_2)+d(p,g_2^{-1}g_1p).
\end{align*}

Similarly, $l(g_2^{-1}g_1)=\max\{0,l(g_1)-l(g_2)-2\Delta''\}$, where $\Delta''$ is the length of the path $\gamma_1 \cap g_2 \cdot \gamma_1$ if $\Delta=l(g_2)$, and $\Delta''=0$ otherwise. Since $g_2^{-1}g_1$ is either hyperbolic with axis at distance $\Delta''$ from $p$ (see \Cref{bigint3}), or elliptic and fixes a vertex at distance $\frac{l(g_1)-l(g_2)}{2}$ from $p$ (by \Cref{ellfix}), it follows that $d(p,g_2^{-1}g_1p)\le l(g_1)-l(g_2)$. Thus $L(X^2_{\{\},\{1,3\}})<L(X)$ and $X$ is not minimal.
\end{proof}

\begin{lem}\label{3threepairs2}
Let $X=(g_1,g_2,g_3)$ be a triple of hyperbolic isometries of a simplicial tree such that $\gamma_1 \cap \gamma_2, \gamma_1 \cap \gamma_3$ and $\gamma_2 \cap \gamma_3$ are non-empty, distinct and all contained in a single axis. If $X$ does not satisfy the hypotheses of the Ping Pong Lemma, then it is not minimal.
\end{lem}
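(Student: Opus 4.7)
The plan is to follow the same strategy as the preceding three lemmas: assume the Ping Pong hypotheses fail, then exhibit a product replacement (possibly a double replacement of the type appearing in (\ref{1to12and3to32}) or (\ref{1to13and2to23})) whose effect is to strictly decrease $L(X)$, with the new translation lengths computed via \Cref{overlap}.

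First I would fix notation. By relabelling, take the single containing axis to be $\gamma_3$, so that $\gamma_1\cap\gamma_2\subseteq\gamma_3$. Writing $\gamma_1\cap\gamma_3=[a_1,b_1]$ and $\gamma_2\cap\gamma_3=[a_2,b_2]$ as subpaths of $\gamma_3$, one then has $\gamma_1\cap\gamma_2=[a_1,b_1]\cap[a_2,b_2]$. Distinctness of the three intersections forces that neither $[a_1,b_1]$ nor $[a_2,b_2]$ contains the other, so after reorienting $\gamma_3$ we may assume $a_1<a_2\le b_1<b_2$, with $\gamma_1\cap\gamma_2=[a_2,b_1]$. The remaining geometric data is the orientation of $\gamma_1$ relative to $\gamma_3$ and of $\gamma_2$ relative to $\gamma_3$, each either agreeing or disagreeing, giving at most four configurations up to replacing $g_i$ by $g_i^{-1}$.

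Next I would translate the failure of the Ping Pong Lemma into a concrete inequality: at least one pair $(\gamma_i,\gamma_j)$ must overlap with opposite orientations in a path of length $\Delta(\gamma_i,\gamma_j)\ge\min\{l(g_i),l(g_j)\}$, or else intersect with the same orientation. After WLOG exchanges among the three indices, I would handle each of the resulting configurations with one of two moves: either a conjugation $g_i\mapsto g_3 g_i g_3^{-1}$ as in (\ref{1to313}) (which slides $\gamma_i$ along $\gamma_3$ by $l(g_3)$ and lowers $l(g_ig_k)$ and $l(g_ig_k^{-1})$ for the remaining index $k$), or a peeling move $g_i\mapsto g_i g_3^{-1}$, accompanied when necessary by a parallel move $g_j\mapsto g_j g_3^{-1}$ in the style of (\ref{1to12and3to32}) and (\ref{1to13and2to23}). \Cref{overlap}, together with \Cref{ellfix} and the vertex-displacement bound \cite[Proposition 1.7]{P} for the elliptic boundary cases, supplies the required arithmetic comparisons.

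I expect the main difficulty to be the bookkeeping of sub-cases rather than any conceptual novelty. Although the geometric picture is rigid, the orderings among $l(g_1),l(g_2),l(g_3)$ and the positions of the auxiliary overlaps $\gamma_i\cap g_j\cdot\gamma_i$ inside $\gamma_3$ multiply the cases in much the same way as in the proof of \Cref{3threepairs1}. My strategy would be to first dispose of every configuration in which a single conjugation suffices (arguing as in \Cref{3noint}, \Cref{3onepair}, and \Cref{3twopairs}), and then reduce the remaining tight configurations to the double replacements above, where the key estimate $d(p,g_i^{-1}g_jp)\le|l(g_i)-l(g_j)|$ at a common endpoint $p\in\{a_2,b_1\}$ of the overlap, combined with the triangle inequality along $\gamma_3$, carries the comparison through.
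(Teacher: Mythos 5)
Your configuration set-up is correct and matches the paper's: taking $\gamma_3$ to be the containing axis, distinctness forces the two overlaps $\gamma_1\cap\gamma_3$ and $\gamma_2\cap\gamma_3$ to be staggered along $\gamma_3$, each strictly containing $\gamma_1\cap\gamma_2$. You have also identified the right toolkit (\Cref{overlap}, \Cref{ellfix}, the displacement bound of \cite[Proposition 1.7]{P}, and double replacements of the type in (\ref{1to12and3to32}) and (\ref{1to13and2to23})). However, what you have written is a plan rather than a proof: the entire content of this lemma is the verification that in every tight configuration some specific replacement strictly decreases $L(X)$, and that verification is exactly what you defer with ``I expect the main difficulty to be the bookkeeping.'' The paper's argument for this case runs to roughly two pages of branching on the orderings of $l(g_1),l(g_2),l(g_3)$ and on whether each $\Delta(\gamma_i,\gamma_j)$ is less than, equal to, or greater than $\min\{l(g_i),l(g_j)\}$, and none of those branches is actually checked in your proposal.

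Two concrete points where the sketch, taken at face value, would not go through. First, your move set is too narrow: having fixed $\gamma_3$ as the containing axis, index permutations involving $3$ are no longer free, yet you only allow conjugation by $g_3$ and peeling by $g_3$. The actual argument must in several cases peel $g_3$ by $g_1$ (the replacement $g_3\mapsto g_3g_1^{-1}$ of (\ref{3to31}), used when $\Delta(\gamma_1,\gamma_3)\ge l(g_1)$ and $\Delta(\gamma_2,\gamma_3)\neq\min\{l(g_2),l(g_3)\}$) and peel both $g_1$ and $g_3$ by $g_2$ (the replacement $X^2_{\{\},\{1,3\}}$, which is what rescues the case $\Delta(\gamma_2,\gamma_3)=l(g_2)<l(g_3)$); no conjugation by $g_3$ is used in this lemma at all. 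Second, your ``key estimate'' $d(p,g_i^{-1}g_jp)\le|l(g_i)-l(g_j)|$ is only valid when $\gamma_i$ and $\gamma_j$ overlap through $p$ in a path of length at least $\min\{l(g_i),l(g_j)\}$ with opposite orientations; in the remaining branches one only gets $d(p,g_2^{-1}g_1p)=l(g_1)+l(g_2)-2\Delta(\gamma_1,\gamma_2)$, or merely a strict bound below $l(g_i)+l(g_j)$, and the comparison must then be closed using the compensating gains $-2\Delta(\gamma_1,\gamma_3)$ or $-2\Delta(\gamma_1,\gamma_2)$ coming from the $l(g_1g_3^{-2})$ and $l(g_1g_2^{-2})$ terms. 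Until each branch is carried out with the correct choice of peeling element and the correct displacement bound, the lemma is not proved.
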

\begin{proof}
Without loss of generality, suppose that $\gamma_3$ contains all three paths of intersection. Note that $\Delta(\gamma_1,\gamma_3), \Delta(\gamma_2,\gamma_3)>\Delta(\gamma_1,\gamma_2)$, as otherwise we are in the case described by \Cref{3threepairs1}. We may further suppose that $\gamma_3$ intersects both $\gamma_1$ and $\gamma_2$ with the same orientation, and that $g_3$ translates $\gamma_1$ towards $\gamma_2$. Let $\Delta_3$ denote the length of the path ${\rm Proj}_{\gamma_3}(\gamma_1) \cup {\rm Proj}_{\gamma_3}(\gamma_2)$, so that $\Delta_3=\Delta(\gamma_1,\gamma_3)+\Delta(\gamma_2,\gamma_3)-\Delta(\gamma_1,\gamma_2)$. 

Let us first suppose that $\Delta(\gamma_1,\gamma_3)<l(g_1)$ and $\Delta(\gamma_2,\gamma_3)<l(g_2)$. Since $X$ does not satisfy the hypotheses of the Ping Pong Lemma, $\Delta_3 \ge l(g_3)$. We may also assume (after swapping the roles of $g_1$ and $g_2$, if necessary) that $\Delta(\gamma_1,\gamma_3) \ge \Delta(\gamma_2,\gamma_3)$, and hence $l(g_3) \le 2\Delta(\gamma_1,\gamma_3)-\Delta(\gamma_1,\gamma_2)$. Consider the product replacement $g_1 \mapsto g_1g_3^{-1}$, which preserves all terms in the expression of $L(X)$ except for the following:
\begin{align}\label{1to13}
l(g_1g_3) &\mapsto l(g_1) \mapsto l(g_1g_3^{-1}) \mapsto l(g_1g_3^{-2})  \notag \\
l(g_1g_2)  &\mapsto l(g_1^{-1}g_2^{-1}g_3)  \\
l(g_1g_2^{-1})  &\mapsto l(g_1^{-1}g_2g_3). \notag 
\end{align}
\Cref{overlap} shows that 
\begin{align*}
l(g_1g_3^{-2})&\le l(g_1g_3)+l(g_3)-2\Delta(\gamma_1,\gamma_3) \\
& \le l(g_1g_3)-\Delta(\gamma_1,\gamma_2).
\end{align*} 
The axis of $g_1^{-1}g_2^{-1}$ (respectively $g_1^{-1}g_2$) intersects $\gamma_3$ with opposite orientations along a subpath of length $\Delta_3$ (respectively $\Delta(\gamma_1,\gamma_3)-\Delta(\gamma_1,\gamma_2)$); see the upper diagram of \Cref{case5}. It follows from \Cref{overlap} that $l(g_1^{-1}g_2^{-1}g_3) \le l(g_1g_2)-l(g_3)$ and 
\begin{align*}
l(g_1^{-1}g_2g_3) &\le l(g_1g_2^{-1})+l(g_3)+2\Delta(\gamma_1,\gamma_2)-2\Delta(\gamma_1,\gamma_3) \\
&\le l(g_1g_2^{-1})+\Delta(\gamma_1,\gamma_2).
\end{align*}
Hence $L(X^3_{\{\},\{1\}})<L(X)$, so $X$ is not minimal if $\Delta(\gamma_1,\gamma_3)<l(g_1)$ and $\Delta(\gamma_2,\gamma_3)<l(g_2)$.

After switching the roles of $g_1$ and $g_2$, if necessary, we may assume for the remainder of the proof that $\Delta(\gamma_1,\gamma_3) \ge l(g_1)$. Suppose first that $l(g_1) \le l(g_3)$. If $\Delta(\gamma_2,\gamma_3) \neq \min\{l(g_2),l(g_3)\}$, then consider the product replacement $g_3 \mapsto g_3g_1^{-1}$, which changes the terms of $L(X)$ as in (\ref{3to31}). As before, $l(g_1^2g_3^{-1})\le l(g_1g_3)-l(g_1)$. Moreover, the axis of $g_2g_3$  intersects the axis of $g_1^{-1}$ with opposite orientations along a subpath of length either $\Delta(\gamma_1,\gamma_3)$ (if $l(g_3)>\Delta(\gamma_1,\gamma_3)-\Delta(\gamma_1,\gamma_2)$) or $l(g_3)+\Delta(\gamma_1,\gamma_2)$ (otherwise); see the lower diagram of \Cref{case5} for the former case. In either case, \Cref{overlap} shows that $l(g_1^{-1}g_2g_3) \le l(g_2g_3)-l(g_1)$.

If $\Delta(\gamma_2,\gamma_3)< \min\{l(g_2),l(g_3)\}$, then the axis of $g_2^{-1}g_3$ intersects $\gamma_1$; see the lower diagram of \Cref{case5} for the case where $\Delta_3>l(g_3)$. On the other hand, if $\Delta(\gamma_2,\gamma_3)> \min\{l(g_2),l(g_3)\}$, then either $l(g_2)=l(g_3)$ and $g_2^{-1}g_3$ fixes the initial vertex $p$ of $\gamma_2 \cap \gamma_3$, or $l(g_2)\neq l(g_3)$ the axis of $g_2^{-1}g_3$ contains $p$; see \Cref{bigint}. Since $p \in \gamma_1$, \cite[Proposition 1.7]{P} (respectively \Cref{overlap}) shows that $l(g_1^{-1}g_2^{-1}g_3) \le l(g_2g_3^{-1})+l(g_1)$. Thus $L(X^1_{\{\},\{3\}})<L(X)$, and so $X$ is not minimal if $\Delta(\gamma_2,\gamma_3) \neq \min\{l(g_2),l(g_3)\}$.

We may now suppose that $\Delta(\gamma_2,\gamma_3)= \min\{l(g_2),l(g_3)\}$. If $l(g_2)=l(g_3)$, then $g_2^{-1}g_3$ fixes $p$ and $X$ is not minimal as above. So let us first assume that $\Delta(\gamma_2,\gamma_3)=l(g_2)<l(g_3)$. Consider the two product replacements $g_1 \mapsto g_1g_2^{-1}$ and $g_3 \mapsto g_3g_2^{-1}$, which change the terms of $L(X)$ as in (\ref{1to12and3to32}). \Cref{overlap} shows that $l(g_1g_2^{-2})\le l(g_1g_2)+l(g_2)-2\Delta(\gamma_1,\gamma_2)$ and $l(g_2^2g_3^{-1})\le l(g_2g_3)-l(g_2)$, so it remains to compare $l(g_2^{-1}g_3g_2^{-1}g_1)$ and $l(g_1g_3)$.

By \Cref{overlap}, $l(g_2^{-1}g_3)=\max\{0,l(g_3)-l(g_2)-2\Delta'\}$, where $\Delta'$ is the length of the path $\gamma_3 \cap g_2 \cdot \gamma_3$. Either the axis of $g_2^{-1}g_3$ is at distance $\Delta'$ from $p$ (see \Cref{bigint3}), or $g_2^{-1}g_3$ fixes a vertex at distance $\frac{l(g_3)-l(g_2)}{2}$ from $p$ by \Cref{ellfix}. As before, this shows that $d(p,g_2^{-1}g_3p) \le l(g_3)-l(g_2)$ and hence
\begin{align*}
l(g_2^{-1}g_3g_2^{-1}g_1)&\le d(p,g_2^{-1}g_3p)+d(p,g_2^{-1}g_1p)\\
&\le l(g_1g_3)-l(g_1)-l(g_2)+d(p,g_2^{-1}g_1p).
\end{align*}

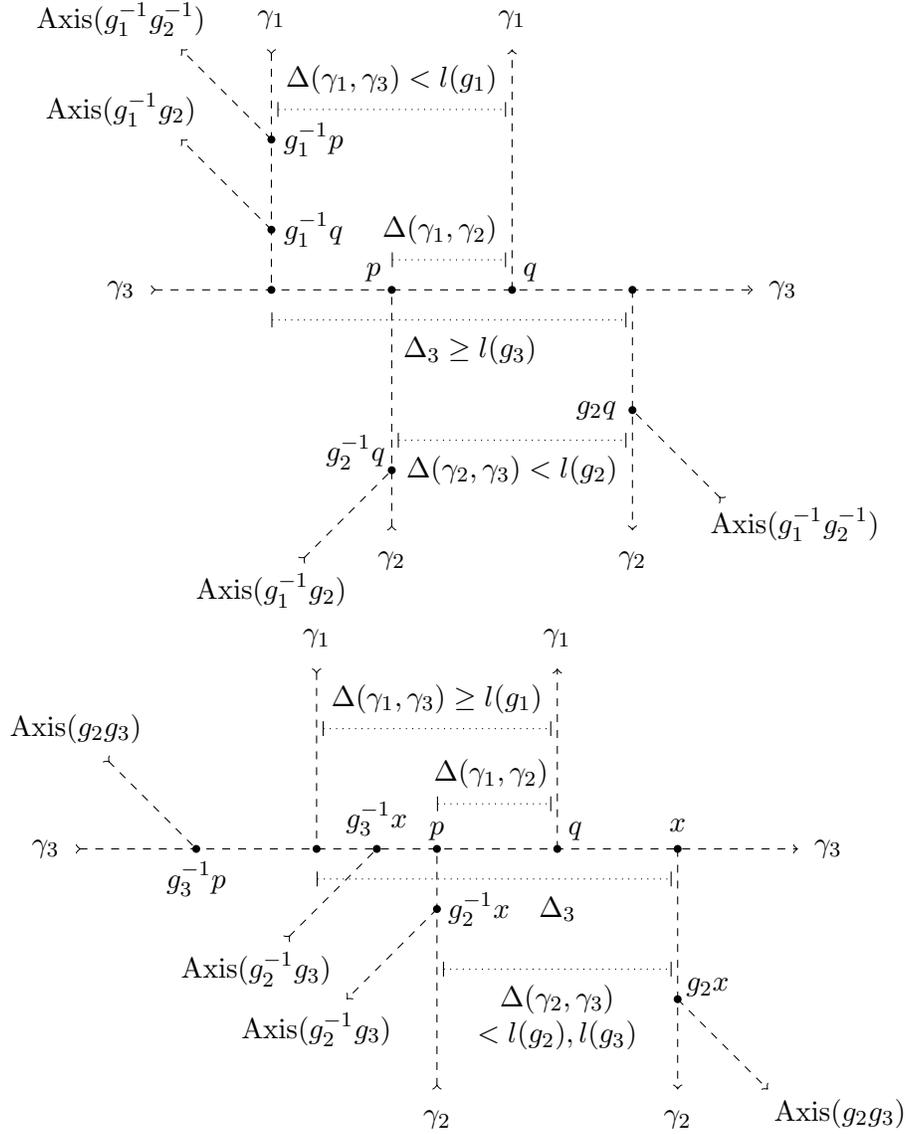
\begin{figure}[h]
\centering

\begin{tikzpicture}
  [scale=0.8,auto=left] 

\draw[dashed,>->] (0,0) to (10,0); \node at (-0.5,0) {$\gamma_3$}; \node at (10.5,0) {$\gamma_3$};
\draw[dashed,>-] (4,-4) to (4,0); \node at (4,-4.5) {$\gamma_2$}; \draw[dashed,>-] (2,4) to (2,0); \node at (2,4.5) {$\gamma_1$};
\draw[dashed,->] (8,0) to (8,-4); \node at (8,-4.5) {$\gamma_2$}; \draw[dashed,->] (6,0) to (6,4); \node at (6,4.5) {$\gamma_1$};
 \node[circle,inner sep=0pt,minimum size=3,fill=black] (1) at (2, 0) {}; 
  \node[circle,inner sep=0pt,minimum size=3,fill=black] (1) at (4, 0) {}; \node at (3.7,0.3) {$p$};
\node[circle,inner sep=0pt,minimum size=3,fill=black] (1) at (6, 0) {}; \node at (6.3,0.3) {$q$};
\node[circle,inner sep=0pt,minimum size=3,fill=black] (1) at (8, 0) {};
\draw[dotted,|-|] (4,0.5) to (5.9,0.5); \node at (4.8,1) {$\Delta(\gamma_1,\gamma_2)$};
\draw[dotted,|-|] (2.1,3) to (5.9,3); \node at (4,3.5) {$\Delta(\gamma_1,\gamma_3)<l(g_1)$};
\draw[dotted,|-|] (4.1,-2.5) to (7.9,-2.5); \node at (6,-3) {$\Delta(\gamma_2,\gamma_3)<l(g_2)$};
\draw[dotted,|-|] (2,-0.5) to (7.9,-0.5); \node at (5.3,-1) {$\Delta_3 \ge l(g_3)$};

  \node[circle,inner sep=0pt,minimum size=3,fill=black] (1) at (2, 2.5) {}; \node at (2.7,2.5) {$g_1^{-1}p$};
 \node[circle,inner sep=0pt,minimum size=3,fill=black] (1) at (8,-2) {}; \node at (7.4,-2) {$g_2q$};
 \draw[dashed, >-] (9.5,-3.5) to (8,-2); \node at (10.7,-3.9) {${\rm Axis}(g_1^{-1}g_2^{-1})$};
  \draw[dashed, ->] (2,2.5) to (0.5,4); \node at (-0.5,4.5) {${\rm Axis}(g_1^{-1}g_2^{-1})$};
  
    \node[circle,inner sep=0pt,minimum size=3,fill=black] (1) at (2, 1) {}; \node at (2.7,1) {$g_1^{-1}q$};
      \draw[dashed, ->] (2,1) to (0.5,2.5); \node at (-0.5,3) {${\rm Axis}(g_1^{-1}g_2)$};
      
\node[circle,inner sep=0pt,minimum size=3,fill=black] (1) at (4, -3) {}; \node at (3.4,-2.7) {$g_2^{-1}q$};
      \draw[dashed, >-] (2.5,-4.5) to (4,-3); \node at (2,-5) {${\rm Axis}(g_1^{-1}g_2)$};

\end{tikzpicture}

\begin{tikzpicture}
  [scale=0.8,auto=left] 

\draw[dashed,>->] (-2,0) to (10,0); \node at (-2.5,0) {$\gamma_3$}; \node at (10.5,0) {$\gamma_3$};
\draw[dashed,>-] (4,-4) to (4,0); \node at (4,-4.5) {$\gamma_2$}; \draw[dashed,>-] (2,3) to (2,0); \node at (2,3.5) {$\gamma_1$};
\draw[dashed,->] (8,0) to (8,-4); \node at (8,-4.5) {$\gamma_2$}; \draw[dashed,->] (6,0) to (6,3); \node at (6,3.5) {$\gamma_1$};
 \node[circle,inner sep=0pt,minimum size=3,fill=black] (1) at (2, 0) {}; 
  \node[circle,inner sep=0pt,minimum size=3,fill=black] (1) at (4, 0) {}; \node at (4,0.3) {$p$};
\node[circle,inner sep=0pt,minimum size=3,fill=black] (1) at (6, 0) {}; \node at (6.3,0.3) {$q$};
\node[circle,inner sep=0pt,minimum size=3,fill=black] (1) at (8, 0) {};\node at (8,0.4) {$x$};
\draw[dotted,|-|] (4,0.75) to (5.9,0.75); \node at (4.9,1.25) {$\Delta(\gamma_1,\gamma_2)$};
\draw[dotted,|-|] (2.1,2) to (5.9,2); \node at (4,2.5) {$\Delta(\gamma_1,\gamma_3)\ge l(g_1)$};
\draw[dotted,|-|] (4.1,-2) to (7.9,-2); \node at (6,-2.5) {$\Delta(\gamma_2,\gamma_3)$}; \node at (6,-3.1) {$<l(g_2),l(g_3)$};
\draw[dotted,|-|] (2,-0.5) to (7.9,-0.5); \node at (6,-1) {$\Delta_3$};

  \node[circle,inner sep=0pt,minimum size=3,fill=black] (1) at (0, 0) {}; \node at (0,-0.5) {$g_3^{-1}p$};
 \node[circle,inner sep=0pt,minimum size=3,fill=black] (1) at (8,-2.5) {}; \node at (8.5,-2.3) {$g_2x$};
 \draw[dashed, ->] (8,-2.5) to (9.5,-4); \node at (10.7,-4.4) {${\rm Axis}(g_2g_3)$};
  \draw[dashed, >-] (-1.5,1.5) to (0,0); \node at (-2,2) {${\rm Axis}(g_2g_3)$};
  
    \node[circle,inner sep=0pt,minimum size=3,fill=black] (1) at (3, 0) {}; \node at (3,0.5) {$g_3^{-1}x$};
      \draw[dashed, >-] (1.5,-1.5) to (3,0); \node at (1,-2) {${\rm Axis}(g_2^{-1}g_3)$};
      
\node[circle,inner sep=0pt,minimum size=3,fill=black] (1) at (4, -1) {}; \node at (4.7,-1) {$g_2^{-1}x$};
      \draw[dashed, ->]  (4,-1) to (2.5,-2.5); \node at (2,-3) {${\rm Axis}(g_2^{-1}g_3)$};

\end{tikzpicture}

\caption{All three pairs of axes $\gamma_1, \gamma_2, \gamma_3$ intersect along distinct subpaths, and one axis contains all three paths of intersection.} 
\label{case5}

\end{figure}

If $\Delta(\gamma_1,\gamma_2)<l(g_1)$, then $g_2^{-1}g_1$ is hyperbolic and its axis contains $p$ (see \Cref{smallint}) so $d(p,g_2^{-1}g_1p)=l(g_1)+l(g_2)-2\Delta(\gamma_1,\gamma_2)$ by \Cref{overlap}. If $\Delta(\gamma_1,\gamma_2)\ge l(g_1)$, then $l(g_2^{-1}g_1)=\max\{0,l(g_2)-l(g_1)-2\Delta''\}$ by \Cref{overlap}, where $\Delta''$ is the length of the path $\gamma_2 \cap g_1 \cdot \gamma_2$ if $\Delta(\gamma_1,\gamma_2)= l(g_1)$, and $\Delta''=0$ otherwise. \Cref{bigint}, \Cref{bigint3} and \Cref{ellfix} show that either the axis of $g_2^{-1}g_1$ is at distance $\Delta''$ from $p$, or $g_2^{-1}g_1$ fixes a vertex at distance $\frac{l(g_2)-l(g_1)}{2}$ from $p$. Thus $d(p,g_2^{-1}g_1p)\le l(g_2)-l(g_1)$. So $L(X^2_{\{\},\{1,3\}})<L(X)$, and $X$ is not minimal when $\Delta(\gamma_2,\gamma_3)=l(g_2)<l(g_3)$.

Hence we may suppose that $\Delta(\gamma_2,\gamma_3)=l(g_3)<l(g_2)$. Now consider the product replacements $g_1 \mapsto g_1g_3^{-1}$ and $g_2 \mapsto g_2g_3^{-1}$, which change the terms of $L(X)$ as in (\ref{1to13and2to23}). Since $l(g_1g_3^{-2})\le l(g_1g_3)+l(g_3)-2\Delta(\gamma_1,\gamma_3)$ and $l(g_2g_3^{-2}) \le l(g_2g_3)-l(g_3)$ by \Cref{overlap}, it remains to compare $l(g_3^{-1}g_2g_3^{-1}g_1)$ and $l(g_1g_2)$. Note that $l(g_3^{-1}g_2)=\max\{0,l(g_2)-l(g_3)-2\Delta'\}$ by \Cref{overlap}, where $\Delta'$ is the length of the path $\gamma_2 \cap g_3 \cdot \gamma_2$. After swapping the roles of $g_2$ and $g_3$ in the above case, it is straightforward to verify that 
\begin{align*}
l(g_3^{-1}g_2g_3^{-1}g_1)&\le d(p,g_3^{-1}g_2p)+d(p,g_3^{-1}g_1p)\\
&\le l(g_1g_2)-l(g_1)-l(g_3)+d(p,g_3^{-1}g_1p).
\end{align*}
If $l(g_1)=l(g_3)$, then $l(g_1)>\Delta(\gamma_1,\gamma_2)$ and $g_3^{-1}g_1$ fixes a vertex at distance $l(g_1)-\Delta(\gamma_1,\gamma_2)$ from $p$. Hence $d(p,g_3^{-1}g_1p)\le l(g_1)+l(g_3)-2\Delta(\gamma_1,\gamma_2)$. If $l(g_1)<l(g_3)$, then $l(g_3^{-1}g_1)=\max\{0,l(g_3)-l(g_1)-2\Delta''\}$, where $\Delta''$ is the length of the path $\gamma_3 \cap g_1 \cdot \gamma_3$ if $\Delta(\gamma_1,\gamma_3)=l(g_1)$, and $\Delta''=0$ otherwise. If $g_3^{-1}g_1$ is hyperbolic, then its axis either contains $p$ (if $l(g_1) \le \Delta(\gamma_1,\gamma_2)$) or is at distance $\Delta''+l(g_1)-\Delta(\gamma_1,\gamma_2)$ from $p$ (otherwise); see \Cref{bigint,bigint3}. If $g_3^{-1}g_1$ is elliptic, then it either fixes $p$ (if $l(g_1) \le \Delta(\gamma_1,\gamma_2)$) or fixes a vertex at distance $\frac{l(g_3)-l(g_1)}{2}+l(g_1)-\Delta(\gamma_1,\gamma_2)$ from $p$ by \Cref{ellfix}. In either case, $d(p,g_3^{-1}g_1p)< l(g_1)+l(g_3)$ and hence $L(X^3_{\{\},\{1,2\}})<L(X)$. We conclude that $X$ is not minimal when $l(g_1) \le l(g_3)$.

Finally, we suppose that $l(g_3)<l(g_1)$. Consider the product replacements $g_1 \mapsto g_1g_3^{-1}$ and $g_2 \mapsto g_2g_3^{-1}$, which change the terms of $L(X)$ as in (\ref{1to13and2to23}). Since $l(g_1g_3^{-2})< l(g_1g_3)-l(g_3)$ and $l(g_2g_3^{-2})\le l(g_2g_3)+l(g_3)-2\Delta(\gamma_2,\gamma_3)$ by \Cref{overlap}, it remains to compare $l(g_1g_3^{-1}g_2g_3^{-1})$ and $l(g_1g_2)$. Note that $l(g_1g_3^{-1})=l(g_1)-l(g_3)$, and the axis of $g_1g_3^{-1}$ contains the terminal vertex $q$ of $\gamma_1 \cap \gamma_2$; see \Cref{bigint}. Hence
\begin{align*}
l(g_1g_3^{-1}g_2g_3^{-1})&\le d(q,g_1g_3^{-1}q)+d(q,g_2g_3^{-1}q)\\
&\le l(g_1g_2)-l(g_2)-l(g_3)+d(q,g_2g_3^{-1}q).
\end{align*}

If $\Delta(\gamma_2,\gamma_3)<\min\{l(g_2),l(g_3)\}$, then the axis of $g_2g_3^{-1}$ is at distance $\Delta(\gamma_2,\gamma_3)-\Delta(\gamma_1,\gamma_2)$ from $q$ (see \Cref{smallint}) and thus \Cref{overlap} shows that $d(q,g_2g_3^{-1}q)=l(g_2)+l(g_3)-2\Delta(\gamma_1,\gamma_2)$. Hence $L(X^3_{\{\},\{1,2\}})<L(X)$, so $X$ is not minimal. We may therefore assume for the remainder of the proof that $\Delta(\gamma_2,\gamma_3) \ge \min\{l(g_2),l(g_3)\}$.

If $l(g_2)=l(g_3)$, then $g_2g_3^{-1}$ either fixes $q$ (if $l(g_2) \le \Delta(\gamma_1,\gamma_2)$) or fixes a vertex at distance $l(g_2)-\Delta(\gamma_1,\gamma_2)$ from $q$ (otherwise). In either case, $d(q,g_2g_3^{-1}q) \le l(g_2)+l(g_3)-2\Delta(\gamma_1,\gamma_2)$ and so $X$ is not minimal. So assume that $l(g_2)<l(g_3)$. By \Cref{overlap}, $l(g_2g_3^{-1})=\max\{0,l(g_3)-l(g_2)-2\Delta'\}$, where $\Delta'$ is the length of $\gamma_3 \cap g_2 \cdot \gamma_3$ if $\Delta(\gamma_2,\gamma_3)=l(g_2)$, and $\Delta'=0$ otherwise. If $g_2g_3^{-1}$ is hyperbolic, then its axis either contains $q$ (when $l(g_2) \le \Delta(\gamma_1,\gamma_2)$) or is at distance $\Delta'+l(g_2)-\Delta(\gamma_1,\gamma_2)$ from $q$ (otherwise); see \Cref{bigint,bigint3}. Hence $d(q,g_2g_3^{-1}q) < l(g_2)+l(g_3)$. On the other hand, if $g_2g_3^{-1}$ is elliptic, then by \Cref{ellfix} it fixes a vertex at distance $\frac{l(g_3)-l(g_2)}{2}+l(g_2)-\Delta(\gamma_1,\gamma_2)$ from $q$, and $d(q,g_2g_3^{-1}q)< l(g_2)+l(g_3)$. In both cases, $X$ is not minimal, so we may finally suppose that $l(g_3)<l(g_2)$.

By \Cref{overlap}, $l(g_2g_3^{-1})=\max\{0,l(g_2)-l(g_3)-2\Delta'\}$, where $\Delta'$ is the length of $\gamma_2 \cap g_3 \cdot \gamma_2$ if $\Delta(\gamma_2,\gamma_3)=l(g_3)$, and $\Delta'=0$ otherwise. By swapping the roles of $g_2$ and $g_3$ in the previous case, we again see that $d(q,g_2g_3^{-1}q)< l(g_2)+l(g_3)$ and this completes the proof.
\end{proof}

\begin{lem}\label{3threepairs3}
Let $X=(g_1,g_2,g_3)$ be a triple of hyperbolic isometries of a simplicial tree such that $\gamma_1 \cap \gamma_2, \gamma_1 \cap \gamma_3$ and $\gamma_2 \cap \gamma_3$ are non-empty and distinct, but not all contained in a single axis. If $X$ does not satisfy the hypotheses of the Ping Pong Lemma, then it is not minimal.
\end{lem}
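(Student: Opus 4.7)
The plan is to first pin down the geometry of the three axes — which must form a specific ``tripod'' — and then to find a productive product replacement by case analysis on the translation directions.

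\textbf{Geometric reduction to a tripod.} By the Helly property for convex subtrees in a tree (three pairwise intersecting convex subsets share a point), the triple intersection $I:=\gamma_1\cap \gamma_2\cap \gamma_3$ is non-empty. I claim $I$ is a single vertex $v$ and that the configuration at $v$ is the \emph{tripod}: three distinct arms $e_{12}, e_{13}, e_{23}$ emanate from $v$, with $\gamma_i$ using exactly the two arms $e_{ij}$ and $e_{ik}$, so that $\gamma_i\cap \gamma_j$ is a subpath of length $\Delta_{ij}\ge 1$ along arm $e_{ij}$. Indeed, if $I$ were a proper subpath with distinct endpoints $p,q$, then at each endpoint at most one pair of axes can share the outward direction (else $I$ would extend further). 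A short case check shows that we always fall into a previous lemma: if the same pair extends at both endpoints or at only one endpoint, two of the $\gamma_i\cap \gamma_j$ coincide and \Cref{3threepairs1} applies; and if different pairs extend at $p$ and $q$, the two extensions share a common axis, so all three pairwise intersections lie in that axis and \Cref{3threepairs2} applies. The same dichotomy rules out any configuration at a single vertex other than the tripod: fewer than three shared arms at $v$ leaves either two intersections equal (\Cref{3threepairs1}) or all three in one axis (\Cref{3threepairs2}).

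\textbf{Case analysis and productive replacements.} After relabelling indices, assume the Ping Pong hypothesis fails at $\gamma_1$, so $\Delta_{12}+\Delta_{13}\ge l(g_1)$. Each $g_i$ translates in one of two directions through $v$, yielding eight orientation tuples, which reduce to four by the $\gamma_2\leftrightarrow \gamma_3$ symmetry (which permutes indices while preserving the failure condition). In each case, \Cref{overlap} expresses every $l(g_ig_j^{\pm 1})$ in terms of $l(g_i), l(g_j), \Delta_{ij}$ and the relative orientation on the shared arm $e_{ij}$, and I would exhibit a replacement $X^{j}_{S_1,S_2}$ with $L(X^{j}_{S_1,S_2})<L(X)$. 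The productive candidates mirror the templates used in \Cref{3threepairs1} and \Cref{3threepairs2}: a conjugation $g_1\mapsto g_jg_1g_j^{-1}$ (translating $\gamma_1$ along $\gamma_j$ to reduce its projections onto the other axes); a single replacement $g_1\mapsto g_jg_1$ or $g_j\mapsto g_jg_1^{-1}$ (shortening a dominant overlap $\Delta_{1j}$); or a two-entry replacement of the form $X^{k}_{\varnothing,\{1,j\}}$ when some $\Delta_{ij}$ sits exactly at $\min\{l(g_i),l(g_j)\}$.

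\textbf{Main obstacle.} The principal difficulty is the combinatorial bookkeeping: each of the four orientation cases further splits according to the comparisons of each $\Delta_{ij}$ with $\min\{l(g_i),l(g_j)\}$, and according to whether intermediate products such as $g_j^{-1}g_k$ are hyperbolic or elliptic. The elliptic subcases are handled by \Cref{ellfix} and \cite[Proposition~1.7]{P}, which pin down the fixed vertex and bound the relevant displacements, exactly as in the later stages of \Cref{3threepairs1} and \Cref{3threepairs2}. Keeping the argument manageable will require systematically exploiting the $S_3$ symmetry permuting the three arms (and axes) of the tripod to recycle displacement computations across cases, so that only a few archetypal calculations need to be performed in full.
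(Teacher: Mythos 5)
Your opening geometric reduction is sound and is actually a nice clarification of what the paper leaves implicit in its figures: by the Helly property the three axes share a vertex, and a short check shows that unless the configuration is the tripod (three distinct arms at a single branch vertex $v$, each arm carrying exactly one of the pairwise intersections), two of the intersection paths coincide or all three lie in one axis, so \Cref{3threepairs1} or \Cref{3threepairs2} applies. That part I would accept.

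However, the proof stops where the lemma actually begins. The entire content of the statement is the verification that, in every configuration of orientations and in every regime of comparisons between the $\Delta_{ij}$ and the translation lengths, some specific replacement $X^{j}_{S_1,S_2}$ strictly decreases $L(X)$ — and your write-up never performs a single one of these verifications. Phrases such as ``I would exhibit a replacement,'' ``the productive candidates mirror the templates used in \Cref{3threepairs1} and \Cref{3threepairs2},'' and the closing admission that ``keeping the argument manageable will require systematically exploiting the $S_3$ symmetry'' make clear that this is a plan, not a proof. It is not safe to assume the templates transfer: the paper's own argument for this case shows that the choice of replacement depends delicately on whether $l(g_3) \le \Delta_3 - \Delta(\gamma_1,\gamma_2)$, on which of $l(g_1), l(g_3)$ is larger, on whether $\Delta(\gamma_2,\gamma_3)$ equals, exceeds, or falls short of $\min\{l(g_2),l(g_3)\}$, and on whether intermediate products such as $g_2g_3^{-1}$ are hyperbolic or elliptic; one subcase ($\Delta_1 \ge l(g_1)$ and $\Delta_2 \ge l(g_2)$ with all overlaps small) is even eliminated by an inequality contradiction rather than by any replacement at all. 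Each branch requires its own displacement estimate via \Cref{overlap}, \Cref{ellfix} or \cite[Proposition~1.7]{P}, and several of the resulting bounds are tight (e.g.\ $l(g_1g_3^{-1}g_2g_3^{-1}) \le l(g_1g_2)$ with equality possible), so the decrease in $L$ has to be extracted from a different term of the sum; none of this can be waved through by symmetry. Until those computations are actually carried out, the lemma is unproved.

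Separately, be cautious with the claim that relabelling reduces the orientation analysis to four cases: once you fix which axis witnesses the failure of the Ping Pong hypothesis and impose inequalities such as $l(g_1) \le l(g_3)$ to select a replacement, the $\gamma_2 \leftrightarrow \gamma_3$ symmetry is no longer freely available, and the bookkeeping does not collapse as cleanly as you suggest.
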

\begin{proof}
We may assume that $\gamma_1$ and $\gamma_2$ both intersect $\gamma_3$ with the same orientation, and that $g_3$ translates towards $\gamma_1$ towards $\gamma_2$. Firstly we suppose that $\Delta(\gamma_i,\gamma_j)<\min\{l(g_i),l(g_j)\}$ for all $i,j$. Let $\Delta_1$ denote the length of the shortest subpath of $\gamma_1$ containing both ${\rm Proj}_{\gamma_1}(\gamma_2)$ and ${\rm Proj}_{\gamma_1}(\gamma_3)$, so that $\Delta_1=\Delta(\gamma_1,\gamma_2)+\Delta(\gamma_1,\gamma_3)$. Similarly, we define $\Delta_2=\Delta(\gamma_1,\gamma_2)+\Delta(\gamma_2,\gamma_3)$ and $\Delta_3=\Delta(\gamma_1,\gamma_3)+\Delta(\gamma_2,\gamma_3)$.

Since $X$ does not satisfy the hypotheses of the Ping Pong Lemma, we may assume (by symmetry) that $l(g_3) \le \Delta_3$. After swapping $g_1$ and $g_2$, if necessary, we may also assume that $\Delta(\gamma_1,\gamma_3)\ge \Delta(\gamma_2,\gamma_3)$ and hence $l(g_3) \le 2\Delta(\gamma_1,\gamma_3)$. If $l(g_3)<\Delta_3-\Delta(\gamma_1,\gamma_2)$, then consider the replacement $g_1 \mapsto g_3g_1g_3^{-1}$, which changes the terms of $L(X)$ as in (\ref{1to313}). Since $g_3 \cdot \gamma_1$ intersects $\gamma_2$ along a path of length $\Delta_3-l(g_3)>\Delta(\gamma_1,\gamma_2)$, it follows from \Cref{overlap} that $L(X^3_{\{1\},\{1\}})<L(X)$ and $X$ is not minimal. So we may assume that $l(g_3) \ge \Delta_3-\Delta(\gamma_1,\gamma_2)$. 

Suppose now that $\Delta_1 \ge l(g_1)$ and $\Delta_2 \ge l(g_2)$. If $l(g_1)<\Delta_1-\Delta(\gamma_2,\gamma_3)$, then we may argue as above that $L(X^1_{\{3\},\{3\}})<L(X)$ and $X$ is not minimal. Thus we may assume that $l(g_1) \ge \Delta_1-\Delta(\gamma_2,\gamma_3)$. Similarly, we may assume that $l(g_2) \ge \Delta_2-\Delta(\gamma_1,\gamma_3)$. But then
\begin{align*}
l(g_1)+l(g_2)+l(g_3) & \ge \Delta(\gamma_1,\gamma_2)+\Delta(\gamma_1,\gamma_3)+\Delta(\gamma_2,\gamma_3) \\
& > l(g_1)+l(g_2)+l(g_3),
\end{align*} 
which is a contradiction. So either $\Delta_1<l(g_1)$ or $\Delta_2<l(g_2)$.

Consider the product replacement $g_1 \mapsto g_1g_3^{-1}$, which changes the terms of $L(X)$ as in (\ref{1to13}). As before, $l(g_1g_3^{-2}) \le l(g_1g_3)+l(g_3)-2\Delta(\gamma_1,\gamma_3)$. Observe that the axis of $g_1^{-1}g_2$ intersects $\gamma_3$ with opposite orientations along a path of length $\Delta(\gamma_1,\gamma_3)$; see the upper diagram of \Cref{case6}. Thus \Cref{overlap} shows that $l(g_1^{-1}g_2g_3)\le l(g_1g_2^{-1})+l(g_3)-2\Delta(\gamma_1,\gamma_3).$

\begin{figure}[h]
\centering

\begin{tikzpicture}
  [scale=0.8,auto=left] 
\draw[dashed,>->] (-2,0) to (11,0); \node at (-2.5,0) {$\gamma_3$}; \node at (11.5,0) {$\gamma_3$};
\draw[dashed,>-]  (7,4) to (5,2) to (5,0); \node at (7.3,4.3) {$\gamma_2$}; \draw[dashed,>-] (-1,3) to (2,0); \node at (-1.3,3.3) {$\gamma_1$};
\draw[dashed,->] (8,0) to (11,3); \node at (11.3,3.3) {$\gamma_2$}; \draw[dashed,->] (5,2) to (3,4) ; \node at (2.7,4.3) {$\gamma_1$};
 \node[circle,inner sep=0pt,minimum size=3,fill=black] (1) at (2, 0) {}; \node at (2,-0.4) {$p$};
\node[circle,inner sep=0pt,minimum size=3,fill=black] (1) at (5, 0) {}; \node at (5,-0.4) {$q$};
  \node[circle,inner sep=0pt,minimum size=3,fill=black] (1) at (5, 2) {}; \node at (5,2.4) {$x$};
\node[circle,inner sep=0pt,minimum size=3,fill=black] (1) at (8, 0) {};

\draw[dotted,|-|] (2,-1.5) to (8,-1.5); \node at (5,-2) {$\Delta_3 \ge l(g_3)$};
\draw[dotted,|-|] (2,-3) to (4.9,-3); \node at (2.8,-3.5) {$l(g_1)> \Delta(\gamma_1,\gamma_3)$};
\draw[dotted,|-|] (5.1,-3) to (8,-3); \node at (6.5,-3.5) {$\Delta(\gamma_2,\gamma_3)$};

\draw[dotted,|-|] (2.1,0.3) to (4.7,0.3) to (4.7,1.9); \node at (3.4,0.9) {$l(g_1)>\Delta_1$};
\draw[dotted,|-|] (7.9,0.3) to (5.3,0.3) to (5.3,1.9); \node at (6.6,0.9) {$\Delta_2 > l(g_2)$};

 \node[circle,inner sep=0pt,minimum size=3,fill=black] (1) at (0.5, 1.5) {}; \node at (1.2,1.7) {$g_1^{-1}q$};
  \draw[dashed, ->] (0.5,1.5) to (0.5,3.5); \node at (0.5,4) {${\rm Axis}(g_1^{-1}g_2)$};
 \node[circle,inner sep=0pt,minimum size=3,fill=black] (1) at (6, 3) {}; \node at (6.7,2.8) {$g_2^{-1}x$};
  \draw[dashed, >-] (6,5) to (6,3); \node at (6,5.5) {${\rm Axis}(g_1^{-1}g_2)$};
  
   \node[circle,inner sep=0pt,minimum size=3,fill=black] (1) at (7, 0) {}; \node at (6.6,-0.4) {$g_2x$};
     \draw[dashed, >-] (8.5,-1.5) to (7,0); \node at (10,-1.7) {${\rm Axis}(g_1^{-1}g_2^{-1})$};
 \node[circle,inner sep=0pt,minimum size=3,fill=black] (1) at (0, 0) {}; \node at (0,0.4) {$g_1x$};
      \draw[dashed, ->] (0,0) to (-1.5,-1.5); \node at (-2,-2) {${\rm Axis}(g_1^{-1}g_2^{-1})$};

\end{tikzpicture}

\begin{tikzpicture}
  [scale=0.8,auto=left] 

\draw[dashed,>->] (-2,0) to (11,0); \node at (-2.5,0) {$\gamma_3$}; \node at (11.5,0) {$\gamma_3$};
\draw[dashed,>-]  (7,4) to (5,2) to (5,0); \node at (7.3,4.3) {$\gamma_2$}; \draw[dashed,>-] (-1,3) to (2,0); \node at (-1.3,3.3) {$\gamma_1$};
\draw[dashed,->] (8,0) to (11,3); \node at (11.3,3.3) {$\gamma_2$}; \draw[dashed,->] (5,2) to (3,4) ; \node at (2.7,4.3) {$\gamma_1$};
 \node[circle,inner sep=0pt,minimum size=3,fill=black] (1) at (2, 0) {}; \node at (2,-0.4) {$p$};
\node[circle,inner sep=0pt,minimum size=3,fill=black] (1) at (5, 0) {}; \node at (5,-0.4) {$q$};
  \node[circle,inner sep=0pt,minimum size=3,fill=black] (1) at (5, 2) {}; \node at (5,2.4) {$x$};
\node[circle,inner sep=0pt,minimum size=3,fill=black] (1) at (8, 0) {};

\draw[dotted,|-|] (2,-3) to (4.9,-3); \node at (2.8,-3.5) {$l(g_1) < \Delta(\gamma_1,\gamma_3)$};
\draw[dotted,|-|] (5.1,-3) to (8,-3); \node at (6.5,-3.5) {$\Delta(\gamma_2,\gamma_3)$};
\draw[dotted,|-|] (5.3,0.1) to (5.3,2); \node at (6.5,1) {$\Delta(\gamma_1,\gamma_2)$};

 \node[circle,inner sep=0pt,minimum size=3,fill=black] (1) at (3.5, 0) {}; \node at (4,0.5) {$g_1^{-1}q$};
  \draw[dashed, ->] (3.5,0) to (2,1.5); \node at (2,2) {${\rm Axis}(g_1^{-1}g_2)$};
 \node[circle,inner sep=0pt,minimum size=3,fill=black] (1) at (6, 3) {}; \node at (6.7,2.8) {$g_2^{-1}x$};
  \draw[dashed, >-] (6,5) to (6,3); \node at (6,5.5) {${\rm Axis}(g_1^{-1}g_2)$};
  
    \draw[dashed, ->] (3.5,0) to (2,-1.5); \node at (2,-2) {${\rm Axis}(g_1^{-1}g_3)$};
     \node[circle,inner sep=0pt,minimum size=3,fill=black] (1) at (0, 0) {}; \node at (0,0.5) {$g_3^{-1}g_1p$};
       \draw[dashed, >-] (-1.5,-1.5) to (0,0); \node at (-2,-2) {${\rm Axis}(g_1^{-1}g_3)$};

\end{tikzpicture}

\caption{All three pairs of axes $\gamma_1, \gamma_2,\gamma_3$ intersect, and no axis contains all three paths of intersection.}
\label{case6}
\end{figure}
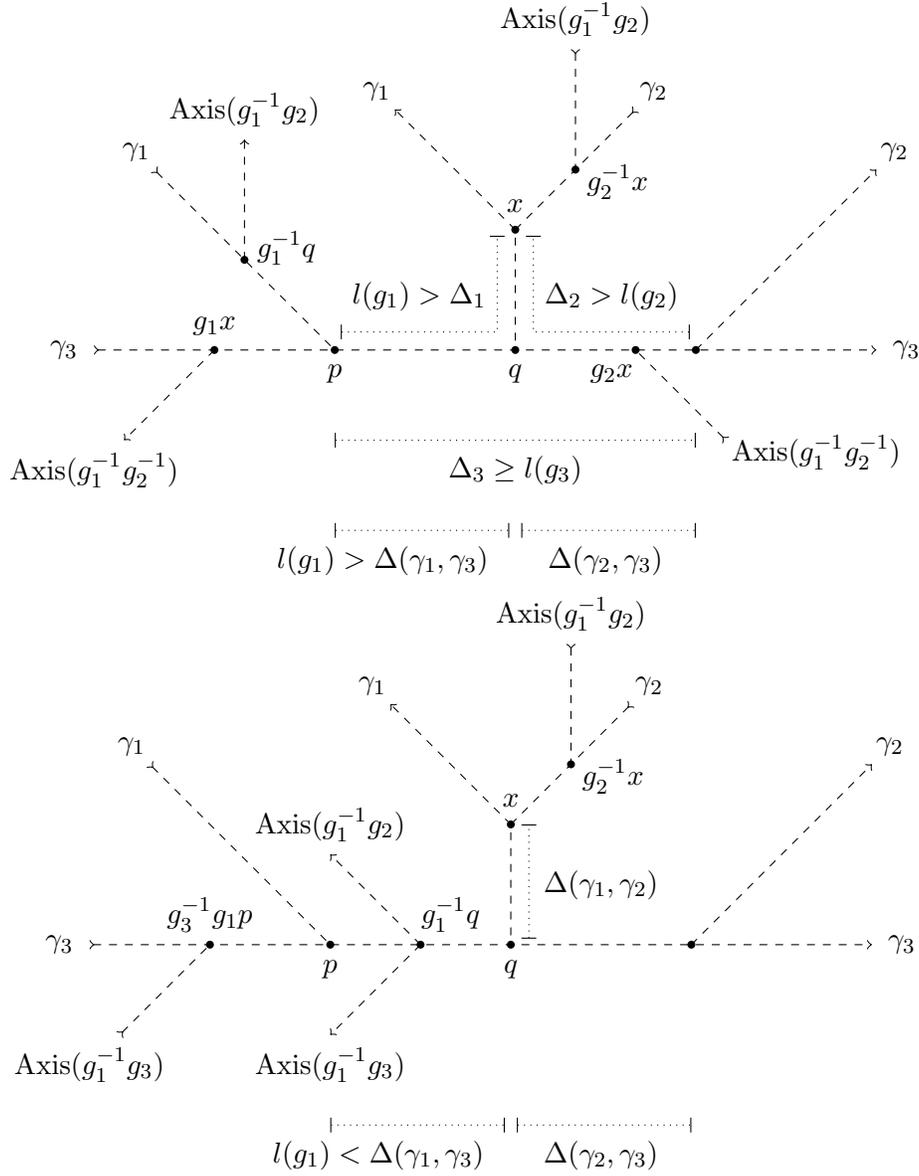

If $\Delta_1<l(g_1)$, then the axis of $g_1^{-1}g_2^{-1}$ intersects $\gamma_3$ with opposite orientations along a subpath of length at least $\Delta(\gamma_1,\gamma_3)$; see the upper diagram of \Cref{case6} for the case where $\Delta_2 > l(g_2)$. \Cref{overlap} shows that 
\begin{align*}
l(g_1^{-1}g_2^{-1}g_3) &\le l(g_1g_2)+l(g_3)-2\Delta(\gamma_1,\gamma_3) \\
&\le l(g_1g_2).
\end{align*}

Similarly, if $\Delta_2<l(g_2)$, then the axis of $g_1^{-1}g_2^{-1}$ intersects $\gamma_3$ with opposite orientations along a subpath of length at least $\Delta(\gamma_2,\gamma_3)$ and \Cref{overlap} shows that $l(g_1^{-1}g_2^{-1}g_3) \le l(g_1g_2)+l(g_3)-2\Delta(\gamma_2,\gamma_3)$. In either case, it follows that $L(X^3_{\{\},\{1\}})<L(X)$ since $l(g_3)\le\Delta_3$. We conclude that $X$ is not minimal when $\Delta(\gamma_i,\gamma_j)<\min\{l(g_i),l(g_j)\}$ for all $i,j$.

Without loss of generality, we may now suppose that $\Delta(\gamma_1,\gamma_3)\ge l(g_1)$. If $l(g_1) \le l(g_3)$, then consider the product replacements $g_2 \mapsto g_2g_1^{-1}$ and $g_3 \mapsto g_3g_1^{-1}$, which preserve all terms in the expression of $L(X)$ except for the following:
\begin{align}\label{2to21and3to31}
l(g_1g_2) &\mapsto l(g_2) \mapsto l(g_1g_2^{-1}) \mapsto l(g_1^2g_2^{-1})  \notag \\
l(g_1g_3) &\mapsto l(g_3) \mapsto l(g_1g_3^{-1}) \mapsto l(g_1^2g_3^{-1})   \\
l(g_2g_3)  &\mapsto l(g_2g_1^{-1}g_3g_1^{-1})=l(g_1^{-1}g_3g_1^{-1}g_2). \notag 
\end{align}
\Cref{overlap} shows that $l(g_1^2g_2^{-1}) \le l(g_1g_2)+l(g_1)-2\Delta(\gamma_1,\gamma_2)$ and $l(g_1^2g_3^{-1}) \le l(g_1g_3)-l(g_1)$, so it remains to compare $l(g_1^{-1}g_3g_1^{-1}g_2)$ and $l(g_2g_3)$. By \Cref{overlap}, $l(g_1^{-1}g_3)=\max\{0,l(g_3)-l(g_1)-2\Delta'\}$, where $\Delta'$ is the length of the path $\gamma_3 \cap g_1 \cdot \gamma_3$ if $\Delta(\gamma_1,\gamma_3)=l(g_1)$, and $\Delta'=0$ otherwise. Let $q$ denote the terminal vertex of the path $\gamma_1 \cap \gamma_3$. 

If $g_1^{-1}g_3$ is hyperbolic, then its axis is at distance $\Delta'$ from $g_1^{-1}q$; see the lower diagram of \Cref{case6} for the case $\Delta(\gamma_1,\gamma_3)>l(g_1)$ (and hence $\Delta'=0$), and see \Cref{bigint3} otherwise. Since $g_1^{-1}g_2$ is hyperbolic and its axis contains $g_1^{-1}q$, \Cref{overlap} shows that 
\begin{align*}
l(g_1^{-1}g_3g_1^{-1}g_2)&\le l(g_1^{-1}g_3)+l(g_1^{-1}g_2)+2\Delta' \\
&= l(g_2g_3).
\end{align*}

On the other hand, if $g_1^{-1}g_3$ is elliptic, then by \Cref{ellfix} it either fixes $g_1^{-1}q$ (if $l(g_1)=l(g_3)$) or a vertex at distance $\frac{l(g_3)-l(g_1)}{2}$ from $g_1^{-1}q$ (otherwise). Hence \cite[Proposition 1.7]{P} shows that
\begin{align*}
l(g_1^{-1}g_3g_1^{-1}g_2)&\le l(g_1^{-1}g_3)+l(g_1^{-1}g_2)+l(g_3)-l(g_1) \\
&= l(g_2g_3).
\end{align*}

In either case, $X$ is not minimal, so we may suppose that $l(g_3)<l(g_1)$. Consider the product replacements $g_1 \mapsto g_1g_3^{-1}$ and $g_2 \mapsto g_2g_3^{-1}$, which changes the terms of $L(X)$ as in (\ref{1to13and2to23}). It follows from \Cref{overlap} that $l(g_1g_3^{-2})< l(g_1g_3)-l(g_3)$ and $l(g_2g_3^{-2})\le l(g_2g_3)+l(g_3)-2\Delta(\gamma_2,\gamma_3)$, so it remains to compare $l(g_1g_3^{-1}g_2g_3^{-1})$ and $l(g_1g_2)$. By \Cref{overlap} $l(g_1g_3^{-1})=l(g_1)-l(g_3)$, and note that the axis of $g_1g_3^{-1}$ contains the terminal vertex $q$ of $\gamma_1 \cap \gamma_3$; see \Cref{bigint}. If $\Delta(\gamma_2,\gamma_3)<\min\{l(g_2),l(g_3)\}$, then $g_2g_3^{-1}$ is hyperbolic with axis at distance $\Delta(\gamma_2,\gamma_3)$ from $q$ (which is also the initial vertex of $\gamma_2 \cap \gamma_3$); see \Cref{smallint}. Thus \Cref{overlap} shows
\begin{align*}
l(g_1g_3^{-1}g_2g_3^{-1}) &\le l(g_1g_3^{-1})+l(g_2g_3^{-1})+2\Delta(\gamma_2,\gamma_3) \\
&=l(g_1g_2)
\end{align*}
and it follows that $L(X^3_{\{\},\{1,2\}})<L(X)$, so $X$ is not minimal. Hence we may assume for the remainder of the proof that $\Delta(\gamma_2,\gamma_3) \ge \min\{l(g_2),l(g_3)\}$.

Let us first suppose that $l(g_2) \le l(g_3)$. \Cref{overlap} shows that $l(g_2g_3^{-1})=\max\{0,l(g_3)-l(g_2)-2\Delta'\}$, where $\Delta'$ is the length of the path $\gamma_3 \cap g_2 \cdot \gamma_3$ if $\Delta(\gamma_2,\gamma_3)=l(g_2)$, and $\Delta'=0$ otherwise. If $g_2g_3^{-1}$ is hyperbolic, then its axis is at distance $l(g_2)+\Delta'$ from $q$; see \Cref{bigint,bigint3}. Thus \Cref{overlap} shows that
\begin{align*}
l(g_1g_3^{-1}g_2g_3^{-1}) &= l(g_1g_3^{-1})+l(g_2g_3^{-1})+2(l(g_2)+\Delta') \\
&=l(g_1g_2).
\end{align*}
On the other hand, if $g_2g_3^{-1}$ is elliptic, then by \Cref{ellfix} it fixes a vertex at distance $l(g_2)+\frac{l(g_3)-l(g_2)}{2}$ from $q$ and \cite[Proposition 1.7]{P} shows that 
\begin{align*}
l(g_1g_3^{-1}g_2g_3^{-1}) &\le l(g_1g_3^{-1})+l(g_2g_3^{-1})+2l(g_2)+l(g_3)-l(g_2) \\
&=l(g_1g_2).
\end{align*}
We conclude that $X$ is not minimal if $l(g_2) \le l(g_3)$.

Similarly, if $l(g_3)<l(g_2)$, then $l(g_2g_3^{-1})=\max\{0,l(g_2)-l(g_3)-2\Delta'\}$, where $\Delta'$ is the length of the path $\gamma_2 \cap g_3 \cdot \gamma_2$ if $\Delta(\gamma_2,\gamma_3)=l(g_3)$, and $\Delta'=0$ otherwise. Either the axis of $g_2g_3^{-1}$ is at distance $l(g_3)+\Delta'$ from $q$, or $g_2g_3^{-1}$ fixes a vertex at distance $l(g_3)+\frac{l(g_2)-l(g_3)}{2}$ from $q$, and \Cref{overlap} (respectively \cite[Proposition 1.7]{P}) again shows that $l(g_1g_3^{-1}g_2g_3^{-1}) \le l(g_1g_2)$. Hence $X$ is not minimal if $\Delta(\gamma_2,\gamma_3) \ge \min\{l(g_2),l(g_3)\}$.
\end{proof}

We conclude by proving \Cref{thm:conj}:

\begin{proof}[Proof of Theorem $\ref{thm:conj}$]
Let $X$ be a pair or triple of hyperbolic isometries of $T$ which do not satisfy the hypotheses of the Ping Pong Lemma. Since $X$ must satisfy the hypotheses of exactly one of Lemmas \ref{2}-\ref{3threepairs3}, we conclude that $X$ is not minimal.
\end{proof}

\section*{Acknowledgements}
The author would like to thank Eamonn O'Brien, Jeroen Schillewaert and the referee for their feedback on previous versions of this paper. The author is supported by a University of Auckland FRDF grant and the Woolf Fisher Trust.

\bibliographystyle{plain}

\bibliography{references}

\newpage
\appendix
\section{The axes of two hyperbolic isometries}
\label{app}

\renewcommand{\thesection}{\Alph{section}}
\setcounter{figure}{0}

Let $g_1$ and $g_2$ be hyperbolic isometries of a simplicial tree, with axes denoted by $\gamma_1$ and $\gamma_2$. Suppose that $l(g_1)\le l(g_2)$, and $\gamma_1 \cap \gamma_2$ is either empty or finite. The following six figures indicate the possible interactions between $\gamma_1, \gamma_2$. In the first five, $g_1g_2$ is hyperbolic and the axis of $g_1g_2$ is shown. The last demonstrates a case where $g_1g_2$ is elliptic and fixes a vertex.

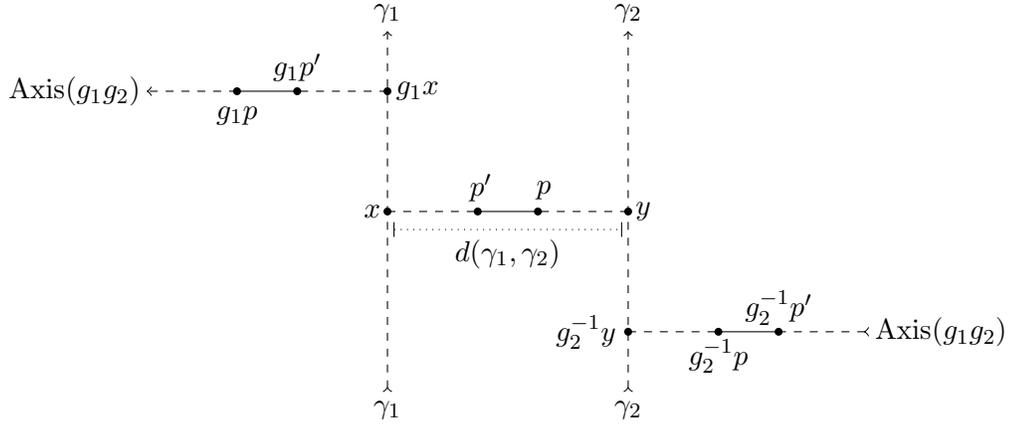
\begin{figure}[h]
\centering
\begin{tikzpicture}
  [scale=0.8,auto=left] 

    \node[circle,inner sep=0pt,minimum size=3,fill=black] (1) at (-4,0) {};
   \node[circle,inner sep=0pt,minimum size=3,fill=black] (1) at (-4,2) {};
                       \draw [dashed, >->] (-4,-3) to (-4,3);
       \node at (-4, 3.3) {$\gamma_1$}; \node at (-4, -3.3) {$\gamma_1$};
          \node at (-4.25,0) {$x$}; \node at (-3.5,2) {$g_1x$};  

     \draw [dotted, |-|] (-3.9,-0.3) to (-0.1,-0.3); \node at (-2, -0.7) {$d(\gamma_1,\gamma_2)$};
     
         \node[circle,inner sep=0pt,minimum size=3,fill=black] (1) at (0,0) {};
              \draw [dashed,>->] (0,-3) to (0,3) ;
                            \node[circle,inner sep=0pt,minimum size=3,fill=black] (1) at (0,-2) {};
\node at (0, 3.3) {$\gamma_2$};  \node at (0, -3.3) {$\gamma_2$};
  \node at (0.25,0) {$y$};  \node at (-0.7, -2) {$g_2^{-1}y$};

 \node[circle,inner sep=0pt,minimum size=3,fill=black] (1) at (-2.5,0) {};
  \node[circle,inner sep=0pt,minimum size=3,fill=black] (1) at (-1.5,0) {};
  \draw [dashed] (-4,0) to (-2.5,0); \draw (-2.5,0) to (-1.5,0); \draw [dashed] (0,0) to (-1.5,0);
  \node at (-2.45,0.4) {$p'$};   \node at (-1.4,0.35) {$p$};
 
  \node[circle,inner sep=0pt,minimum size=3,fill=black] (1) at (-5.5,2) {};
   \node[circle,inner sep=0pt,minimum size=3,fill=black] (1) at (-6.5,2) {};
  \draw (-5.5, 2) to (-6.5, 2); \draw [dashed] (-4, 2) to (-5.5,2);
  \node at (-5.5,2.4) {$g_1p'$}; \node at (-6.5,1.6) {$g_1p$};
   \draw [dashed,->] (-6.5,2) to (-8,2); \node at (-9.2,2) {${\rm Axis}(g_1g_2)$};
   
    \node[circle,inner sep=0pt,minimum size=3,fill=black] (1) at (1.5,-2) {};
   \node[circle,inner sep=0pt,minimum size=3,fill=black] (1) at (2.5,-2) {};
  \draw (1.5, -2) to (2.5, -2); \draw [dashed] (0, -2) to (1.5,-2);
  \node at (1.5,-2.4) {$g_2^{-1}p$}; \node at (2.5,-1.6) {$g_2^{-1}p'$};
    \draw [dashed,>-] (4,-2) to (2.5,-2); \node at (5.2,-2) {${\rm Axis}(g_1g_2)$};

\end{tikzpicture} 
\caption{$\gamma_1$ and $\gamma_2$ do not intersect.} \label{noint}
\end{figure}

\begin{figure}[h]
\centering
\begin{tikzpicture}
  [scale=0.8,auto=left]

 \node[circle,inner sep=0pt,minimum size=3,fill=black] (1) at (11,-1) {};
  \node[circle,inner sep=0pt,minimum size=3,fill=black] (1) at (11,1) {};
  \node[circle,inner sep=0pt,minimum size=3,fill=black] (1) at (10,1.5) {};
  \node[circle,inner sep=0pt,minimum size=3,fill=black] (1) at (12,1.5) {};
  \node[circle,inner sep=0pt,minimum size=3,fill=black] (1) at (12,-1.5) {};
   \node[circle,inner sep=0pt,minimum size=3,fill=black] (1) at (10,-1.5) {};
     \draw [dashed] (11,-1) to (11,1); \draw (12,1.5) to (11,1) to (10,1.5);  \draw (12,-1.5) to (11,-1) to (10,-1.5);

\draw [dashed,->]  (12,1.5) to (12.6,4.5);  \draw [dashed,->]  (10,1.5) to (9.4,4.5);
\draw [dashed,>-] (9.4, -4.5) to (10,-1.5);   \draw [dashed,>-]  (12.6,-4.5) to  (12,-1.5);
 \node at (9.4, 4.8) {$\gamma_1$}; \node at (12.6, 4.8) {$\gamma_2$};  \node at (9.4, -4.8) {$\gamma_1$}; \node at (12.6, -4.8) {$\gamma_2$};
 
  \node at (9.7, -1.5) {$p'$};  \node at (12.4, 1.5) {$q'$};  \node at (11, -1.4) {$p$};   \node at (11, 1.4) {$q$};
  
  \node[circle,inner sep=0pt,minimum size=3,fill=black] (1) at (12.4,-3.5) {}; \node at (11.7,-3.5) {$g_2^{-1}p$};
   \node[circle,inner sep=0pt,minimum size=3,fill=black] (1) at (13.4,-3.5) {};   \draw (12.4,-3.5) to (13.4,-3.5);
   \node at (13.4,-3.9) {$g_2^{-1}p'$};   \draw [dashed,>-] (14.4,-3.5) to (13.4,-3.5);
\node at (15.6, -3.5) {${\rm Axis}(g_1g_2)$};
            
    \node[circle,inner sep=0pt,minimum size=3,fill=black] (1) at (9.8,2.5) {}; \node at (10.35,2.5) {$g_1q$};
      \node[circle,inner sep=0pt,minimum size=3,fill=black] (1) at (8.8,2.5) {};   \draw (8.8,2.5) to (9.8,2.5);  
          \node at (8.8,2.9) {$g_1q'$}; 
              \draw [dashed,->]  (8.8,2.5) to (7.8,2.5);  
            \node at (6.6, 2.5) {${\rm Axis}(g_1g_2)$};

\end{tikzpicture} 
\caption{$\gamma_1$ and $\gamma_2$ intersect with the same orientation.} \label{overlapsame}
\end{figure}
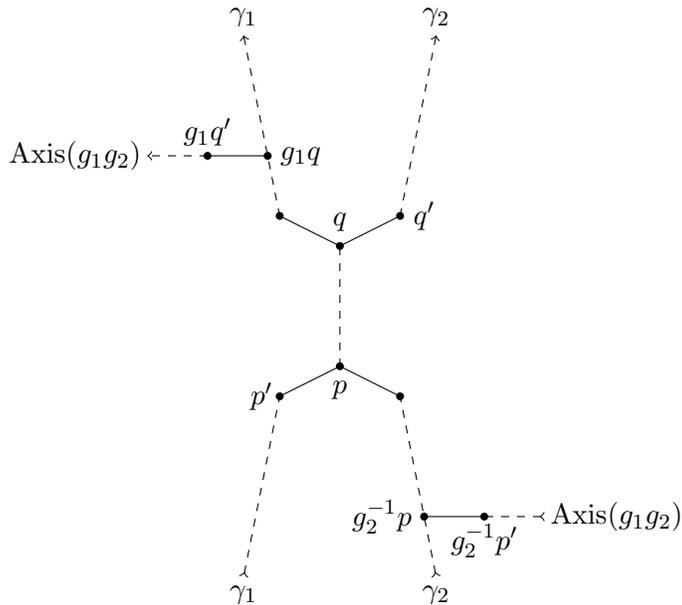

\begin{figure}[h]
\centering
\begin{tikzpicture}
  [scale=0.8,auto=left] 
 \node[circle,inner sep=0pt,minimum size=3,fill=black] (1) at (11,-1) {};
  \node[circle,inner sep=0pt,minimum size=3,fill=black] (1) at (11,1) {};
  \node[circle,inner sep=0pt,minimum size=3,fill=black] (1) at (10,1.5) {};
  \node[circle,inner sep=0pt,minimum size=3,fill=black] (1) at (12,1.5) {};
  \node[circle,inner sep=0pt,minimum size=3,fill=black] (1) at (12,-1.5) {};
   \node[circle,inner sep=0pt,minimum size=3,fill=black] (1) at (10,-1.5) {};
     \draw [dashed] (11,-1) to (11,1); \draw (12,1.5) to (11,1) to (10,1.5);  \draw (12,-1.5) to (11,-1) to (10,-1.5); 
     
          \draw [dotted, |-|] (11.3,-1) to (11.3,1); \node at (12.4, 0) {$\Delta < l(g_1)$};
                  
\draw [dashed,>-]  (12.8,5.5) to (12,1.5);  \draw [dashed,->]  (10,1.5) to (9.2,5.5);
\draw [dashed,>-] (9.4, -4.5) to (10,-1.5);   \draw [dashed,->]  (12,-1.5) to (12.6,-4.5);
 \node at (9.2, 5.8) {$\gamma_1$}; \node at (12.8, 5.8) {$\gamma_2$};  \node at (9.4, -4.8) {$\gamma_1$}; \node at (12.6, -4.8) {$\gamma_2$};
 
  \node at (9.7, -1.5) {$p'$};  \node at (12.4, -1.5) {$p''$};   \node at (11, -1.4) {$p$};

  \node[circle,inner sep=0pt,minimum size=3,fill=black] (1) at (12.6,4.5) {}; \node at (11.8,4.5) {$g_2^{-1}p$};
    \node[circle,inner sep=0pt,minimum size=3,fill=black] (1) at (9.6,3.5) {}; \node at (10.05,3.5) {$g_1p$};
      \node[circle,inner sep=0pt,minimum size=3,fill=black] (1) at (12.4,3.5) {}; \node at (11.6,3.5) {$g_2^{-1}p''$};
    \node[circle,inner sep=0pt,minimum size=3,fill=black] (1) at (9.8,2.5) {}; \node at (10.3,2.5) {$g_1p'$};
    \draw (9.8,2.5) to (9.6,3.5);   \draw (12.6,4.5) to (12.4,3.5);
   \node[circle,inner sep=0pt,minimum size=3,fill=black] (1) at (13.6,4.5) {};   \draw (12.6,4.5) to (13.6,4.5);
      \node[circle,inner sep=0pt,minimum size=3,fill=black] (1) at (8.6,3.5) {};   \draw (8.6,3.5) to (9.6,3.5);  
          \node at (8.6,3.9) {$g_1p''$};  \node at (13.6,4.9) {$g_2^{-1}p'$};
              \draw [dashed,->] (8.6,3.5) to (7.6,3.5);     \draw [dashed,>-] (14.6,4.5) to (13.6,4.5);
            \node at (6.4, 3.5) {${\rm Axis}(g_1g_2)$}; \node at (15.8, 4.5) {${\rm Axis}(g_1g_2)$};

\end{tikzpicture} 
\caption{$\gamma_1$ and $\gamma_2$ intersect with opposite orientations along a path of length $\Delta<\min\{l(g_1),l(g_2)\}=l(g_1).$} \label{smallint}
\end{figure}

\begin{figure}[h]
\centering
\begin{tikzpicture}
  [scale=0.8,auto=left] 

\draw [dotted, |-|] (12.4,2) to (12.4,-1); \node at (13.7,0.5) {$\Delta > l(g_1)$};

 \node[circle,inner sep=0pt,minimum size=3,fill=black] (1) at (11,-1) {};
  \node[circle,inner sep=0pt,minimum size=3,fill=black] (1) at (11,2) {};
  \node[circle,inner sep=0pt,minimum size=3,fill=black] (1) at (10,2.5) {};
  \node[circle,inner sep=0pt,minimum size=3,fill=black] (1) at (12,2.5) {};
  \node[circle,inner sep=0pt,minimum size=3,fill=black] (1) at (12,-1.5) {};
   \node[circle,inner sep=0pt,minimum size=3,fill=black] (1) at (10,-1.5) {};
     \draw [dashed] (11,-1) to (11,2); \draw (12,2.5) to (11,2) to (10,2.5);  \draw (12,-1.5) to (11,-1) to (10,-1.5); 
                  
\draw [dashed,>-]  (12.4,4.5) to (12,2.5);  \draw [dashed,->]  (10,2.5) to (9.6,4.5);
\draw [dashed,>-] (9.6, -3.5) to (10,-1.5);   \draw [dashed,->]  (12,-1.5) to  (12.4,-3.5);
 \node at (9.6, 4.8) {$\gamma_1$}; \node at (12.4, 4.8) {$\gamma_2$};  \node at (9.6, -3.8) {$\gamma_1$}; \node at (12.4, -3.8) {$\gamma_2$};
 
 \node at (12.4, -1.5) {$p'$};  \node at (12.4, 2.5) {$q'$};  \node at (11, -1.4) {$p$};   \node at (11, 2.4) {$q$};

    \node[circle,inner sep=0pt,minimum size=3,fill=black] (1) at (11,0) {}; \node at (11.5,0) {$g_1p$};
      \node[circle,inner sep=0pt,minimum size=3,fill=black] (1) at (10,0) {};   \draw (10,0) to (11,0);  
          \node at (10,-0.4) {$g_1p'$}; 
              \draw [dashed,->] (10,0) to (8.5,0);  
            \node at (7.3, 0) {${\rm Axis}(g_1g_2)$};
            
  \node[circle,inner sep=0pt,minimum size=3,fill=black] (1) at (12.2,3.5) {}; \node at (11.15,3.55) {$g_2^{-1}g_1^{-1}q$};
   \node[circle,inner sep=0pt,minimum size=3,fill=black] (1) at (13.2,3.5) {};   \draw (12.2,3.5) to (13.2,3.5);
          \node at (13.45,3.9) {$g_2^{-1}g_1^{-1}q'$};
               \draw [dashed,>-] (14.7,3.5) to (13.2,3.5);
        \node at (15.9, 3.5) {${\rm Axis}(g_1g_2)$};

\end{tikzpicture} 
\caption{$\gamma_1$ and $\gamma_2$ intersect with opposite orientations along a path of length $\Delta>\min\{l(g_1),l(g_2)\}=l(g_1)\neq l(g_2).$} \label{bigint}
\end{figure}

\begin{figure}[h]
\centering
\begin{tikzpicture}
  [scale=0.8,auto=left]

 \node[circle,inner sep=0pt,minimum size=3,fill=black] (1) at (3,-1) {};
 \node[circle,inner sep=0pt,minimum size=3,fill=black] (1) at (3,1) {};
 \node[circle,inner sep=0pt,minimum size=3,fill=black] (1) at (3.6,2.5) {};
 \node[circle,inner sep=0pt,minimum size=3,fill=black] (1) at (3.6,-2.5) {};
     \draw [dashed] (3,-1) to (3,1);  
     \draw [dashed] (3,1) to (3.6,2.5) ;   \draw [dashed] (3,-1) to (3.6,-2.5) ;         
  \node at (0.7, 6.8) {$\gamma_1$}; \node at (1.1, -5.8) {$\gamma_1$}; 
\draw [dashed,->]  (3,1) to (0.8,6.5);         \draw [dashed,>-] (1.2, -5.5) to (3,-1);

   \node[circle,inner sep=0pt,minimum size=3,fill=black] (1) at (4,3.5) {};
 \node[circle,inner sep=0pt,minimum size=3,fill=black] (1) at (4.4,4.5) {};
  \node[circle,inner sep=0pt,minimum size=3,fill=black] (1) at (4.8,5.5) {};

   \node[circle,inner sep=0pt,minimum size=3,fill=black] (1) at (4,-3.5) {};
      \draw (3.6,-2.5) to (4,-3.5);    
   \node at (5.3, 6.8) {$\gamma_2$}; \node at (4.9, -5.8) {$\gamma_2$};                    
   \draw [dashed,>-]  (5.2,6.5) to (4.8,5.5); \draw [dashed,->] (4, -3.5) to (4.8, -5.5);          
        
 \draw (3.6,2.5) to (4,3.5) ;      \draw (4.4,4.5) to (4.8,5.5) ;     \draw [dashed] (4,3.5) to (4.4,4.5);

  \node[circle,inner sep=0pt,minimum size=3,fill=black] (1) at (4.9,-2.5) {};      \draw (3.6,-2.5) to (4.9, -2.5);
    \node[circle,inner sep=0pt,minimum size=3,fill=black] (1) at (4.9,2.5) {};         \draw (3.6,2.5) to (4.9, 2.5);                            
    \node[circle,inner sep=0pt,minimum size=3,fill=black] (1) at (6.1,5.5) {};         \draw (4.8,5.5) to (6.1, 5.5);   

  \node at (2.7, -1) {$p$}; \node at (2.5, 1) {$g_1p$}; 
   \node at (3.3, -2.5) {$r$};  \node at (3.6, -3.4) {$r''$};  \node at (5, -2.1) {$r'$};
      \node at (3.1, 2.5) {$g_1r$};  \node at (3.3, 3.6) {$g_1r'$};  \node at (5, 2.1) {$g_1r''$};     
 \node at (3.6, 4.6) {$g_2^{-1}r''$};     \node at (4.2, 5.6) {$g_2^{-1}r$}; \node at (6.1,5.9) {$g_2^{-1}r'$};  
                                  
 \draw [dashed, >-] (7.6,5.5) to (6.1,5.5); \node at (8.8,5.5) {${\rm Axis}(g_1g_2)$};                 
  \draw [dashed, ->] (4.9,2.5) to (6.4,2.5); \node at (7.6,2.5) {${\rm Axis}(g_1g_2)$};              
          
\node[circle,inner sep=0pt,minimum size=3,fill=black] (1) at (2.2,-3) {};   \node at (1.6, -2.6) {$g_1^{-1}p$};
 \draw [dashed, ->] (2.2,-3) to (-0.3,-3); \node at (-1.3,-3) {$g_1^{-1}\cdot \gamma_2$};                 
  \draw [dashed, >-] (6.4,-2.5) to (4.9,-2.5); \node at (7.4,-2.5) {$g_1^{-1}\cdot \gamma_2$};                                       
       
  \node at (0.6, 0) {$l(g_1)=\Delta$}; \draw [dotted, |-|] (1.7, 1) to (1.7, -1);
  \node at (5.2,-1.3) {$\Delta'< \frac{l(g_2)-l(g_1)}{2}$}; \draw [dotted, |-|] (3.25,-0.9) to (3.85,-2.35);

\end{tikzpicture}
\caption{$\gamma_1$ and $\gamma_2$ intersect with opposite orientations along a path of length $\Delta=\min\{l(g_1), l(g_2)\}=l(g_1)$. The axes $\gamma_2$ and $g_1^{-1}\cdot \gamma_2$ (and hence $\gamma_2$ and $g_1 \cdot \gamma_2$) intersect along a path of length $\Delta' <\frac{l(g_2)-l(g_1)}{2}.$} \label{bigint3}
\end{figure}
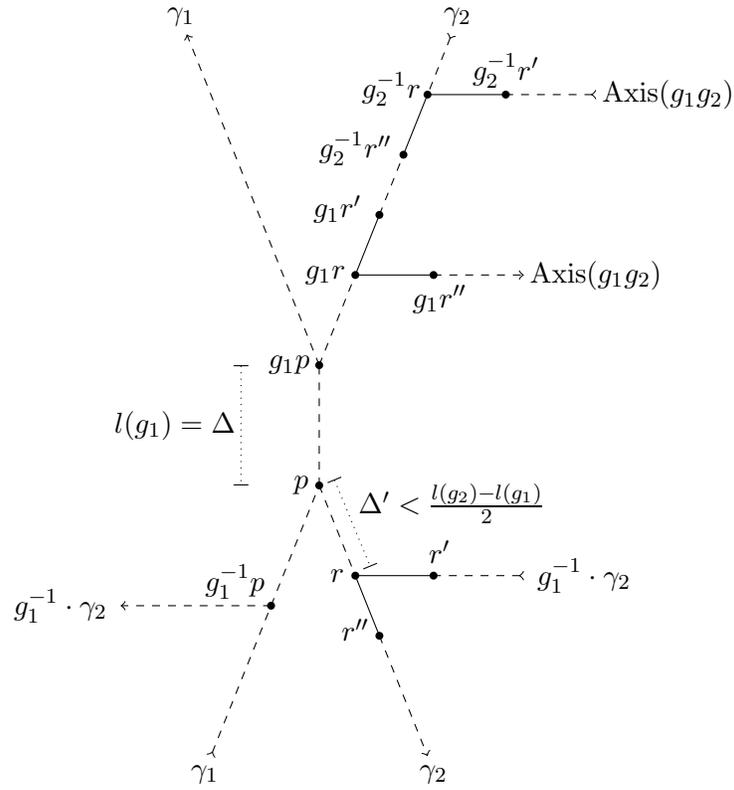

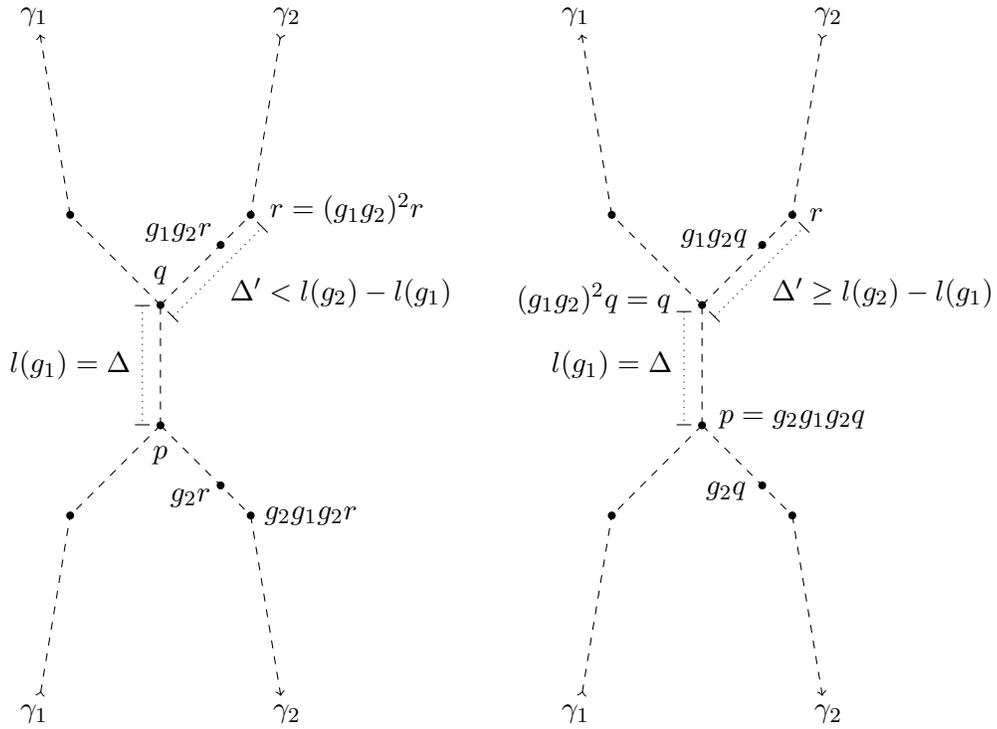
\begin{figure}[h]
\centering
\begin{tikzpicture}
  [scale=0.8,auto=left] 

 \node[circle,inner sep=0pt,minimum size=3,fill=black] (1) at (-3,-1) {};
 \node[circle,inner sep=0pt,minimum size=3,fill=black] (1) at (-3,1) {};
\draw [dashed] (-3,-1) to (-3,1); \node at (-3, 1.5) {$q$}; \node at (-3, -1.5) {$p$}; 
 \node[circle,inner sep=0pt,minimum size=3,fill=black] (1) at (-2, 2) {}; \node at (-2.7, 2.2) {$g_1g_2r$};
  \node[circle,inner sep=0pt,minimum size=3,fill=black] (1) at (-1.5, 2.5) {}; \node at (0.1, 2.6) {$r=(g_1g_2)^2r$};
 \node[circle,inner sep=0pt,minimum size=3,fill=black] (1) at (-2,-2) {}; \node at (-2.5, -2.2) {$g_2r$};
 \node[circle,inner sep=0pt,minimum size=3,fill=black] (1) at (-1.5,-2.5) {}; \node at (-.5, -2.5) {$g_2g_1g_2r$};  
  \draw [dashed] (-3,1) to (-1.5, 2.5);    \draw [dashed] (-3,-1) to (-1.5, -2.5);
  \draw [dashed, >-] (-1,5.5) to (-1.5,2.5); \draw [dashed, ->] (-1.5,-2.5) to (-1,-5.5);
\node at (-0.9, 5.8) {$\gamma_2$}; \node at (-0.9, -5.8) {$\gamma_2$}; 

 \node[circle,inner sep=0pt,minimum size=3,fill=black] (1) at (-4.5, -2.5) {}; 
  \node[circle,inner sep=0pt,minimum size=3,fill=black] (1) at (-4.5, 2.5) {};
\draw [dashed] (-3,1) to (-4.5, 2.5); \draw [dashed, ->] (-4.5,2.5) to (-5,5.5);  
\draw [dashed] (-3,-1) to (-4.5, -2.5); \draw [dashed, >-] (-5,-5.5) to (-4.5,-2.5); 
\node at (-5.1, 5.8) {$\gamma_1$}; \node at (-5.1, -5.8) {$\gamma_1$}; 

 \node at (0,1.2) {$\Delta'< l(g_2)-l(g_1)$}; \draw [dotted, |-|] (-2.8,0.8) to (-1.3,2.3);
   
   \node at (-4.5, 0) {$l(g_1)=\Delta$}; \draw [dotted, |-|] (-3.3, 1) to (-3.3, -1);

 \node[circle,inner sep=0pt,minimum size=3,fill=black] (1) at (6,-1) {};
 \node[circle,inner sep=0pt,minimum size=3,fill=black] (1) at (6,1) {};
\draw [dashed] (6,-1) to (6,1); \node at (4.2, 1.1) {$(g_1g_2)^2q=q$};
\node at (7.5, -0.9) {$p=g_2g_1g_2q$};
 \node[circle,inner sep=0pt,minimum size=3,fill=black] (1) at (7, 2) {}; \node at (6.2, 2.1) {$g_1g_2q$};
  \node[circle,inner sep=0pt,minimum size=3,fill=black] (1) at (7.5, 2.5) {}; \node at (7.9, 2.5) {$r$};
 \node[circle,inner sep=0pt,minimum size=3,fill=black] (1) at (7,-2) {}; \node at (6.4, -2.1) {$g_2q$};
 \node[circle,inner sep=0pt,minimum size=3,fill=black] (1) at (7.5,-2.5) {};  
  \draw [dashed] (6,1) to (7.5, 2.5);    \draw [dashed] (6,-1) to (7.5, -2.5);
  \draw [dashed, >-] (8,5.5) to (7.5,2.5); \draw [dashed, ->] (7.5,-2.5) to (8,-5.5);
\node at (8.1, 5.8) {$\gamma_2$}; \node at (8.1, -5.8) {$\gamma_2$}; 

 \node[circle,inner sep=0pt,minimum size=3,fill=black] (1) at (4.5, -2.5) {}; 
  \node[circle,inner sep=0pt,minimum size=3,fill=black] (1) at (4.5, 2.5) {};
\draw [dashed] (6,1) to (4.5, 2.5); \draw [dashed, ->] (4.5,2.5) to (4,5.5);  
\draw [dashed] (6,-1) to (4.5, -2.5); \draw [dashed, >-] (4,-5.5) to (4.5,-2.5); 
\node at (3.9, 5.8) {$\gamma_1$}; \node at (3.9, -5.8) {$\gamma_1$}; 

 \node at (9,1.2) {$\Delta' \ge l(g_2)-l(g_1)$}; \draw [dotted, |-|] (6.2,0.8) to (7.7,2.3);
   
   \node at (4.5, 0) {$l(g_1)=\Delta$}; \draw [dotted, |-|] (5.7, 0.9) to (5.7, -1);

\end{tikzpicture}
\caption{$\gamma_1$ and $\gamma_2$ intersect with opposite orientations along a path of length $\Delta=\min\{l(g_1), l(g_2)\}=l(g_1)$. The axes $\gamma_2$ and $g_1\cdot \gamma_2$ intersect along a path $[q,r]$ of length $\Delta' \ge \frac{l(g_2)-l(g_1)}{2}.$} \label{bigint2}
\end{figure}

\end{document}